\documentclass[12pt]{amsart}       
\usepackage{txfonts}
\usepackage{extarrows}
\usepackage{amssymb}
\usepackage{eucal}
\usepackage{bbm}
\usepackage{graphicx}
\usepackage{amsmath}
\usepackage{amscd}
\usepackage[all]{xy}           
\usepackage{tikz}
\usepackage{amsfonts,latexsym}
\usepackage{xspace}
\usepackage{epsfig}
\usepackage{float}
\usepackage{color}
\usepackage{shuffle}
\usepackage{fancybox}
\usepackage{colordvi}
\usepackage{multicol}
\usepackage{mathrsfs}
\usepackage{ifpdf}
\ifpdf
  \usepackage[colorlinks,final,hyperindex,backref=page]{hyperref}
\else
  \usepackage[colorlinks,final,hyperindex,backref=page,hypertex]{hyperref}
\fi

\renewcommand*{\backref}[1]{}
\renewcommand*{\backrefalt}[4]{%
  \ifcase #1%
  \or \textcolor{red}{#2}%
  \else \textcolor{red}{#2}%
  \fi}

\newtheorem{theorem}{Theorem}[section]
\newtheorem{proposition}[theorem]{Proposition}
\newtheorem{lemma}[theorem]{Lemma}
\newtheorem{coro}[theorem]{Corollary}
\newtheorem{corollary}[theorem]{Corollary}
\newtheorem{prop-def}{Proposition-Definition}[section]
\newtheorem{coro-def}{Corollary-Definition}[section]

\theoremstyle{definition}
\newtheorem{definition}[theorem]{Definition}
\newtheorem{remark}[theorem]{Remark}

\newcommand{\nc}{\newcommand}
\nc{\tred}[1]{\textcolor{red}{#1}}
\nc{\tblue}[1]{\textcolor{blue}{#1}}
\nc{\tgreen}[1]{\textcolor{green}{#1}}
\nc{\tpurple}[1]{\textcolor{purple}{#1}}
\nc{\btred}[1]{\textcolor{red}{\bf #1}}
\nc{\btblue}[1]{\textcolor{blue}{\bf #1}}
\nc{\btgreen}[1]{\textcolor{green}{\bf #1}}
\nc{\btpurple}[1]{\textcolor{purple}{\bf #1}}
\nc{\NN}{{\mathbb N}}
\nc{\ncsha}{{\mbox{\cyr X}^{\mathrm NC}}} \nc{\ncshao}{{\mbox{\cyr
X}^{\mathrm NC}_0}}

\newcommand{\delete}[1]{}

\nc{\mlabel}[1]{\label{#1}}
\nc{\mcite}[1]{\cite{#1}}
\nc{\mref}[1]{\ref{#1}}
\nc{\meqref}[1]{\eqref{#1}}
\nc{\mbibitem}[1]{\bibitem{#1}}
\nc{\sha}{{\mbox{\cyr X}}}  
\newfont{\scyr}{wncyr10 scaled 550}
\nc{\ssha}{\mbox{\bf \scyr X}}
\nc{\shap}{{\mbox{\cyrs X}}} 
\nc{\shpr}{\diamond}    
\nc{\shp}{\ast} \nc{\shplus}{\shpr^+}
\nc{\shprc}{\shpr_c}    
\nc{\dep}{\mrm{dep}} \nc{\lc}{\lfloor} \nc{\rc}{\rfloor}
\nc{\db}{\leq_{\rm db}} \nc{\bfk}{{\bf k}}

\nc{\cala}{{\mathcal A}} \nc{\calb}{{\mathcal B}}
\nc{\calc}{{\mathcal C}}
\nc{\cald}{{\mathcal D}} \nc{\cale}{{\mathcal E}}
\nc{\calf}{{\mathcal F}} \nc{\calg}{{\mathcal G}}
\nc{\calh}{{\mathcal H}} \nc{\cali}{{\mathcal I}}
\nc{\call}{{\mathcal L}} \nc{\calm}{{\mathcal M}}
\nc{\caln}{{\mathcal N}} \nc{\calo}{{\mathcal O}}
\nc{\calp}{{\mathcal P}} \nc{\calr}{{\mathcal R}}
\nc{\cals}{{\mathcal S}} \nc{\calt}{{\mathcal T}}
\nc{\calu}{{\mathcal U}} \nc{\calw}{{\mathcal W}} \nc{\calk}{{\mathcal K}}
\nc{\calx}{{\mathcal X}} \nc{\CA}{\mathcal{A}}

\nc{\fraka}{{\mathfrak a}} \nc{\frakA}{{\mathfrak A}}
\nc{\frakb}{{\mathfrak b}} \nc{\frakB}{{\mathfrak B}}
\nc{\frakc}{{\mathfrak c}}
\nc{\frakD}{{\mathfrak D}} \nc{\frakF}{\mathfrak{F}}
\nc{\frakf}{{\mathfrak f}} \nc{\frakg}{{\mathfrak g}}
\nc{\frakH}{{\mathfrak H}} \nc{\frakL}{{\mathfrak L}}
\nc{\frakM}{{\mathfrak M}} \nc{\bfrakM}{\overline{\frakM}}
\nc{\frakm}{{\mathfrak m}} \nc{\frakP}{{\mathfrak P}}
\nc{\frakN}{{\mathfrak N}} \nc{\frakp}{{\mathfrak p}}
\nc{\frakS}{{\mathfrak S}} \nc{\frakT}{\mathfrak{T}}
\nc{\frakX}{{\mathfrak X}}

\font\cyr=wncyr10 \font\cyrs=wncyr7
\nc{\li}[1]{\textcolor{blue}{Nan:#1}}
\nc{\lir}[1]{\textcolor{red}{Li:#1}}
\nc{\yi}[1]{\textcolor{blue}{Yi: #1}}
\nc{\xing}[1]{\textcolor{purple}{Xing:#1}}
\nc{\revise}[1]{\textcolor{red}{#1}}
\nc{\nan}[1]{\textcolor{blue}{Nan:#1}}

\numberwithin{equation}{section}
\nc{\RR}{\mathbb{R}}
\nc{\X}{{\bf X}}
\nc{\E}{{\bf E}}
\nc{\x}{\mathbb{X}}
\nc{\C}{\mathcal{C}^{\alpha}}
\nc{\D}{\mathcal{D}^{\alpha}}
\nc{\CC}{\mathcal{C}_{\X}^{\alpha}}
\nc{\f}{\varphi}
\nc{\al}{\alpha}
\nc{\lbar}{\overline}
\nc{\HA}{\mathbb{S}}
\nc{\ha}{\mathcal{S}}

\nc{\V}{V} \nc{\pro}{\otimes}
\nc{\tng}{T^{\le N}(V)^{g}} \nc{\tn}{T^{\le N}(V)}
\nc{\ttg}{T^{\le 3}(V)^{g}}
\nc{\ZZ}{\mathbb{Z}} \nc{\etree}{1}
\nc{\xx}{\mathcal{X}}
\nc{\RP}{{\mathcal{D}}^{\alpha}([0, T]^2, V)}
\nc{\Y}{{\bf Y}}\nc{\id}{\text{id}} \nc{\Id}{\text{Id}}\nc{\Z}{{\bf Z}}
\nc{\sym}{\operatorname{Sym}} \nc{\tri}{\operatorname{Sym}}
\providecommand{\hC}{\hat C}
\providecommand{\hdd}{\hat\delta}
\providecommand{\R}{\mathbb R}
\providecommand{\T}{\mathbb T}
\providecommand{\norm}[1]{\left\|#1\right\|}

\renewcommand{\D}{\mathcal D}
\renewcommand{\Z}{\mathcal Z}

\begin{document}

\title[A low-regularity semigroup Sewing lemma]{A Low-Regularity Semigroup Sewing Lemma via Quotient Structures}
%
%
\author{Nannan Li}
\address{School of Mathematics and Statistics, Lanzhou University
Lanzhou, 730000, China
}
\email{linn2024@lzu.edu.cn}

\author{Xing Gao$^{*}$}\thanks{*Corresponding author}
\address{School of Mathematics and Statistics, Lanzhou University
Lanzhou, 730000, China; Gansu Provincial Research Center for Basic Disciplines of Mathematics
and Statistics, Lanzhou, 730070, China
}
\email{gaoxing@lzu.edu.cn}
\begin{abstract}
We develop a low-regularity Sewing theory for the semigroup coboundary $\hat\delta=\delta-a$ associated with a strongly continuous semigroup $S$. Unlike the ordinary low-regularity Sewing problem, the semigroup setting has an intrinsic algebraic non-uniqueness below the threshold $1$, in the sense that solutions are canonical only modulo semigroup cocycles.  Accordingly, the natural target is a quotient space rather than an increment space.

We identify this quotient structure and construct the corresponding semigroup Sewing map.  The construction uses a frozen terminal-time transform, which rewrites semigroup defects, for each terminal time, as ordinary low-regularity Sewing problems on a frozen simplex.  This reduction, however, does not by itself produce a genuine semigroup increment; the main additional step is to prove that the frozen solution classes are compatible as the terminal time varies and hence assemble into a canonical quotient class for $\hat\delta$.  This yields canonical classes for $0<\gamma<1$, and at $\gamma=1$ under logarithmic control. We further provide a scale-dependent criterion for selecting genuine representatives, verified for heat semigroups on Sobolev scales through a parabolic Littlewood--Paley tail condition.
\end{abstract}

\makeatletter
\@namedef{subjclassname@2020}{\textup{2020} Mathematics Subject Classification}
\makeatother
\subjclass[2020]{60L20, 60H99, 60L90}

\keywords{Low-regularity Sewing; semigroup Sewing lemma; quotient spaces; analytic semigroups}

\maketitle

\tableofcontents

\setcounter{section}{0}

\allowdisplaybreaks

\section{Introduction}

\subsection{Motivation}
The Sewing lemma is a central analytic mechanism in rough path theory.  Starting from a local two-increment $A_{ts}$, it reconstructs an additive increment by controlling the three-point defect
\[
(\delta A)_{tus}=A_{ts}-A_{tu}-A_{us}.
\]
In the classical regime, the decisive assumption is that $\delta A$ has order strictly larger than $1$.  This is the analytic reason why Riemann sums converge and why the resulting correction is unique.  This mechanism underlies Young integration~\cite{Young36}, Lyons' rough path theory~\cite{Lyons98,LyonsQian02}, Gubinelli's controlled rough paths~\cite{Gub04}, the enriched-path formulation of Feyel and de La Pradelle~\cite{FePr06}, and the modern accounts \cite{FH20, FV10}.

The threshold $1$ is not merely technical.  Above it, the relevant regularity condition rules out non-trivial coboundary cocycles, which is the source of uniqueness.  Below it, such cocycles are no longer excluded by regularity alone. Broux and Zambotti showed that one can still carry out a low-regularity sewing construction in the regime $0<\gamma\le 1,$ but the resulting representative is neither unique nor canonical, and the endpoint $\gamma=1$ carries a logarithmic loss~\cite{BZ22}.  Thus the correct low-regularity philosophy is not the absence of sewing, but the loss of uniqueness at the level of actual representatives.

This paper studies the corresponding question in the semigroup setting.  Let $S=(S_t)_{t\ge0}$ be a semigroup.  For mild evolution equations, the relevant
increment relation is not ordinary additivity:
\[
 R_{ts}=R_{tu}+R_{us},
\]
but semigroup additivity:
\[
R_{ts}=R_{tu}+S_{t-u}R_{us}.
\]
Equivalently, one replaces the usual coboundary $\delta$ by the semigroup coboundary
\[
\hat\delta=\delta-a, \quad a_{ts}=S_{t-s}-\mathrm{Id}.
\]
This modified complex, together with the semigroup Sewing lemma in the regular regime $\gamma>1$, was introduced in the rough evolution equation framework of Gubinelli and Tindel~\cite{GT10}.  It is particularly well adapted to mild formulations of evolution equations and to convolution integrals.  Its analytic role is closely related to classical semigroup estimates for parabolic equations \cite{Henry81,Pazy83} and to stochastic convolution theory \cite{DPZ92}.

The main difficulty addressed here is the following. In the low-regularity regime $0<\gamma\le1$, one cannot expect a canonical semigroup increment $R$ solving
\[
\hat\delta R=H.
\]
Indeed, if $K$ is a semigroup cocycle, namely $\hat\delta K=0$, then $R+K$ solves the same equation. Thus the construction has to distinguish the existence of representatives, their non-uniqueness, and the canonical quotient class modulo semigroup cocycles.

\subsection{Main ideas and main results}
The first idea is to reduce the semigroup equation to ordinary low-regularity sewing on frozen simplices.  For a fixed terminal time $\tau$, the frozen transform $\Theta_\tau$ transports increments to the terminal time and satisfies the intertwining identity
\[
\delta(\Theta_\tau g)=\Theta_\tau(\hat\delta g).
\]
Thus a semigroup defect becomes an ordinary defect on the frozen simplex.  This allows us to use the Broux--Zambotti sewing mechanism at each frozen terminal time.

The second idea is that frozen representatives need not be strictly compatible when \(\tau\) varies.  What is compatible is not the representatives themselves, but their quotient classes.  On frozen simplices the defect is absorbed by ordinary cocycles, and after reassembly it corresponds to the ambiguity by semigroup cocycles.  This leads to compatible frozen quotient families and gives an unconditional quotient-level semigroup sewing construction.

The third idea is to go beyond the quotient object and construct genuine semigroup representatives on admissible domains.  This requires an additional selection mechanism.  Instead of imposing an artificial global splitting condition on the whole Banach space, we impose a scale-dependent splitting only on the residuals generated by the dyadic construction.  This distinction is essential for parabolic semigroups. Heat semigroups are smoothing and generally noninvertible, so the right mechanism is not global invertibility but low-frequency approximate inversion combined with high-frequency smoothing tails.  This is verified in the Sobolev heat setting by using Littlewood--Paley estimates~\cite{BCD11}.

The main results of the paper are organized according to the three-layer structure of low-regularity Sewing.

\begin{enumerate}
\item We develop the semigroup cochain complex associated with $\hat\delta$. The semigroup cochain complex is established through Lemmas~\ref{prop:hdelta-square-zero} and~\ref{prop:exactness}. The obstruction to uniqueness is isolated in Propositions~\ref{prop:quotient-target} and~\ref{prop:trivial-cocycles-above-one}, leading to the canonical target principle in Corollary~\ref{cor:canonical-target}.

\item We introduce the frozen terminal-time transform and prove that it reduces semigroup defects to ordinary low-regularity sewing problems on frozen simplices.  The key intertwining identity is Proposition~\ref{prop:frozen-intertwining}, and the equivalence between genuine semigroup increments and strictly compatible frozen increment families is Theorem~\ref{thm:frozen-equivalence}. The frozen dyadic construction is carried out in Proposition~\ref{thm:frozen-dyadic-sewing} and applied to semigroup defects in Theorem~\ref{prop:dyadic-frozen-defects}.

\item We construct the quotient-level semigroup sewing maps in the low-regularity regime.  The subcritical construction \(0<\gamma<1\) is Theorem~\ref{thm:quotient-semigroup-sewing-subcritical}, and the critical logarithmic construction at \(\gamma=1\) is Theorem~\ref{thm:critical-quotient-semigroup-sewing}.  These results produce canonical classes modulo semigroup cocycles, rather than canonical genuine increments.

\item We develop a mechanism for selecting genuine representatives from the canonical quotient class and use it to obtain concrete semigroup increments. The subcritical and critical selection results are Theorems~\ref{thm:subcritical-dyadic-sewing} and~\ref{thm:critical-dyadic-sewing}.  For heat semigroups on Sobolev scales, the admissibility of this selection mechanism is verified through Proposition~\ref{prop:heat-spectral-splitting-estimate}, which leads to the genuine heat-sewing result in Theorem~\ref{thm:heat-sewing-sobolev}.  With these representatives in hand, we construct normalized semigroup increments in Theorem~\ref{thm:normalized-convolution-primitive-revised} and derive the pathwise stochastic estimates in Theorem~\ref{thm:stochastic-sewing-estimates}.
\end{enumerate}

\raggedbottom
\subsection{Relation to existing Sewing lemmas}
The paper extends the low-regularity Broux--Zambotti viewpoint to the semigroup coboundary, but the extension is not formal. In the classical sewing case $\gamma>1$, the sewing correction is unique~\cite{Gub04}. In the low-regularity ordinary case $0<\gamma\le1$, Broux and Zambotti construct continuous representatives, but these representatives are not canonical~\cite{BZ22}. In the semigroup case, for $\gamma>1$, the semigroup Sewing lemma also yields a unique correction~\cite{GT10}.  For $0<\gamma\le 1$, the same non-uniqueness phenomenon appears with $\delta$ replaced by $\hat\delta$, and an additional compatibility problem has to be solved as the terminal time varies.  The representatives depend on choices, while the quotient class modulo semigroup cocycles is canonical. In the degenerate case \(S_t=\mathrm{Id}\), the modified coboundary reduces to the ordinary coboundary, and the construction recovers the normalized ordinary low-regularity sewing construction of Broux--Zambotti~\cite{BZ22}.

This point also explains the role of the frozen quotient construction.  The frozen construction is unconditional and canonical at the quotient level.  Passing from this class to an actual representative is a stronger problem and necessarily requires a selection mechanism. In parabolic examples, such a selection cannot be based on a global inverse of the semigroup.  It is instead obtained through frequency localization and smoothing tails, in a way consistent with the analytic structure of rough
evolution equations~\cite{GT10}.

\subsection{Organization of the paper}
The paper is organized as follows.  Section~\ref{sec:framework} introduces the semigroup cochain complex, the Banach-scale setting, and the quotient spaces
used throughout the paper.  Section~\ref{sec:frozen-reduction} develops the frozen terminal-time reduction, which relates the semigroup coboundary $\hat\delta $ to the ordinary coboundary \(\delta\) on frozen simplices. Section~\ref{sec:dyadic-frozen} carries out the ordinary low-regularity sewing construction on each frozen simplex.  Section~\ref{sec:quotient-semigroup-sewing} assembles the frozen quotient classes into the quotient-level semigroup sewing maps. Section~\ref{sec:representative-selection-sewing} develops the representative-selection mechanism and constructs genuine semigroup representatives on admissible domains.  Section~\ref{sec:heat-verification} verifies this mechanism for heat semigroups on Sobolev scales.  Finally, Sections~\ref{sec:convolution-primitives} and~\ref{sec:stochastic-consequences} discuss normalized semigroup increments and pathwise stochastic consequences.

\section{Algebraic and analytic framework}\label{sec:framework}
This section fixes the semigroup cochain complex and the Banach-scale setting for the low-regularity Sewing problem.  Below the threshold $1$, uniqueness fails
in genuine increment spaces, and the natural target is a quotient by semigroup cocycles.

Following the framework established in~\cite[Section 3.1]{GT10}, we fix a time $T>0$ and consider a Banach scale $(B_{\alpha})_{\alpha\in\mathbb{R}}$. We recall that a family of Banach spaces $(B_{\alpha})_{\alpha\in\mathbb{R}}$ is a Banach scale if for every $\alpha \le \beta$, the embedding $B_{\beta} \hookrightarrow B_{\alpha}$ is continuous. Let $S=(S_{t})_{t\ge0}$ be an analytic semigroup acting on this scale. Then $S$ satisfies the following standard smoothing and continuity estimates: for every $\alpha \in \mathbb{R}$ and $\varepsilon \in (0,1)$, there exists a constant $C_{\alpha,\varepsilon,T} > 0$ such that for all $t \in (0, T]$,
\[
\|S_t x\|_{B_{\alpha+\varepsilon}}
\le C_{\alpha,\varepsilon,T}\, t^{-\varepsilon}\|x\|_{B_\alpha},
\qquad
\|(S_t-\mathrm{Id})x\|_{B_\alpha}
\le C_{\alpha,\varepsilon,T}\, t^\varepsilon \|x\|_{B_{\alpha+\varepsilon}}.
\]

Having established the analytical properties of the operator semigroup, we now develop a corresponding algebraic framework to precisely characterize the mild increments arising in evolution equations.
For $n \ge 1$, we denote the $n$-dimensional simplex by
\[ \Delta_T^n := \{ (t_1, \dots, t_n) \in [0, T]^n : 0 \le t_n \le \dots \le t_1 \le T \}. \]
Let $ C(\Delta_T^n; B_\alpha)$ be the space of continuous maps from $\Delta_T^n$ to $B_\alpha$. We denote the standard coboundary operator by 
$\delta: C(\Delta_T^n; B_\alpha) \to C(\Delta_T^{n+1}; B_\alpha),$
defined for $g \in C(\Delta_T^n; B_\alpha)$ as
\begin{equation} \label{eq:delta-def}
(\delta g)_{t_1 \dots t_{n+1}} := \sum_{i=1}^{n+1} (-1)^i g_{t_1 \dots \hat{t}_i \dots t_{n+1}},
\end{equation}
where $\hat{t}_i$ means that this particular argument is omitted.

While the standard coboundary $\delta$ is suitable for characterizing ordinary increments, it does not account for the intrinsic evolution generated by the semigroup. To reconcile this algebraic structure with the mild formulation of evolution equations, we follow Gubinelli \& Tindel~\cite{GT10} and introduce a semigroup coboundary operator that incorporates the semigroup action.  Let 
\begin{equation}
\label{eq:a-def}
a_{ts}:=S_{t-s}-\mathrm{Id},
\qquad 0\le s\le t\le T.
\end{equation}
The operator $\hat{\delta}: C(\Delta_T^n; B_\alpha) \to C(\Delta_T^{n+1}; B_\alpha)$
is defined as
\begin{equation} \label{eq:delta_hat_def}
\hat{\delta} := \delta - a,
\end{equation}
where $a$ acts via left multiplication on the first two time variables
\[ (ag)_{t_1 \dots t_{n+1}} := a_{t_1 t_2} g_{t_2 \dots t_{n+1}}. \]
In particular, for $g \in C(\Delta_T; B_\alpha)$ and $h \in C(\Delta_T^2; B_\alpha)$, the operator $\hat{\delta}$ takes the following explicit forms for $0 \le s \le u \le t \le T$,
\begin{align}
 (\hat{\delta}g)_{ts} = g_t - S_{t-s}g_s,\quad  (\hat{\delta}h)_{tus} = h_{ts} - h_{tu} - S_{t-u}h_{us}. \label{eq:delta_hat_2}
\end{align}
We set
\[
Z\hat C(\Delta_T^n; B_\alpha):=\ker(\hat\delta)\cap C(\Delta_T^n; B_\alpha),
\qquad
\hat B(\Delta_T^n; B_\alpha):=\operatorname{Im}(\hat\delta)\cap C(\Delta_T^n; B_\alpha).
\]

With the semigroup coboundary $\hat{\delta}$ and the operator-valued increment $a$ formally defined, we now establish their fundamental algebraic properties. The following results demonstrate that this modified framework forms a well defined cochain complex, providing the necessary structural foundation for the subsequent Sewing lemma.
\begin{lemma}
\label{lem:delta-a}
The operator-valued increment $a$ from \eqref{eq:a-def} satisfies
\begin{equation*}
(\delta a)_{tus}=a_{tu}a_{us},
\qquad 0\le s\le u\le t\le T.
\end{equation*}
\end{lemma}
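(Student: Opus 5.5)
Since $a$ is a two-parameter (operator-valued) function, the coboundary in \eqref{eq:delta-def} with $n=2$ reads $(\delta a)_{tus}=-a_{us}+a_{ts}-a_{tu}$. The sign convention should be read off so that $(\delta a)_{tus}=a_{ts}-a_{tu}-a_{us}$, matching the convention used in the introduction and in \eqref{eq:delta_hat_2}. I will use this form directly.

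The proof is then a direct algebraic computation, and I plan to evaluate both sides explicitly. Substituting \eqref{eq:a-def}, the left-hand side is
\[
(S_{t-s}-\mathrm{Id})-(S_{t-u}-\mathrm{Id})-(S_{u-s}-\mathrm{Id})
=S_{t-s}-S_{t-u}-S_{u-s}+\mathrm{Id}.
\]
For the right-hand side I will expand the product of operators:
\[
a_{tu}a_{us}=(S_{t-u}-\mathrm{Id})(S_{u-s}-\mathrm{Id})
=S_{t-u}S_{u-s}-S_{t-u}-S_{u-s}+\mathrm{Id}.
\]
The semigroup property $S_{t-u}S_{u-s}=S_{(t-u)+(u-s)}=S_{t-s}$ holds because $t-u\ge0$ and $u-s\ge0$. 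With it, the two expressions coincide.

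There is no real obstacle here. The only points needing care are the sign convention in $\delta$ and the fact that composition of the bounded operators is well defined on each $B_\alpha$, which follows from the semigroup acting on the scale. Continuity in the time variables is not needed for this pointwise identity.
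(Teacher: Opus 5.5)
Your proof is correct and is the paper's argument: expand $a_{ts}-a_{tu}-a_{us}$ via \eqref{eq:a-def} and use $S_{t-s}=S_{t-u}S_{u-s}$ to identify the result with $(S_{t-u}-\mathrm{Id})(S_{u-s}-\mathrm{Id})$. The sign convention needs no adjustment, because $-a_{us}+a_{ts}-a_{tu}$ from \eqref{eq:delta-def} is already the same expression as $a_{ts}-a_{tu}-a_{us}$.
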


\begin{proof}
By the semigroup property,
$S_{t-s}=S_{t-u}S_{u-s}$.
Hence
\begin{align*}
(\delta a)_{tus}=&\ a_{ts}-a_{tu}-a_{us} \\
=&\ S_{t-s}-S_{t-u}-S_{u-s}+\mathrm{Id} \\
=&\ (S_{t-u}-\mathrm{Id})(S_{u-s}-\mathrm{Id}) \\
=&\ a_{tu}a_{us}.
\end{align*}
This completes the proof.
\end{proof}

\begin{lemma}
\label{prop:hdelta-square-zero}
For every $n\ge 1$, the operator $\hat\delta$ from \eqref{eq:delta_hat_def} satisfies
$\hat\delta^2=0$ on $C(\Delta_T^n; B_\alpha)$.
\end{lemma}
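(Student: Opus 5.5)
We expand $\hat\delta^2=(\delta-a)(\delta-a)$ and check that the terms cancel, using $\delta^2=0$ together with Lemma~\ref{lem:delta-a}. Concretely,
\[
\hat\delta^2 g=\delta^2 g-\delta(ag)-a(\delta g)+a(ag).
\]
The first term vanishes by the standard simplicial identity $\delta^2=0$ for the ordinary coboundary \eqref{eq:delta-def}. This is a routine check: each face of codimension two appears twice with opposite signs. So it suffices to prove the identity
\[
\delta(ag)=a(ag)-a(\delta g),
\]
evaluated at a point $(t_1,\dots,t_{n+2})\in\Delta_T^{n+2}$.

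To do this, we write out $\delta(ag)_{t_1\dots t_{n+2}}=\sum_{i=1}^{n+2}(-1)^i(ag)_{t_1\dots\hat t_i\dots t_{n+2}}$ and split the sum into three groups.

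For $i\ge3$, the first two times $t_1,t_2$ survive. The term is then $a_{t_1t_2}\,g_{t_2\dots\hat t_i\dots t_{n+2}}$, and these terms collect into $-a_{t_1t_2}\bigl[(\delta g)_{t_2\dots t_{n+2}}+g_{t_4\dots}\!-\!\cdots\bigr]$. More precisely, we compare them with $a_{t_1t_2}(\delta g)_{t_2\dots t_{n+2}}$, whose faces omit $t_2,\dots,t_{n+2}$ with signs shifted by one.

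The terms $i=1$ and $i=2$ give $-a_{t_2t_3}g_{t_3\dots}+a_{t_1t_3}g_{t_3\dots}$.

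Reindexing, the $i\ge3$ terms equal $-a_{t_1t_2}(\delta g)_{t_2\dots t_{n+2}}$ minus the single term where $t_2$ is omitted. That term is $-(-1)^1a_{t_1t_2}g_{t_3\dots}$, so the $i\ge3$ terms give $-(a\delta g)-a_{t_1t_2}g_{t_3\dots t_{n+2}}$.

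Summing the three groups produces
\[
(a_{t_1t_3}-a_{t_1t_2}-a_{t_2t_3})g_{t_3\dots t_{n+2}}-(a\delta g)_{t_1\dots t_{n+2}}.
\]
By Lemma~\ref{lem:delta-a}, the bracket equals $(\delta a)_{t_1t_2t_3}=a_{t_1t_2}a_{t_2t_3}$. The product $a_{t_1t_2}a_{t_2t_3}g_{t_3\dots t_{n+2}}$ is exactly $a(ag)$, which gives the identity above.

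The only delicate point is the sign and index bookkeeping in the reindexing step. Once the faces are grouped by whether they omit $t_1$, $t_2$, or a later time, the cancellation reduces to Lemma~\ref{lem:delta-a}.

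As a sanity check, in the case $n=1$ the computation can be done directly from \eqref{eq:delta_hat_2}:
\[
(\hat\delta\hat\delta g)_{tus}=(g_t-S_{t-s}g_s)-(g_t-S_{t-u}g_u)-S_{t-u}(g_u-S_{u-s}g_s)=0,
\]
by the semigroup property.
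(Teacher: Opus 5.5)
Your proof is correct and follows the same route as the paper. You expand $\hat\delta^2 g$, discard $\delta^2 g$, and reduce to the Leibniz-type identity $\delta(ag)=(\delta a)g-a\,\delta g$ combined with Lemma~\ref{lem:delta-a}; the paper only asserts that identity as ``a direct computation,'' whereas you carry out the face-by-face bookkeeping explicitly. Your signs check out, except that the first bracket you write for the $i\ge 3$ terms is garbled; your ``more precisely'' version correctly replaces it.
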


\begin{proof}
Let $g\in C(\Delta_T^n; B_\alpha)$. Since $\delta^2=0$, we have
\[
\hat\delta^2 g
=
(\delta-a)(\delta g-ag)
=
\delta^2g-\delta(ag)-a\delta g+aag
=
-\delta(ag)-a\delta g+aag.
\]
By a direct computation from \eqref{eq:delta-def},
$\delta(ag)=(\delta a)g-a\delta g$.
Therefore,
\[
\hat\delta^2 g
=
-(\delta a)g+a\delta g-a\delta g+aag
=
\bigl(aa-\delta a\bigr)g.
\]
Lemma \ref{lem:delta-a} yields $\delta a=aa$, hence $\hat\delta^2=0$.
\end{proof}

We now record the basic cochain property of the modified coboundary.

\begin{lemma}\label{prop:exactness}
For every $n\ge 1$,
\begin{equation}
\label{eq:exactness}
Z\hat C(\Delta_T^{n+1}; B_\alpha)=\hat B(\Delta_T^{n+1}; B_\alpha).
\end{equation}
\end{lemma}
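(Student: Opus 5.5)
The plan is to prove the two inclusions separately. The inclusion $\hat B(\Delta_T^{n+1};B_\alpha)\subset Z\hat C(\Delta_T^{n+1};B_\alpha)$ follows at once from Lemma~\ref{prop:hdelta-square-zero}: if $h=\hat\delta g$ with $g\in C(\Delta_T^n;B_\alpha)$, then $\hat\delta h=\hat\delta^2 g=0$. So the content lies in the reverse inclusion, for which I will write down an explicit primitive using the ``cone with apex at $0$'' construction, as in the ordinary Gubinelli--Tindel complex. Only the algebraic identity is claimed here; no H\"older-type regularity is asserted for the primitive.

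Let $h\in C(\Delta_T^{n+1};B_\alpha)$ with $\hat\delta h=0$. For $(t_1,\dots,t_n)\in\Delta_T^n$ we have $0\le t_n$, so $(t_1,\dots,t_n,0)\in\Delta_T^{n+1}$. This lets me define
\[
g_{t_1\dots t_n}:=h_{t_1\dots t_n 0}.
\]
Then $g$ is continuous as the composition of $h$ with a continuous embedding of simplices, so $g\in C(\Delta_T^n;B_\alpha)$.

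Next I evaluate the cocycle identity $(\hat\delta h)_{t_1\dots t_{n+1}0}=0$ using \eqref{eq:delta-def} and the definition of $a$. In the alternating sum over the $n+2$ omitted indices, I separate the last term (omitting the final variable $0$) from the rest. The last term is $(-1)^{n+2}h_{t_1\dots t_{n+1}}=(-1)^n h_{t_1\dots t_{n+1}}$. Each of the other terms has the form $(-1)^i h_{t_1\dots\hat t_i\dots t_{n+1}0}=(-1)^i g_{t_1\dots\hat t_i\dots t_{n+1}}$ for $1\le i\le n+1$. For the $a$-term, since $a$ acts only on the first two time variables and $n\ge1$, we get $a_{t_1t_2}h_{t_2\dots t_{n+1}0}=a_{t_1t_2}g_{t_2\dots t_{n+1}}=(ag)_{t_1\dots t_{n+1}}$. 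Combining,
\[
0=(\hat\delta h)_{t_1\dots t_{n+1}0}=(\hat\delta g)_{t_1\dots t_{n+1}}+(-1)^n h_{t_1\dots t_{n+1}},
\]
hence $h=\hat\delta\bigl((-1)^{n+1}g\bigr)\in\hat B(\Delta_T^{n+1};B_\alpha)$. As a sanity check, for $n=1$ this reads $h_{ts}=h_{t0}-S_{t-s}h_{s0}$, which is exactly the identity $(\hat\delta h)_{ts0}=0$ from \eqref{eq:delta_hat_2}.

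The only delicate point is the index and sign bookkeeping. Specifically, I must check that the coboundary sum on $n+2$ points, with the final slot omitted, reproduces the coboundary sum on $n+1$ points with the same signs $(-1)^i$. I must also check that the $a$-term does not interact with the appended variable $0$, which holds because $a$ only involves $t_1,t_2$ and $n+1\ge2$. Both will be verified term by term from \eqref{eq:delta-def}.
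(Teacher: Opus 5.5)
Your proposal is correct and follows the paper's proof essentially verbatim. The easy inclusion comes from $\hat\delta^2=0$ (Lemma~\ref{prop:hdelta-square-zero}). For the converse, both you and the paper take the cone primitive $(-1)^{n+1}h_{t_1\cdots t_n0}$, evaluate $\hat\delta h=0$ at $(t_1,\ldots,t_{n+1},0)$, and isolate the term omitting the final slot; the only cosmetic difference is that the paper builds the sign $(-1)^{n+1}$ into the primitive from the start, whereas you attach it at the end.
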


\begin{proof}
The inclusion $\hat B(\Delta_T^{n+1}; B_\alpha)\subseteq Z\hat C(\Delta_T^{n+1}; B_\alpha)$ follows from
Lemma \ref{prop:hdelta-square-zero}. Let us prove the converse inclusion.
Take $h\in Z\hat C(\Delta_T^{n+1}; B_\alpha)$. Define
\[
f_{t_1\cdots t_n}:=(-1)^{n+1}h_{t_1\cdots t_n 0},
\quad  \forall(t_1,\ldots,t_n)\in \Delta_T^n.
\]
By \eqref{eq:delta-def} and \eqref{eq:delta_hat_def},
\begin{equation}
\label{eq:1}
(\hat\delta f)_{t_1\cdots t_{n+1}}
=
(-1)^{n+1}
\left(
\sum_{i=1}^{n+1}(-1)^i h_{t_1\cdots \widehat{t_i}\cdots t_{n+1}0}
-a_{t_1t_2}h_{t_2\cdots t_{n+1}0}
\right).
\end{equation}
Since $\hat\delta h=0$, evaluated at $(t_1,\ldots,t_{n+1},0)$, we have
\[
0
=
(\hat\delta h)_{t_1\cdots t_{n+1}0}
=
\sum_{i=1}^{n+1}(-1)^i h_{t_1\cdots \widehat{t_i}\cdots t_{n+1}0}+(-1)^{n+2} h_{t_1\cdots t_{n+1}}
-a_{t_1t_2}h_{t_2\cdots t_{n+1}0}.
\]
Separating the last term in the sum gives
\[
\sum_{i=1}^{n+1}(-1)^i h_{t_1\cdots \widehat{t_i}\cdots t_{n+1}0}
-a_{t_1t_2}h_{t_2\cdots t_{n+1}0}
=
(-1)^{n+1} h_{t_1\cdots t_{n+1}},
\]
and inserting this identity into \eqref{eq:1} yields
$(\hat\delta f)_{t_1\cdots t_{n+1}}=h_{t_1\cdots t_{n+1}}$.
Hence $h\in \hat B(\Delta_T^{n+1}; B_\alpha)$, proving \eqref{eq:exactness}.
\end{proof}

We measure the size of these increments by H\"older norms defined in the following way. Let $\mu>0$ and $\alpha\in\mathbb{R}$. For $g\in  C(\Delta_T^2; B_\alpha)$, define
\begin{equation}
\label{eq:c2-holder}
\|g\|_{\mu,\alpha}
:=
\sup_{0\le s<t\le T}
\frac{\|g_{ts}\|_{B_\alpha}}{|t-s|^\mu}.
\end{equation}
Denote by $C^\mu(\Delta_T^2; B_\alpha)$ the set of all $g\in C(\Delta_T^2; B_\alpha)$ such that
$\|g\|_{\mu,\alpha}<\infty$.
In the same way, for $h\in C(\Delta_T^3; B_\alpha)$ and $0<\rho<\mu$, define
\[
\|h\|_{\rho,\mu-\rho;\alpha}
:=
\sup_{0\le s<u<t\le T}
\frac{\|h_{tus}\|_{B_\alpha}}{|t-u|^\rho |u-s|^{\mu-\rho}}.
\]
Then set
\begin{equation}
\label{eq:c3-holder}
\|h\|_{\mu,\alpha}
:=
\inf
\left\{
\sum_{j=1}^m \|h_j\|_{\rho_j,\mu-\rho_j;\alpha}
\,\,\Big|\,\,
h=\sum_{j=1}^m h_j,\
m\in\mathbb{N},\
0<\rho_j<\mu
\right\}.
\end{equation}
Let $C^{\mu}(\Delta_T^3; B_\alpha)$ be the set of all $h\in C(\Delta_T^3; B_\alpha)$ such that
$\|h\|_{\mu,\alpha}<\infty$.
For $g\in C(\Delta_T^2; B_\alpha)$, define the critical logarithmic seminorm
\[
\|g\|_{1,\log;\alpha}
:=
\sup_{0\le s<t\le T}
\frac{\|g_{ts}\|_{B_\alpha}}{|t-s|\bigl(1+|\log|t-s|\,|\bigr)}.
\]
Let $C_{\log}^{1}(\Delta_T^2; B_\alpha)$ denote the set of all $g$ such that
$\|g\|_{1,\log;\alpha}<\infty$.
Now, we introduce a slight extension of the spaces defined above.  Let $\gamma\in(0,1]$ and $\alpha\in\mathbb{R}$.
If $\gamma\in(0,1)$, define
\begin{equation}
\label{eq:low-target}
E^{\gamma}(\Delta_T^2; B_\alpha)
:=
\bigcap_{0<\varepsilon<\gamma}C^{\gamma-\varepsilon}(\Delta_T^2; B_{\alpha+\varepsilon}).
\end{equation}
If $\gamma=1$, define
\begin{equation}
\label{eq:critical-target}
E_{\log}^{1}(\Delta_T^2; B_\alpha):=C_{\log}^{1}(\Delta_T^2; B_\alpha)\cap\bigcap_{0<\varepsilon<1}C^{1-\varepsilon}(\Delta_T^2; B_{\alpha+\varepsilon}).
\end{equation}
Define the cocycle classes
\[
\mathcal Z^{\gamma}(\Delta_T^2; B_\alpha)
:=
E^{\gamma}(\Delta_T^2; B_\alpha)\cap Z\hat C(\Delta_T^2; B_\alpha), \quad \mathcal Z_{\log}^{1}(\Delta_T^2; B_\alpha)
:=
E_{\log}^{1}(\Delta_T^2; B_\alpha)\cap Z\hat C(\Delta_T^2; B_\alpha),
\]
where $0<\gamma<1$. When an actual semigroup increment exists in the above regularity spaces, its canonical information is its class modulo semigroup cocycles.  We therefore define the corresponding quotient target spaces by
\begin{equation}
\label{eq:quotient-targets}
\mathfrak E_2^{\gamma}(\Delta_T^2; B_\alpha)
:=
E^{\gamma}(\Delta_T^2; B_\alpha)/\mathcal Z^{\gamma}(\Delta_T^2; B_\alpha),
\qquad
\mathfrak E_{\log}^{1}(\Delta_T^2; B_\alpha)
:=
E_{\log}^{1}(\Delta_T^2; B_\alpha)/\mathcal Z_{\log}^{1}(\Delta_T^2; B_\alpha).
\end{equation}
Finally, for later use, set
\[
Z\hat C^{\mu}(\Delta_T^3; B_\alpha)
:=
Z\hat C(\Delta_T^3; B_\alpha)\cap C^{\mu}(\Delta_T^3; B_\alpha).
\]

The preceding definitions only describe the non-uniqueness of actual semigroup increments, namely increments \(R\) satisfying \(\hat\delta R=h\).  Their existence requires a separate construction.  The construction in Sections~\ref{sec:quotient-semigroup} and~\ref{sec:critical-semigroup} will first produce compatible frozen quotient families.  Before that construction, we record the quotient-level uniqueness statements that hold whenever global semigroup increments are available.

\begin{proposition}\label{prop:quotient-target}
Let $\gamma\in(0,1)$, $\alpha\in\mathbb{R}$, and let
$h\in Z\hat C^{\gamma}(\Delta_T^3; B_\alpha)$. 
\begin{enumerate}
\item If $R,\widetilde R\in E^{\gamma}(\Delta_T^2; B_\alpha)$ satisfy $\hat\delta R=h=\hat\delta \widetilde R$, then
$R-\widetilde R\in \mathcal Z^{\gamma}(\Delta_T^2; B_\alpha)$.

\item For  $h\in Z\hat C^{1}(\Delta_T^3; B_\alpha)$, if $R,\widetilde R\in E_{\log}^{1}(\Delta_T^2; B_\alpha)$ satisfy $\hat\delta R=h=\hat\delta \widetilde R$, then $R-\widetilde R\in \mathcal Z_{\log}^{1}(\Delta_T^2; B_\alpha)$.
\end{enumerate}
\end{proposition}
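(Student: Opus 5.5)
This is essentially a linearity argument, and the plan is to reduce both parts to two observations. First, the difference of two solutions is killed by $\hat\delta$. Second, the regularity classes in question are vector spaces.

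Set $K:=R-\widetilde R$.

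\textbf{Linearity of $\hat\delta$.} The coboundary $\delta$ is linear, and the operator $a$ acts by left multiplication with the bounded linear operators $a_{t_1t_2}=S_{t_1-t_2}-\mathrm{Id}$. Hence $\hat\delta=\delta-a$ is linear on $C(\Delta_T^2;B_\alpha)$, and
\[
\hat\delta K=\hat\delta R-\hat\delta\widetilde R=h-h=0 .
\]
Explicitly, for $0\le s\le u\le t\le T$ this reads $K_{ts}=K_{tu}+S_{t-u}K_{us}$. Also $K$ is continuous as a difference of continuous maps. So $K\in Z\hat C(\Delta_T^2;B_\alpha)$.

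\textbf{$E^\gamma$ and $E^1_{\log}$ are vector spaces.} For each fixed $\varepsilon$, the seminorm $\|\cdot\|_{\gamma-\varepsilon,\alpha+\varepsilon}$ in \eqref{eq:c2-holder} is a supremum of ratios of norms. It therefore satisfies the triangle inequality and absolute homogeneity, so $C^{\gamma-\varepsilon}(\Delta_T^2;B_{\alpha+\varepsilon})$ is a linear subspace of $C(\Delta_T^2;B_{\alpha+\varepsilon})$.

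An intersection of linear subspaces is again a linear subspace. This gives $K\in E^\gamma(\Delta_T^2;B_\alpha)$, with
\[
\|K\|_{\gamma-\varepsilon,\alpha+\varepsilon}\le\|R\|_{\gamma-\varepsilon,\alpha+\varepsilon}+\|\widetilde R\|_{\gamma-\varepsilon,\alpha+\varepsilon}
\]
for every $\varepsilon\in(0,\gamma)$.

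In the critical case, the same reasoning applies to the logarithmic seminorm $\|\cdot\|_{1,\log;\alpha}$, which is also a supremum of ratios of norms. Combined with the Hölder intersection in \eqref{eq:critical-target}, this gives $K\in E^1_{\log}(\Delta_T^2;B_\alpha)$.

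\textbf{Conclusion.} Intersecting the two memberships gives $K\in\mathcal Z^\gamma(\Delta_T^2;B_\alpha)$ in part (1) and $K\in\mathcal Z^1_{\log}(\Delta_T^2;B_\alpha)$ in part (2), as claimed.

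The hypotheses $h\in Z\hat C^\gamma$, respectively $h\in Z\hat C^1$, play no role beyond ensuring that the equation $\hat\delta R=h$ is meaningful; they are needed for existence, not for this uniqueness statement. There is no real analytic obstacle. The only point that needs care is that $E^\gamma$ is closed under subtraction, and this holds because each member of the defining intersection is a seminormed linear space.
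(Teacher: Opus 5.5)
Your proposal is correct and follows the paper's own argument: linearity of $\hat\delta$ gives $\hat\delta(R-\widetilde R)=0$, and since $E^{\gamma}(\Delta_T^2;B_\alpha)$ (respectively $E^{1}_{\log}(\Delta_T^2;B_\alpha)$) is a linear space, $R-\widetilde R$ lies in its intersection with $Z\hat C(\Delta_T^2;B_\alpha)$. Your explicit check that the H\"older and logarithmic classes are closed under subtraction spells out the paper's one-line appeal to linearity.
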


\begin{proof}
We only prove the first conclusion, as the second one is identical.  By $\hat\delta R=h=\hat\delta \widetilde R$, we obtain  $\hat\delta(R-\widetilde R)=0$.
Since $E^{\gamma}(\Delta_T^2; B_\alpha)$ is a linear space, $R-\widetilde R\in E^{\gamma}(\Delta_T^2; B_\alpha)$,
and therefore
\[
R-\widetilde R\in  E^{\gamma}(\Delta_T^2; B_\alpha)\cap Z\hat C(\Delta_T^2; B_\alpha)=\mathcal Z^{\gamma}(\Delta_T^2; B_\alpha).\qedhere
\]
\end{proof}

We also record the corresponding fact in the regular case $\gamma>1$.

\begin{proposition}\label{prop:trivial-cocycles-above-one}
Let $\gamma>1$ and $\alpha\in\mathbb{R}$. Then
\[
Z\hat C(\Delta_T^2; B_\alpha)\cap C^{\gamma}(\Delta_T^2; B_\alpha)=\{0\}.
\]
\end{proposition}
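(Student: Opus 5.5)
The plan is the classical ``Riemann sum'' argument, adapted to the semigroup coboundary. Let $K\in Z\hat C(\Delta_T^2;B_\alpha)\cap C^{\gamma}(\Delta_T^2;B_\alpha)$ with $\gamma>1$. By \eqref{eq:delta_hat_2} applied with $h=K$, the condition $\hat\delta K=0$ reads
\[
K_{ts}=K_{tu}+S_{t-u}K_{us},\qquad 0\le s\le u\le t\le T.
\]
Fix $0\le s<t\le T$ and a partition $s=t_0<t_1<\dots<t_n=t$. Iterating this identity by induction on $n$, splitting off the last interval each time via $K_{t\,t_0}=K_{t\,t_{n-1}}+S_{t-t_{n-1}}K_{t_{n-1}t_0}$ and using $S_{t-t_{n-1}}S_{t_{n-1}-t_{i+1}}=S_{t-t_{i+1}}$, I would obtain the telescoped representation
\[
K_{ts}=\sum_{i=0}^{n-1} S_{t-t_{i+1}}\,K_{t_{i+1}t_i}.
\]

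Next I need a uniform operator bound $M:=\sup_{0\le r\le T}\|S_r\|_{\mathcal L(B_\alpha)}<\infty$. Since $S$ is a strongly continuous (indeed analytic) semigroup on $B_\alpha$, the orbits $r\mapsto S_rx$ are continuous on $[0,T]$ and hence bounded. The uniform boundedness principle then gives $M<\infty$; this is the standard exponential bound $\|S_r\|\le Me^{\omega r}$.

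With the uniform partition $t_i=s+i(t-s)/n$, the triangle inequality gives
\[
\|K_{ts}\|_{B_\alpha}\le M\sum_{i=0}^{n-1}\|K\|_{\gamma,\alpha}\Bigl(\tfrac{t-s}{n}\Bigr)^{\gamma}
= M\|K\|_{\gamma,\alpha}\,|t-s|^{\gamma}\,n^{1-\gamma}.
\]
Since $\gamma>1$, the right-hand side tends to $0$ as $n\to\infty$. Hence $K_{ts}=0$ for all $s<t$. The case $s=t$ is trivial, since $\|K_{tt}\|\le \|K\|_{\gamma,\alpha}\cdot 0$. This proves the claim.

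The only step that is not purely algebraic is the uniform bound on $\|S_r\|$ on $B_\alpha$. The smoothing estimates recorded above only control $S_r$ between different levels of the scale, or control $S_r-\mathrm{Id}$ with a loss of regularity. So I would justify $M<\infty$ directly from strong continuity via Banach--Steinhaus, as indicated above, rather than from those estimates. Everything else is a direct telescoping computation.
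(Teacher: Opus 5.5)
Your argument is correct and is essentially the paper's proof: it uses the same telescoped identity $K_{ts}=\sum_i S_{t-t_{i+1}}K_{t_{i+1}t_i}$, the same uniform bound $M_{\alpha,T}=\sup_{0\le r\le T}\|S_r\|_{\mathcal L(B_\alpha)}$ (which the paper simply asserts and you justify via Banach--Steinhaus), and the same factor $n^{1-\gamma}$ (the paper's $|\Pi|^{\gamma-1}$) to force $K_{ts}=0$ for $s<t$. One small correction: the seminorm \eqref{eq:c2-holder} is a supremum over $s<t$ only, so it does not give $\|K_{tt}\|_{B_\alpha}\le 0$; instead use continuity of $K$ as the paper does, or the cocycle identity at $s=u=t$, which reads $K_{tt}=K_{tt}+K_{tt}$.
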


\begin{proof}
Setting $h\in Z\hat C(\Delta_T^2; B_\alpha)\cap C^{\gamma}(\Delta_T^2; B_\alpha)$, since $h\in Z\hat C(\Delta_T^2; B_\alpha)$, relation \eqref{eq:delta_hat_2} gives
\begin{equation}
\label{eq:cocycle-identity}
h_{ts}=h_{tu}+S_{t-u}h_{us},
\qquad 0\le s\le u\le t\le T.
\end{equation}
Fix $0\le s<t\le T$ and a partition $\Pi=\{s=t_0<t_1<\cdots<t_m=t\}$. We then proceed by iterating \eqref{eq:cocycle-identity} over this partition.
First, we expand $h_{ts}$ (i.e., $h_{t_m t_0}$) at the point $t_{m-1}$, 
\[
h_{t_m t_0} = h_{t_m t_{m-1}} + S_{t_m - t_{m-1}} h_{t_{m-1} t_0}.
\]
Next, we expand the remaining term $h_{t_{m-1} t_0}$ at the point $t_{m-2}$,
\[
h_{t_{m-1} t_0} = h_{t_{m-1} t_{m-2}} + S_{t_{m-1} - t_{m-2}} h_{t_{m-2} t_0}.
\]
Substituting this expansion into the previous equation and using the semigroup property $S_aS_b=S_{a+b}$ gives
\begin{align*}
h_{t_m t_0} &= h_{t_m t_{m-1}} + S_{t_m - t_{m-1}} \left( h_{t_{m-1} t_{m-2}} + S_{t_{m-1} - t_{m-2}} h_{t_{m-2} t_0} \right) \\
&= h_{t_m t_{m-1}} + S_{t_m - t_{m-1}} h_{t_{m-1} t_{m-2}} + S_{t_m - t_{m-2}} h_{t_{m-2} t_0}.
\end{align*}
Notice that $S_{t-t_m} = S_0 = \text{Id}$. By iterating this process $m$ times for the partition $\Pi$, we eventually arrive at the following identity,
\begin{equation} \label{eq:partition-representation}
h_{ts} = \sum_{i=0}^{m-1} S_{t-t_{i+1}} h_{t_{i+1}t_i}.
\end{equation}
Let
\begin{equation}  \label{eq:semigroup-level-bound}
M_{\alpha,T}:=\sup_{0\le r\le T}\|S_r\|_{\mathcal L(B_\alpha, B_\alpha)}<\infty.
\end{equation}
Using \eqref{eq:c2-holder} and \eqref{eq:semigroup-level-bound}, we obtain
\[
\|h_{ts}\|_{B_\alpha}
\le
M_{\alpha,T}\sum_{i=0}^{m-1}\|h_{t_{i+1}t_i}\|_{B_\alpha}
\le
M_{\alpha,T}\|h\|_{\gamma,\alpha}\sum_{i=0}^{m-1}|t_{i+1}-t_i|^\gamma.
\]
Then
\[
\|h_{ts}\|_{B_\alpha}
\le
M_{\alpha,T}\|h\|_{\gamma,\alpha}\sum_{i=0}^{m-1}|t_{i+1}-t_i|\,|t_{i+1}-t_i|^{\gamma-1}
\le
M_{\alpha,T}\|h\|_{\gamma,\alpha}|\Pi|^{\gamma-1}\,(t-s).
\]
Since $\gamma>1$, letting $|\Pi|\to 0$ gives $\|h_{ts}\|_{B_\alpha}=0$. Thus $h_{ts}=0$
for all $s<t$, and continuity yields $h_{tt}=0$ as well. Hence $h=0$.
\end{proof}

The preceding propositions describe the ambiguity of  semigroup increments, without constructing a canonical one.  The next corollary records their canonical quotient information.

\begin{coro}\label{cor:canonical-target}
Let $\gamma>0$ and $\alpha\in\mathbb{R}$.

\begin{enumerate}
\item If $\gamma\in(0,1)$ and a semigroup increment $R\in E^{\gamma}(\Delta_T^2; B_\alpha)$ of a given semigroup defect is available, then its canonical information at this regularity is its class in $\mathfrak E_2^{\gamma}(\Delta_T^2; B_\alpha)$. \mlabel{it:canona}

\item If $\gamma=1$ and a semigroup increment $R\in E_{\log}^{1}(\Delta_T^2; B_\alpha)$ is available, then its canonical information is its class in $\mathfrak E_{\log}^{1}(\Delta_T^2; B_\alpha)$. \mlabel{it:canonb}

\item If $\gamma>1$, this quotient ambiguity disappears. \mlabel{it:canonc}
\end{enumerate}
\end{coro}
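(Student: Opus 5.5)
The corollary is a repackaging of Propositions~\ref{prop:quotient-target} and~\ref{prop:trivial-cocycles-above-one}. The plan is to treat the three items separately and to make precise what ``canonical information'' means in each case. In the low-regularity cases it means that the class of $R$ in the quotient does not depend on which solution $R$ of $\hat\delta R=h$ in the given regularity space one picks. For $\gamma>1$ it means that the representative itself is unique.

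For item~\eqref{it:canona}, fix $h\in Z\hat C^{\gamma}(\Delta_T^3;B_\alpha)$ and suppose $R,\widetilde R\in E^{\gamma}(\Delta_T^2;B_\alpha)$ both satisfy $\hat\delta R=h=\hat\delta\widetilde R$. Part~(1) of Proposition~\ref{prop:quotient-target} gives $R-\widetilde R\in\mathcal Z^{\gamma}(\Delta_T^2;B_\alpha)$. Hence $[R]=[\widetilde R]$ in $\mathfrak E_2^{\gamma}(\Delta_T^2;B_\alpha)$, and the map $h\mapsto[R]$ is well defined on those defects that admit a solution.

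Conversely, I will show that no finer invariant survives. For any $K\in\mathcal Z^{\gamma}(\Delta_T^2;B_\alpha)$, the sum $R+K$ again lies in $E^{\gamma}$, because $E^\gamma$ is linear. It also solves the same equation, since $\hat\delta$ is linear and $\hat\delta K=0$. So the fibre of solutions is exactly the coset $R+\mathcal Z^{\gamma}$, and the class is precisely the information determined by $h$.

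Item~\eqref{it:canonb} is identical. One uses part~(2) of Proposition~\ref{prop:quotient-target} together with the linearity of $E^{1}_{\log}$ and $\mathcal Z^{1}_{\log}$.

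For item~\eqref{it:canonc}, suppose $R,\widetilde R\in C^{\gamma}(\Delta_T^2;B_\alpha)$ with $\gamma>1$ solve $\hat\delta R=\hat\delta\widetilde R$. Their difference lies in $Z\hat C(\Delta_T^2;B_\alpha)\cap C^{\gamma}(\Delta_T^2;B_\alpha)$, which is $\{0\}$ by Proposition~\ref{prop:trivial-cocycles-above-one}. Hence $R=\widetilde R$: the quotient is by the zero space and the representative itself is canonical.

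The only step needing care is interpreting the regularity space in item~\eqref{it:canonc}, which I take to be $C^{\gamma}(\Delta_T^2;B_\alpha)$. The statement is informal, so this and the fibre description above are the substance of the argument. Beyond these interpretive points there is no real obstacle.
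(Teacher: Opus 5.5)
Your proposal is correct and follows the paper's proof: items (a) and (b) come from Proposition~\ref{prop:quotient-target}, and item (c) comes from Proposition~\ref{prop:trivial-cocycles-above-one}. Your converse observation, that the solution set is exactly the coset $R+\mathcal Z^{\gamma}(\Delta_T^2;B_\alpha)$, is a harmless sharpening that the paper states only informally in its introduction.
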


\begin{proof}
Items~(\mref{it:canona}) and~(\mref{it:canonb}) follow from Proposition~\ref{prop:quotient-target}. Indeed, any two semigroup increments of the same defect differ by an element of the corresponding semigroup cocycle space.  Hence only their quotient class is canonical.  Item~(\mref{it:canonc}) follows from Proposition~\ref{prop:trivial-cocycles-above-one}.
\end{proof}

\section{Frozen terminal-time reduction to the classical low-regularity Sewing problem}
\label{sec:frozen-reduction}
The purpose of this section is to reduce the semigroup Sewing equation 
\begin{equation}
\label{eq:semigroup-sewing-eq}
\hat\delta R = h
\end{equation}
to a family of ordinary Sewing equations on simplices with frozen terminal time. The key mechanism is a conjugation by the semigroup, which transforms the modified
coboundary $\hat\delta$ from Section~\ref{sec:framework} into the ordinary coboundary $\delta$. 

We first introduce the localized counterparts of the algebraic and analytical structures from Section~\ref{sec:framework}, restricting the time horizon to a frozen terminal point $\tau \in [0,T]$. For each terminal time $\tau \in [0,T]$, we define the frozen simplex 
$$\Delta_{\tau}^{n} := \{(t_{1}, \dots, t_{n}) \in [0, \tau]^{n} : 0 \le t_{n} \le \dots \le t_{1} \le \tau\}.$$
The ordinary increment spaces $C(\Delta_\tau^n;B_\beta)$ and their H\"older variants are defined as in Section~\ref{sec:framework}, but on $[0,\tau]$ and with the ordinary coboundary $\delta$ in place of $\hat\delta$. We also set
\[
ZC(\Delta_\tau^3; B_\beta):=\ker(\delta)\cap C(\Delta_\tau^3; B_\beta),
\quad
ZC^{\mu}(\Delta_\tau^3; B_\beta):=\ker(\delta)\cap C^{\mu}(\Delta_\tau^3; B_\beta).
\]
Correspondingly, the local target spaces are defined as
\[
F^{\gamma}(\Delta_\tau^2; B_\alpha) := \bigcap_{0 < \varepsilon < \gamma} C^{\gamma-\varepsilon}(\Delta_\tau^2; B_{\alpha+\varepsilon}), \quad F_{\log}^{1}(\Delta_\tau^2; B_\alpha):= C_{\log}^{1}(\Delta_\tau^2; B_\alpha) \cap \bigcap_{0 < \varepsilon < 1} C^{1-\varepsilon}(\Delta_\tau^2; B_{\alpha+\varepsilon}).
\]
With the local environments established, we introduce the following transformation which serves as the algebraic bridge between the semigroup and ordinary complexes.

\begin{definition}\label{def:frozen-transform}
Fix $\tau\in[0,T]$. For $n\ge 2$ and $g\in C(\Delta_T^n; B_\alpha)$, 
\[
\Theta_\tau: C(\Delta_T^n; B_\alpha)\to C(\Delta^n_\tau;B_\alpha)
\]
is called {\bf frozen terminal-time transform} if
\[
(\Theta_\tau g)_{t_1\cdots t_n}
=
S_{\tau-t_1}g_{t_1\cdots t_n},
\quad
\forall \,(t_1,\ldots,t_n)\in\Delta_\tau^n.
\]
\end{definition}

The following results establish how this transform intertwines the coboundaries and preserves regularity, effectively reducing semigroup defects to ordinary Sewing data.

\begin{proposition}\label{prop:frozen-intertwining}
Let $\tau\in[0,T]$ and $\Theta_\tau$ be a frozen terminal-time transform. Then for $n=2, 3$ and every $g\in C(\Delta_T^n; B_\alpha)$,
\[\delta(\Theta_\tau g)=\Theta_\tau(\hat\delta g).\]
\end{proposition}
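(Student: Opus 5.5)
The identity is purely algebraic, so I will prove it by evaluating both sides pointwise on the frozen simplex and comparing term by term. The only analytic input is the semigroup property $S_{\tau-t_1}S_{t_1-t_2}=S_{\tau-t_2}$ for $t_2\le t_1\le\tau$. Fix $\tau$ and a point $(t_1,\dots,t_{n+1})\in\Delta_\tau^{n+1}$. Since $\Delta_\tau^{k}\subseteq\Delta_T^{k}$, both sides make sense there: $\Theta_\tau(\hat\delta g)$ only requires $\hat\delta g$ on $\Delta_T^{n+1}$.

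The key structural observation concerns the face map in \eqref{eq:delta-def}. For every $i\ge 2$, the face $(t_1,\dots,\widehat{t_i},\dots,t_{n+1})$ still has first time $t_1$. Only the face $i=1$ has a different first time, namely $t_2$. Hence
\[
\delta(\Theta_\tau g)_{t_1\cdots t_{n+1}}
=-S_{\tau-t_2}g_{t_2\cdots t_{n+1}}+S_{\tau-t_1}\sum_{i=2}^{n+1}(-1)^i g_{t_1\cdots\widehat{t_i}\cdots t_{n+1}}.
\]
On the other side, by \eqref{eq:delta_hat_def},
\[
\Theta_\tau(\hat\delta g)_{t_1\cdots t_{n+1}}
=S_{\tau-t_1}\Bigl(-g_{t_2\cdots t_{n+1}}+\sum_{i=2}^{n+1}(-1)^i g_{t_1\cdots\widehat{t_i}\cdots t_{n+1}}-(S_{t_1-t_2}-\mathrm{Id})g_{t_2\cdots t_{n+1}}\Bigr).
\]
The sums over $i\ge2$ agree. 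For the remaining terms, $-S_{\tau-t_1}g_{t_2\cdots t_{n+1}}+S_{\tau-t_1}g_{t_2\cdots t_{n+1}}$ cancels, leaving $-S_{\tau-t_1}S_{t_1-t_2}g_{t_2\cdots t_{n+1}}=-S_{\tau-t_2}g_{t_2\cdots t_{n+1}}$. This matches the $i=1$ term on the left.

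Concretely, I will write this out for $n=2$ using \eqref{eq:delta_hat_2}. There the left side is $S_{\tau-t}g_{ts}-S_{\tau-t}g_{tu}-S_{\tau-u}g_{us}$, and the right side is $S_{\tau-t}(g_{ts}-g_{tu}-S_{t-u}g_{us})$. For $n=3$ I will write it out analogously with four time points.

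There is no real obstacle. The only points needing care are:
\begin{itemize}
\item the sign convention in \eqref{eq:delta-def}, where the $i=1$ face carries a minus sign;
\item checking that all semigroup indices are nonnegative on $\Delta_\tau^{n+1}$, so that the identity $S_{\tau-t_1}S_{t_1-t_2}=S_{\tau-t_2}$ is legitimate.
\end{itemize}
Continuity of $\Theta_\tau g$ is not needed for the identity itself. If desired, it follows from strong continuity of $S$ together with uniform boundedness of $S_r$ on $[0,T]$, as in \eqref{eq:semigroup-level-bound}. The same computation in fact works for every $n\ge2$.
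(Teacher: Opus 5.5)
Your proposal is correct and follows the same route as the paper: a pointwise evaluation on $\Delta_\tau^{n+1}$ whose only nontrivial input is $S_{\tau-t_1}S_{t_1-t_2}=S_{\tau-t_2}$, and the paper writes out the case $n=2$ exactly as in your displayed check. Your observation that only the $i=1$ face changes the leading time variable makes explicit, uniformly for all $n\ge 2$, the case $n=3$ that the paper only calls ``similar.''
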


\begin{proof}
Take $g\in C(\Delta_T^2; B_\alpha)$ and $(t,u,s)\in \Delta_\tau^3$. By
\eqref{eq:delta-def}, \eqref{eq:delta_hat_def}, and the semigroup property,
\begin{align*}
\delta(\Theta_\tau g)_{tus}
&=
(\Theta_\tau g)_{ts}
-(\Theta_\tau g)_{tu}
-(\Theta_\tau g)_{us}\\
&=
S_{\tau-t}g_{ts}
-S_{\tau-t}g_{tu}
-S_{\tau-u}g_{us} \\
&=
S_{\tau-t}\bigl(g_{ts}-g_{tu}-S_{t-u}g_{us}\bigr) \\
&=
\Theta_\tau(\hat\delta g)_{tus},
\end{align*}
which proves $\delta(\Theta_\tau g)=\Theta_\tau(\hat\delta g)$. The case $n=3$ is similar to this.
\end{proof}

\begin{coro}
\label{cor:frozen-defects}
Let $h\in Z\hat C(\Delta_T^3;B_\alpha)$ and define
\begin{equation}
\label{eq:frozen-defect}
h^\tau := \Theta_\tau h, \quad  \forall\tau\in[0,T].
\end{equation}
Then $h^\tau \in ZC_3(\Delta_\tau;B_\alpha)$. Moreover, for every $\tau\in[0,T]$ there exists $b^\tau\in C(\Delta_\tau^2; B_\alpha)$ such that $\delta b^\tau = h^\tau$.
\end{coro}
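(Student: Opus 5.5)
The plan has two steps: first show that $h^\tau$ is an ordinary cocycle on the frozen simplex, then write down an explicit primitive $b^\tau$ in the spirit of the proof of Lemma~\ref{prop:exactness}.

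\emph{Cocycle property.} Fix $\tau\in[0,T]$. Proposition~\ref{prop:frozen-intertwining} with $n=3$ gives
\[
\delta(h^\tau)=\delta(\Theta_\tau h)=\Theta_\tau(\hat\delta h)=0,
\]
because $h\in Z\hat C(\Delta_T^3;B_\alpha)$. The case $n=3$ there was only asserted as ``similar'', so I would verify the identity directly at a point $(t_1,t_2,t_3,t_4)\in\Delta_\tau^4$. The three terms of $\delta(\Theta_\tau h)$ that keep $t_1$ carry the prefactor $S_{\tau-t_1}$. The term omitting $t_1$ is $-S_{\tau-t_2}h_{t_2t_3t_4}$, which equals $-S_{\tau-t_1}S_{t_1-t_2}h_{t_2t_3t_4}$ by the semigroup property. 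Factoring out $S_{\tau-t_1}$ leaves exactly $(\hat\delta h)_{t_1t_2t_3t_4}$, with the $a$-term produced by the $\mathrm{Id}$ part.

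\emph{Continuity.} We need $h^\tau\in C(\Delta_\tau^3;B_\alpha)$. The map $(t,u,s)\mapsto S_{\tau-t}h_{tus}$ is continuous by the following standard argument. The semigroup is strongly continuous and locally bounded by $M_{\alpha,T}$ from \eqref{eq:semigroup-level-bound}, and $h$ is continuous. Hence
\[
\|S_{\tau-t}h_{tus}-S_{\tau-t'}h_{t'u's'}\|\le M_{\alpha,T}\|h_{tus}-h_{t'u's'}\|+\|(S_{\tau-t}-S_{\tau-t'})h_{t'u's'}\|.
\]
The first term is small by continuity of $h$. For the second term, use a compactness argument: the image $h(\Delta_\tau^3)$ is compact, and strong continuity is uniform on compact sets for uniformly bounded operators.

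\emph{Existence of $b^\tau$.} I would mimic the proof of Lemma~\ref{prop:exactness} with $\delta$ in place of $\hat\delta$, that is, with $a=0$, and set $b^\tau_{ts}:=h^\tau_{ts0}$. The sign is $(-1)^{n+1}=+1$ for $n=2$. Evaluating the cocycle identity $\delta h^\tau=0$ at $(t,u,s,0)$ gives
\[
h^\tau_{tus}=h^\tau_{ts0}-h^\tau_{tu0}-h^\tau_{us0}=(\delta b^\tau)_{tus}.
\]
Continuity of $b^\tau$ is inherited from that of $h^\tau$.

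The only step requiring any care is the continuity check in the second step, specifically the uniformity of strong continuity on compacta. Everything else is algebraic.
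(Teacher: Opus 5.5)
Your route is the paper's: the intertwining identity (which you verify directly for $n=3$) gives $\delta h^\tau=\Theta_\tau(\hat\delta h)=0$, and a primitive is read off from the cocycle identity at $(t,u,s,0)$. Your continuity argument for $h^\tau$ is a correct extra detail that the paper leaves implicit.

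However, the primitive you write down has the wrong sign, so the final step fails as stated. In Lemma~\ref{prop:exactness} the cocycle lives on $\Delta_T^{n+1}$. A $3$-increment therefore corresponds to $n=2$, and the prefactor is $(-1)^{n+1}=(-1)^3=-1$, not $+1$. Directly, with the convention \eqref{eq:delta-def},
\[
0=(\delta h^\tau)_{tus0}=-h^\tau_{us0}+h^\tau_{ts0}-h^\tau_{tu0}+h^\tau_{tus},
\]
so $h^\tau_{tus}=h^\tau_{us0}-h^\tau_{ts0}+h^\tau_{tu0}$. This is the negative of the identity you display. Consequently, for $b^\tau_{ts}:=h^\tau_{ts0}$ you get $(\delta b^\tau)_{tus}=h^\tau_{ts0}-h^\tau_{tu0}-h^\tau_{us0}=-h^\tau_{tus}$, that is, $\delta b^\tau=-h^\tau$. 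As a sanity check: if $h^\tau=\delta g$, then your $b^\tau$ equals $-g+\delta f$ with $f_t:=g_{t0}$.

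The repair is immediate. Take $b^\tau_{ts}:=-h^\tau_{ts0}$, exactly as the paper does. Everything else in your argument, including continuity of $b^\tau$, then goes through unchanged.
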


\begin{proof}
Since $h\in Z\hat C(\Delta_T^3;B_\alpha)$, Proposition~\ref{prop:frozen-intertwining} gives
\[
\delta h^\tau=\delta(\Theta_\tau h)=\Theta_\tau(\hat\delta h)=0,
\]
which proves $h^\tau \in ZC_3(\Delta_\tau;B_\alpha)$.
Furthermore,  for $ 0\le s\le u\le t\le \tau$ define $b^\tau_{ts}:=-h^\tau_{ts0}$  and
\[
(\delta b^\tau)_{tus}=-b^\tau_{us}+b^\tau_{ts}-b^\tau_{tu}=h^\tau_{us0}-h^\tau_{ts0}+h^\tau_{tu0}.
\]
On the other hand, since $\delta h^\tau=0$,
\[
0=(\delta h^\tau)_{tus0}=-h^\tau_{us0}+h^\tau_{ts0}-h^\tau_{tu0}+h^\tau_{tus}.
\]
Therefore
\[
h^\tau_{us0}-h^\tau_{ts0}+h^\tau_{tu0}=h^\tau_{tus},
\]
which is exactly $\delta b^\tau = h^\tau$.
\end{proof}

After the algebraic reduction, we turn to the regularity bounds for the frozen transform.

\begin{proposition}
\label{prop:frozen-regularity}
Set $\gamma\in(0,1]$, $\alpha\in\mathbb{R}$, and $\tau\in[0,T]$. Let  $h\in Z\hat C^{\gamma}(\Delta_T^3;B_\alpha)$ and define $h^\tau$ by \eqref{eq:frozen-defect}. Then
\begin{equation}\label{eq:frozen-defect-regularity}
h^\tau \in ZC^{\gamma}(\Delta_\tau^3;B_\alpha) \quad \text{and} \quad \|h^\tau\|_{\gamma,\alpha;\tau}
\le
M_{\alpha,T}\|h\|_{\gamma,\alpha}.
\end{equation}
Furthermore, if $\gamma\in(0,1)$ and $R\in E^{\gamma}(\Delta_T^2; B_\alpha)$, then $\Theta_\tau R \in F^{\gamma}(\Delta_\tau^2; B_\alpha)$
and for every $0<\varepsilon<\gamma$,
\[\|\Theta_\tau R\|_{\gamma-\varepsilon,\alpha+\varepsilon;\tau}\le M_{\alpha+\varepsilon,T}\|R\|_{\gamma-\varepsilon,\alpha+\varepsilon};\]
if $R\in E_{\log}^{1}(\Delta_T^2; B_\alpha)$, then $\Theta_\tau R \in F_{\log}^{1}(\Delta_\tau^2; B_\alpha)$ and for every $\varepsilon\in(0,1)$,
\[\|\Theta_\tau R\|_{1-\varepsilon,\alpha+\varepsilon;\tau}\le M_{\alpha+\varepsilon,T}\|R\|_{1-\varepsilon,\alpha+\varepsilon},\]
while $\|\Theta_\tau R\|_{1,\log;\tau,\alpha}\le M_{\alpha,T} \|R\|_{1,\log;\alpha}$.
\end{proposition}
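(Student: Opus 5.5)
The proof reduces to two ingredients: the cocycle property of $h^\tau$, already established in Corollary~\ref{cor:frozen-defects}, and the uniform bound $M_{\beta,T}=\sup_{0\le r\le T}\|S_r\|_{\mathcal L(B_\beta,B_\beta)}$ from \eqref{eq:semigroup-level-bound}, applied at the levels $\beta=\alpha$ and $\beta=\alpha+\varepsilon$. Everything is pointwise: $\Theta_\tau$ multiplies each value by an operator $S_{\tau-t_1}$ with $0\le\tau-t_1\le\tau\le T$. Each H\"older-type seminorm on the frozen simplex is a supremum over a subset of the corresponding index set on $\Delta_T^n$, weighted by the same power of the time gaps. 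So the norms can grow by at most the factor $M_{\beta,T}$.

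For the defect, Corollary~\ref{cor:frozen-defects} (via Proposition~\ref{prop:frozen-intertwining}) gives $\delta h^\tau=0$, so only the regularity bound remains. Here the three-point norm \eqref{eq:c3-holder} is an infimum over decompositions, so I argue at the level of decompositions.
\begin{itemize}
\item Take any decomposition $h=\sum_{j=1}^m h_j$ with $0<\rho_j<\gamma$. By linearity of $\Theta_\tau$, this gives $h^\tau=\sum_j\Theta_\tau h_j$, which is an admissible decomposition on $\Delta_\tau^3$.
\item For $(t,u,s)\in\Delta^3_\tau$ with $s<u<t$,
\[
\|(\Theta_\tau h_j)_{tus}\|_{B_\alpha}=\|S_{\tau-t}(h_j)_{tus}\|_{B_\alpha}\le M_{\alpha,T}\|h_j\|_{\rho_j,\gamma-\rho_j;\alpha}|t-u|^{\rho_j}|u-s|^{\gamma-\rho_j}.
\]
\item Summing over $j$ gives $\|h^\tau\|_{\gamma,\alpha;\tau}\le M_{\alpha,T}\sum_j\|h_j\|_{\rho_j,\gamma-\rho_j;\alpha}$. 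Taking the infimum over decompositions of $h$ yields \eqref{eq:frozen-defect-regularity}, and in particular $h^\tau\in ZC^\gamma(\Delta_\tau^3;B_\alpha)$.
\end{itemize}
Continuity of $h^\tau$ is needed for it to lie in $C(\Delta^3_\tau;B_\alpha)$ at all. It follows from the strong continuity of $S$ together with the uniform bound $M_{\alpha,T}$ and the continuity of $h$, through the usual joint-continuity argument
\[
S_{r'}x'-S_rx=S_{r'}(x'-x)+(S_{r'}-S_r)x.
\]

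For increments $R\in E^\gamma(\Delta_T^2;B_\alpha)$, fix $\varepsilon\in(0,\gamma)$. For $0\le s<t\le\tau$,
\[
\|S_{\tau-t}R_{ts}\|_{B_{\alpha+\varepsilon}}\le M_{\alpha+\varepsilon,T}\|R\|_{\gamma-\varepsilon,\alpha+\varepsilon}|t-s|^{\gamma-\varepsilon}.
\]
Dividing by $|t-s|^{\gamma-\varepsilon}$ and taking the supremum gives the claimed bound for every $\varepsilon$, hence $\Theta_\tau R\in F^\gamma(\Delta^2_\tau;B_\alpha)$. The case $R\in E^1_{\log}$ is identical. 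The $C^{1-\varepsilon}(B_{\alpha+\varepsilon})$ parts are handled at level $\alpha+\varepsilon$, and the logarithmic seminorm at level $\alpha$, using the same weight $|t-s|(1+|\log|t-s||)$ on both sides. Continuity of $\Theta_\tau R$ again comes from strong continuity of $S$ on each $B_\beta$.

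I expect the main obstacle to be conceptual rather than technical: making sure $M_{\beta,T}<\infty$ at every level $\beta$ used. This should follow from the smoothing and continuity estimates of Section~\ref{sec:framework}, since $S_r=\mathrm{Id}+(S_r-\mathrm{Id})$ combined with $B_{\beta+\varepsilon}\hookrightarrow B_\beta$ is not quite enough on its own. The cleanest route is to take this uniform bound as part of the analytic-semigroup assumption, or to obtain it from strong continuity via the uniform boundedness principle, exactly as in \eqref{eq:semigroup-level-bound}.
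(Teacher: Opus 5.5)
Your proposal is correct and follows essentially the same route as the paper. The cocycle property comes from Corollary~\ref{cor:frozen-defects}, the three-point bound is obtained decomposition by decomposition using $M_{\alpha,T}$ before passing to the infimum, and the bounds for $\Theta_\tau R$ are the same pointwise estimate at levels $\alpha+\varepsilon$ and $\alpha$. Your remarks on the continuity of $h^\tau$ and $\Theta_\tau R$, and on the finiteness of $M_{\beta,T}$ via uniform boundedness, fill in points the paper takes for granted.
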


\begin{proof}
Since $h^\tau \in ZC_3(\Delta_\tau;B_\alpha)$ by Corollary~\ref{cor:frozen-defects}, it remains only to estimate its norm. Let $h=\sum_{j=1}^m h_j$ be an admissible decomposition in \eqref{eq:c3-holder}, with
parameters $0<\rho_j<\gamma$. Then
\[
\|(\Theta_\tau h_j)_{tus}\|_{B_\alpha}=\|S_{\tau-t}h_{j,tus}\|_{B_\alpha}\overset{\eqref{eq:semigroup-level-bound}}{\le}M_{\alpha,T}\|h_{j,tus}\|_{B_\alpha}.
\]
Hence
\[
\|\Theta_\tau h_j\|_{\rho_j,\gamma-\rho_j;\alpha}\le M_{\alpha,T}\|h_j\|_{\rho_j,\gamma-\rho_j;\alpha}.
\]
Taking the infimum over all admissible decompositions yields \eqref{eq:frozen-defect-regularity}. The remaining conclusions are proved in the same way.
\end{proof}

To ensure coherence across varying terminal times, we introduce the following notion of compatibility. For later use, set
\[
\mathfrak D_T^3:=\{(\tau,t,s)\in[0,T]^3:\ 0\le s\le t\le \tau\le T\}.
\]

\begin{definition}\label{def:compatible-frozen-family}
Fix $h\in Z\hat C(\Delta_T^3;B_\alpha)$ and set $h^\tau:=\Theta_\tau h$. A compatible frozen increment family over $h$ is a family $r=(r^\tau)_{\tau\in[0,T]}$
such that
\begin{enumerate}
\item for every $\tau\in[0,T]$, $r^\tau\in C(\Delta_\tau^2;B_\alpha)$;

\item the map $ \mathfrak D_T^3\to B_\alpha, \, (\tau,t,s)\mapsto r^\tau_{ts}$ is continuous;

\item for every $\tau\in[0,T]$, $\delta r^\tau = h^\tau$;

\item for all $0\le s\le t\le \tau'\le \tau\le T$, $r^\tau_{ts}=S_{\tau-\tau'}r^{\tau'}_{ts}$.
\end{enumerate}
\end{definition}

The following results establish the bijective correspondence between semigroup increments and compatible frozen families, thereby justifying the reduction of the Sewing problem.

\begin{proposition}\label{prop:semigroup-to-frozen}
Let $h\in Z\hat C(\Delta_T^3;B_\alpha)$, $R\in  C(\Delta_T^2;B_\alpha)$ and $\hat\delta R = h$. For $\tau\in[0,T]$, define $r^\tau := \Theta_\tau R$.
Then $(r^\tau)_{\tau\in[0,T]}$ is a compatible frozen increment family over $h$.
Moreover, if $\gamma\in(0,1)$ and $R\in E^{\gamma}(\Delta_T^2; B_\alpha)$, then for every
$0<\varepsilon<\gamma$,
\[
\sup_{\tau\in[0,T]}\|r^\tau\|_{\gamma-\varepsilon,\alpha+\varepsilon;\tau}\le M_{\alpha+\varepsilon,T}\|R\|_{\gamma-\varepsilon,\alpha+\varepsilon};
\]
if $R\in E_{\log}^{1}(\Delta_T^2; B_\alpha)$, then for every $\varepsilon\in(0,1)$,
\[
\sup_{\tau\in[0,T]}\|r^\tau\|_{1-\varepsilon,\alpha+\varepsilon;\tau}\le M_{\alpha+\varepsilon,T} \|R\|_{1-\varepsilon,\alpha+\varepsilon},
\]
and $ \sup_{\tau\in[0,T]}\|r^\tau\|_{1,\log;\tau,\alpha}\le M_{\alpha,T}\|R\|_{1,\log;\alpha}$.
\end{proposition}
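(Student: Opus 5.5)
The plan is to verify the four conditions of Definition~\ref{def:compatible-frozen-family} one by one for $r^\tau:=\Theta_\tau R$, and then to read off the norm bounds from Proposition~\ref{prop:frozen-regularity}.

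\textbf{Conditions (1) and (3).} Condition (1) holds because $r^\tau_{ts}=S_{\tau-t}R_{ts}$ is continuous on $\Delta_\tau^2$. Condition (3) is a direct application of Proposition~\ref{prop:frozen-intertwining} with $n=2$:
\[
\delta r^\tau=\delta(\Theta_\tau R)=\Theta_\tau(\hat\delta R)=\Theta_\tau h=h^\tau .
\]

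\textbf{Condition (4).} This is pure semigroup algebra. For $0\le s\le t\le\tau'\le\tau$,
\[
S_{\tau-\tau'}r^{\tau'}_{ts}=S_{\tau-\tau'}S_{\tau'-t}R_{ts}=S_{\tau-t}R_{ts}=r^\tau_{ts}.
\]

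\textbf{Condition (2).} This is the only step with any analytic content: joint continuity of $(\tau,t,s)\mapsto S_{\tau-t}R_{ts}$ on $\mathfrak D_T^3$. I would use strong continuity of $S$ together with the uniform bound $M_{\alpha,T}$ from \eqref{eq:semigroup-level-bound}, writing
\[
S_{\tau-t}R_{ts}-S_{\tau_0-t_0}R_{t_0s_0}
=S_{\tau-t}\bigl(R_{ts}-R_{t_0s_0}\bigr)+\bigl(S_{\tau-t}-S_{\tau_0-t_0}\bigr)R_{t_0s_0}.
\]
The first term is bounded by $M_{\alpha,T}\|R_{ts}-R_{t_0s_0}\|_{B_\alpha}$, which tends to zero by continuity of $R$. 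The second term tends to zero by strong continuity of $r\mapsto S_r x$ on $[0,T]$ applied to the fixed vector $x=R_{t_0s_0}$. Strong continuity in $B_\alpha$ is implicit in the standing assumption that $S$ is an analytic (strongly continuous) semigroup on the scale. I expect this to be the main point requiring care, though it is the standard "uniformly bounded plus strongly continuous implies jointly continuous" argument.

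\textbf{Norm bounds.} For $R\in E^\gamma$ or $E^1_{\log}$, Proposition~\ref{prop:frozen-regularity} gives, for each fixed $\tau$, exactly the stated bounds. Their right-hand sides do not depend on $\tau$, because the constants $M_{\alpha+\varepsilon,T}$ and $M_{\alpha,T}$ are uniform in $\tau\in[0,T]$. Taking the supremum over $\tau$ therefore yields the displayed estimates.
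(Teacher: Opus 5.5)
Your proposal is correct and follows the paper's proof essentially step by step. Membership and the defect identity come from the definition of $\Theta_\tau$ and Proposition~\ref{prop:frozen-intertwining}, transport compatibility from $S_{\tau-\tau'}S_{\tau'-t}=S_{\tau-t}$, and joint continuity from continuity of $R$ together with strong continuity of $S$ (your explicit splitting with the bound $M_{\alpha,T}$ just spells out what the paper states in one line). The uniform bounds come from Proposition~\ref{prop:frozen-regularity}, since $M_{\alpha+\varepsilon,T}$ and $M_{\alpha,T}$ do not depend on $\tau$.
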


\begin{proof}
By Definition~\ref{def:frozen-transform}, each $r^\tau$ belongs to $C(\Delta_\tau^2;B_\alpha)$. The continuity of Definition~\ref{def:compatible-frozen-family} (b) follows from the continuity of $R$
and the strong continuity of the semigroup. Since $\hat\delta R=h$, Proposition~\ref{prop:frozen-intertwining} yields
\[
\delta r^\tau=\delta(\Theta_\tau R)=\Theta_\tau(\hat\delta R)=\Theta_\tau h=h^\tau,
\]
which proves Definition~\ref{def:compatible-frozen-family} (c). Next, let $0\le s\le t\le \tau'\le \tau\le T$. Then
\[
r^\tau_{ts}=(\Theta_\tau R)_{ts}=S_{\tau-t}R_{ts}=S_{\tau-\tau'}S_{\tau'-t}R_{ts}=S_{\tau-\tau'}(\Theta_{\tau'} R)_{ts}=S_{\tau-\tau'}r^{\tau'}_{ts},
\]
which is exactly Definition~\ref{def:compatible-frozen-family} (d).
The uniform regularity bounds follow directly from Proposition~\ref{prop:frozen-regularity}.
\end{proof}

\begin{proposition}\label{prop:frozen-to-semigroup}
Let $h\in Z\hat C(\Delta_T^3;B_\alpha)$ and let $(r^\tau)_{\tau\in[0,T]}$ be a compatible frozen increment family over $h$. For $0\le s\le t\le T$, define $R_{ts}:=r^t_{ts}$. Then
\[
R\in C(\Delta_T^2;B_\alpha)\  \text{\,and\,} \ \hat\delta R = h.
\]
Moreover, if $\gamma\in(0,1)$ and for every $0<\varepsilon<\gamma$, $$ \sup_{\tau\in[0,T]}\|r^\tau\|_{\gamma-\varepsilon,\alpha+\varepsilon;\tau}<\infty,$$ then
$R\in E^{\gamma}(\Delta_T^2; B_\alpha)$ and for every $0<\varepsilon<\gamma$,
\begin{equation}
\label{eq:reconstructed-subcritical-estimate}
\|R\|_{\gamma-\varepsilon,\alpha+\varepsilon}
\le
\sup_{\tau\in[0,T]}
\|r^\tau\|_{\gamma-\varepsilon,\alpha+\varepsilon;\tau}.
\end{equation}
Similarly, if 
$$\sup_{\tau\in[0,T]}\|r^\tau\|_{1,\log;\tau,\alpha}+\sup_{\tau\in[0,T]}\|r^\tau\|_{1-\varepsilon,\alpha+\varepsilon;\tau}<\infty$$
for every $\varepsilon\in(0,1)$, then $R\in E_{\log}^{1}(\Delta_T^2; B_\alpha)$ and
\[
\|R\|_{1,\log;\alpha}\le\sup_{\tau\in[0,T]}\|r^\tau\|_{1,\log;\tau,\alpha}, \quad \|R\|_{1-\varepsilon,\alpha+\varepsilon}\le\sup_{\tau\in[0,T]}\|r^\tau\|_{1-\varepsilon,\alpha+\varepsilon;\tau}.
\]
\end{proposition}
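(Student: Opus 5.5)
The plan is to verify the three claims directly from the four properties in Definition~\ref{def:compatible-frozen-family}: continuity of $R$, the identity $\hat\delta R=h$, and the norm transfer. Continuity is immediate. The map $(t,s)\mapsto R_{ts}=r^t_{ts}$ is the composition of the continuous embedding $\Delta_T^2\to\mathfrak D_T^3$, $(t,s)\mapsto(t,t,s)$, with the map $(\tau,t,s)\mapsto r^\tau_{ts}$, which is continuous by property (b). Hence $R\in C(\Delta_T^2;B_\alpha)$.

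For the algebraic identity, fix $0\le s\le u\le t\le T$. I will evaluate property (c) at the terminal time $\tau=t$, which gives
\[
r^t_{ts}-r^t_{tu}-r^t_{us}=h^t_{tus}=S_{t-t}h_{tus}=h_{tus}.
\]
The first two terms are already $R_{ts}$ and $R_{tu}$. For the third, I apply compatibility (d) with $\tau'=u\le\tau=t$ and the pair $s\le u$, which gives $r^t_{us}=S_{t-u}r^u_{us}=S_{t-u}R_{us}$. Substituting yields
\[
R_{ts}-R_{tu}-S_{t-u}R_{us}=h_{tus},
\]
which is $(\hat\delta R)_{tus}=h_{tus}$ by \eqref{eq:delta_hat_2}.

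The estimates are pointwise and need no semigroup bound. For $s<t$ we have
\[
\frac{\|R_{ts}\|_{B_{\alpha+\varepsilon}}}{|t-s|^{\gamma-\varepsilon}}=\frac{\|r^t_{ts}\|_{B_{\alpha+\varepsilon}}}{|t-s|^{\gamma-\varepsilon}}\le\|r^t\|_{\gamma-\varepsilon,\alpha+\varepsilon;t}\le\sup_{\tau}\|r^\tau\|_{\gamma-\varepsilon,\alpha+\varepsilon;\tau},
\]
because $(t,s)\in\Delta_t^2$. Taking the supremum over $s<t$ gives \eqref{eq:reconstructed-subcritical-estimate} for each $\varepsilon$, and hence $R\in E^\gamma(\Delta_T^2;B_\alpha)$. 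The critical case follows by the same argument, applied once with the logarithmic weight $|t-s|(1+|\log|t-s||)$ in $B_\alpha$ and once with the weights $|t-s|^{1-\varepsilon}$ in $B_{\alpha+\varepsilon}$.

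There is no real obstacle here. The only point needing care is the third term of $\hat\delta R$: property (c) at $\tau=t$ involves $r^t_{us}$ rather than $R_{us}=r^u_{us}$, and the compatibility (d) is exactly what converts one into the other and produces the factor $S_{t-u}$.
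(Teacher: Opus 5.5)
Your proposal is correct and follows the paper's proof essentially line by line: continuity from (b), the identity $\hat\delta R=h$ by evaluating (c) at $\tau=t$ and converting $r^t_{us}$ into $S_{t-u}R_{us}$ via (d) with $\tau'=u$, and the bounds by evaluating the frozen seminorms at $\tau=t$. One fine point is left implicit in both your argument and the paper's. Since $C^{\gamma-\varepsilon}(\Delta_T^2;B_{\alpha+\varepsilon})$ is defined inside $C(\Delta_T^2;B_{\alpha+\varepsilon})$, concluding $R\in E^\gamma(\Delta_T^2;B_\alpha)$ also needs continuity of $R$ into each $B_{\alpha+\varepsilon}$, while (b) only gives it in $B_\alpha$. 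This can be recovered by writing $R_{ts}-R_{t_0s_0}=(\mathrm{Id}-S_{\tau-t})R_{ts}+(r^\tau_{ts}-r^\tau_{t_0s_0})+(S_{\tau-t_0}-\mathrm{Id})R_{t_0s_0}$ for $t\le\tau$ (using (d)). You then control the outer terms by the smoothing estimate with some $\varepsilon'\in(\varepsilon,\gamma)$ and the middle one by continuity of $r^\tau$ in $B_{\alpha+\varepsilon}$, and let $\tau\downarrow t_0$.
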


\begin{proof}
The continuity of $R$ on $\Delta_T^2$ follows from Definition~\ref{def:compatible-frozen-family} (b). Let $(t,u,s)\in\Delta_T^3$. Then
\begin{align*}
(\hat\delta R)_{tus}&=\ R_{ts}-R_{tu}-S_{t-u}R_{us} \\
&=\ r^t_{ts}-r^t_{tu}-S_{t-u}r^u_{us} \hspace{2cm}(\text{by $R_{ts}:=r^t_{ts}$})\\
&=\ r^t_{ts}-r^t_{tu}-r^t_{us} \hspace{2cm}(\text{by Definition~\ref{def:compatible-frozen-family} (d)})\\
&=\ (\delta r^t)_{tus} \\
&=\ h^t_{tus} \hspace{2cm}(\text{by Definition~\ref{def:compatible-frozen-family} (c)}).
\end{align*}
Since $h^t_{tus}=S_{t-t}h_{tus}=h_{tus}$, we obtain $\hat\delta R=h$.
Now suppose $$\sup_{\tau\in[0,T]}\|r^\tau\|_{\gamma-\varepsilon,\alpha+\varepsilon;\tau}<\infty.$$ 
Then for
$0\le s<t\le T$,
\[
\|R_{ts}\|_{B_{\alpha+\varepsilon}}
=
\|r^t_{ts}\|_{B_{\alpha+\varepsilon}}
\le
\Bigl(
\sup_{\tau\in[0,T]}
\|r^\tau\|_{\gamma-\varepsilon,\alpha+\varepsilon;\tau}
\Bigr)
|t-s|^{\gamma-\varepsilon}.
\]
Taking the supremum over $s<t$ proves \eqref{eq:reconstructed-subcritical-estimate}, hence $R\in E^{\gamma}(\Delta_T^2; B_\alpha)$.
The critical case is proved in the same way, by evaluating the corresponding frozen bounds at $\tau=t$.
\end{proof}

Combining the two directions gives the equivalence between semigroup increments and compatible frozen increment families.

\begin{theorem}\label{thm:frozen-equivalence}
Let $h\in Z\hat C(\Delta_T^3;B_\alpha)$. Then the map
\[
\mathcal T_h:\{R\in C(\Delta_T^2;B_\alpha) \mid \hat\delta R=h\}\to \{\text{compatible frozen increment families over }h\}
\]
defined by $\mathcal T_h(R):=(\Theta_\tau R)_{\tau\in[0,T]}$ is a bijection. Its inverse is given by
\begin{equation}
\label{eq:frozen-equivalence-inverse}
(r^\tau)_{\tau\in[0,T]}\mapsto R, \quad R_{ts}:=r^t_{ts}.
\end{equation}
In addition, if $\gamma\in(0,1)$ and $h\in Z\hat C^{\gamma}(\Delta_T^3;B_\alpha)$, then
$\mathcal T_h$ restricts to a bijection between $ \{R\in E^{\gamma}(\Delta_T^2; B_\alpha) \mid \hat\delta R=h\}$ and the compatible frozen increment families over $h$ satisfying
\begin{equation}
\label{eq:frozen-equivalence-subcritical-target}
\sup_{\tau\in[0,T]}\|r^\tau\|_{\gamma-\varepsilon,\alpha+\varepsilon;\tau}<\infty, \quad \forall \varepsilon\in(0, \gamma).
\end{equation}
Likewise, if $h\in Z\hat C^{1}(\Delta_T^3;B_\alpha)$, then $\mathcal T_h$ restricts to a bijection between $\{R\in E_{\log}^{1}(\Delta_T^2; B_\alpha) \mid \hat\delta R=h\}$ and the compatible frozen increment families satisfying
\begin{equation}
\label{eq:frozen-equivalence-critical-target}
\sup_{\tau\in[0,T]}\|r^\tau\|_{1,\log;\tau,\alpha}+\sup_{\tau\in[0,T]}\|r^\tau\|_{1-\varepsilon,\alpha+\varepsilon;\tau}<\infty, \quad \forall \varepsilon\in(0,1).
\end{equation}
\end{theorem}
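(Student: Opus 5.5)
The plan is to assemble the bijection from Propositions~\ref{prop:semigroup-to-frozen} and~\ref{prop:frozen-to-semigroup}, then check that the two constructions are mutually inverse. By Proposition~\ref{prop:semigroup-to-frozen}, $\mathcal T_h$ is well defined: for every $R$ with $\hat\delta R=h$, the family $(\Theta_\tau R)_\tau$ is a compatible frozen increment family over $h$. By Proposition~\ref{prop:frozen-to-semigroup}, the map $\mathcal S_h:(r^\tau)\mapsto R$ with $R_{ts}:=r^t_{ts}$ sends compatible families to continuous solutions of $\hat\delta R=h$. It therefore suffices to verify $\mathcal S_h\circ\mathcal T_h=\mathrm{id}$ and $\mathcal T_h\circ\mathcal S_h=\mathrm{id}$.

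The first identity is immediate from the definition of $\Theta_t$ and $S_0=\mathrm{Id}$:
\[
(\mathcal S_h\mathcal T_h R)_{ts}=(\Theta_t R)_{ts}=S_{t-t}R_{ts}=R_{ts}.
\]
For the second identity, take a compatible family $r$, set $R=\mathcal S_h r$, and fix $0\le s\le t\le\tau$. Applying Definition~\ref{def:compatible-frozen-family}~(d) with $\tau'=t$ gives
\[
(\Theta_\tau R)_{ts}=S_{\tau-t}r^t_{ts}=r^\tau_{ts}.
\]
This is the only point where compatibility is used in an essential way. It shows that a compatible family is entirely determined by its diagonal values $r^t_{ts}$, which gives injectivity of $\mathcal S_h$ and hence of $\mathcal T_h$.

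For the restricted statements, I will check that each direction preserves the regularity classes. If $R\in E^\gamma$ solves $\hat\delta R=h$, the uniform bound $\sup_\tau\|\Theta_\tau R\|_{\gamma-\varepsilon,\alpha+\varepsilon;\tau}\le M_{\alpha+\varepsilon,T}\|R\|_{\gamma-\varepsilon,\alpha+\varepsilon}$ from Proposition~\ref{prop:semigroup-to-frozen} gives \eqref{eq:frozen-equivalence-subcritical-target}. This requires $M_{\alpha+\varepsilon,T}<\infty$, which follows from the uniform boundedness of a strongly continuous semigroup on compact time intervals in each space of the scale. Conversely, Proposition~\ref{prop:frozen-to-semigroup} shows that a family satisfying \eqref{eq:frozen-equivalence-subcritical-target} yields $R\in E^\gamma$. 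Both maps therefore restrict, and since they are mutual inverses on the larger sets, the restrictions are mutually inverse bijections. The critical case with \eqref{eq:frozen-equivalence-critical-target} and $E^1_{\log}$ is handled identically, using the logarithmic estimates in the same two propositions.

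No step here is a real obstacle, since everything reduces to the two preceding propositions. The one point needing care is the continuity requirement (b) for $(\tau,t,s)\mapsto S_{\tau-t}R_{ts}$ on $\mathfrak D^3_T$. Strong continuity alone does not immediately give joint continuity. I will argue it by writing
\[
S_{\tau-t}R_{ts}-S_{\tau_0-t_0}R_{t_0s_0}=S_{\tau-t}\bigl(R_{ts}-R_{t_0s_0}\bigr)+\bigl(S_{\tau-t}-S_{\tau_0-t_0}\bigr)R_{t_0s_0}.
\]
The first term is controlled by the local bound $M_{\alpha,T}$ together with continuity of $R$, and the second by strong continuity applied to the fixed vector $R_{t_0s_0}$.
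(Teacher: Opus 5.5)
Your proposal is correct and follows essentially the same route as the paper: well-definedness of both maps comes from Propositions~\ref{prop:semigroup-to-frozen} and~\ref{prop:frozen-to-semigroup}, the two composition identities follow from $S_0=\mathrm{Id}$ and from compatibility (d) with $\tau'=t$, and the regularity restrictions are read off from the uniform estimates in those two propositions. Your explicit splitting argument for the joint continuity in (b) fills in a detail the paper states in one line; note also that injectivity of $\mathcal T_h$ comes from $\mathcal S_h\circ\mathcal T_h=\mathrm{id}$ rather than from the second identity, though with both identities established this does not affect the conclusion.
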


\begin{proof}
Proposition~\ref{prop:semigroup-to-frozen} shows that $\mathcal T_h$ is well defined. Proposition~\ref{prop:frozen-to-semigroup} shows that \eqref{eq:frozen-equivalence-inverse} is also well defined. It remains to check that they are inverse to each other. If $R\in C(\Delta_T^2;B_\alpha)$ with $\hat\delta R=h$, then
\[
(\mathcal T_h(R))^t_{ts}=(\Theta_t R)_{ts}=S_{t-t}R_{ts}=R_{ts},
\]
so the inverse map sends $\mathcal T_h(R)$ back to $R$. Conversely, let $(r^\tau)_{\tau\in[0,T]}$ be a compatible frozen increment family. Then, for $0\le s\le t\le \tau\le T$,
\[
(\Theta_\tau R)_{ts}=S_{\tau-t}R_{ts}=S_{\tau-t}r^t_{ts}=r^\tau_{ts},
\]
where the last equality follows from Definition~\ref{def:compatible-frozen-family} (d). Hence $\mathcal T_h(R)=(r^\tau)_{\tau\in[0,T]}$.
The low-regularity statements are immediate from the uniform estimates proved in
Propositions~\ref{prop:semigroup-to-frozen} and \ref{prop:frozen-to-semigroup}.
\end{proof}

Based on the equivalence established above, we arrive at the following reduction principle, which provides a clear operational path for our construction.

\begin{coro}\label{cor:reduction-principle}
Let $\gamma\in(0,1]$, $\alpha\in\mathbb{R}$, and $h\in Z\hat C^{\gamma}(\Delta_T^3; B_\alpha)$.
\begin{enumerate}
\item For every $\tau\in[0,T]$, the frozen defect $h^\tau$ defined in \eqref{eq:frozen-defect} satisfies $h^\tau\in ZC^{\gamma}(\Delta_\tau^3;B_\alpha)$. \label{it:reductia}

\item In order to construct a semigroup increment $R$ solving \eqref{eq:semigroup-sewing-eq}, it is enough to construct a compatible frozen increment family
$(r^\tau)_{\tau\in[0,T]}$ over $h$ such that the bounds in \eqref{eq:frozen-equivalence-subcritical-target} hold when $\gamma\in(0,1)$, or the bounds in \eqref{eq:frozen-equivalence-critical-target} hold when $\gamma=1$. \label{it:reductib}

\item Once such a family is available, the formula $ R_{ts}=r^t_{ts}$ defines a semigroup increment with
\[
R\in
\begin{cases}
E^{\gamma}(\Delta_T^2; B_\alpha), & 0<\gamma<1,\\[0.3em]
E_{\log}^{1}(\Delta_T^2; B_\alpha), & \gamma=1,
\end{cases}
\ \text{\, and\, }\ \hat\delta R=h.
\] \label{it:reductic}
\end{enumerate}
\end{coro}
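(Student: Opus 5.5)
The corollary is essentially a repackaging of the three preceding results, so the plan is to prove each item by direct citation.

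For item~(\ref{it:reductia}), fix $\tau\in[0,T]$ and apply Proposition~\ref{prop:frozen-regularity} to $h\in Z\hat C^{\gamma}(\Delta_T^3;B_\alpha)$. That proposition holds for all $\gamma\in(0,1]$. It gives $h^\tau=\Theta_\tau h\in ZC^{\gamma}(\Delta_\tau^3;B_\alpha)$, with $\|h^\tau\|_{\gamma,\alpha;\tau}\le M_{\alpha,T}\|h\|_{\gamma,\alpha}$. The cocycle property $\delta h^\tau=0$ comes from Corollary~\ref{cor:frozen-defects}, which uses the intertwining identity of Proposition~\ref{prop:frozen-intertwining} with $n=3$ together with $\hat\delta h=0$.

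Items~(\ref{it:reductib}) and~(\ref{it:reductic}) will be proved together. Suppose $(r^\tau)_{\tau\in[0,T]}$ is a compatible frozen increment family over $h$ in the sense of Definition~\ref{def:compatible-frozen-family}. Set $R_{ts}:=r^t_{ts}$.
\begin{itemize}
\item Proposition~\ref{prop:frozen-to-semigroup} gives $R\in C(\Delta_T^2;B_\alpha)$ and $\hat\delta R=h$. The underlying computation uses condition (d) at $\tau'=u$, $\tau=t$ to rewrite $S_{t-u}r^u_{us}=r^t_{us}$, then uses $\delta r^t=h^t$ and $h^t_{tus}=h_{tus}$.
\item If $0<\gamma<1$ and the uniform bounds \eqref{eq:frozen-equivalence-subcritical-target} hold, estimate \eqref{eq:reconstructed-subcritical-estimate} from the same proposition places $R$ in $E^{\gamma}(\Delta_T^2;B_\alpha)$.
\item If $\gamma=1$ and the bounds \eqref{eq:frozen-equivalence-critical-target} hold, the critical part of Proposition~\ref{prop:frozen-to-semigroup} places $R$ in $E^{1}_{\log}(\Delta_T^2;B_\alpha)$. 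This rests on evaluating the frozen bounds on the diagonal $\tau=t$.
\end{itemize}
This shows that such a family suffices to produce a semigroup increment, which is item~(\ref{it:reductib}), and it gives the explicit formula of item~(\ref{it:reductic}).

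The converse is not asked for, but Theorem~\ref{thm:frozen-equivalence} shows the reduction loses nothing: every semigroup increment in the target class arises from exactly one compatible frozen family with these bounds. So "it is enough" is in fact an equivalence.

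There is no real obstacle, since every step is a citation. The only point to check is that the hypotheses line up: $h\in Z\hat C^{\gamma}$ is needed only for item~(\ref{it:reductia}), while items~(\ref{it:reductib}) and~(\ref{it:reductic}) need only $h\in Z\hat C(\Delta_T^3;B_\alpha)$ together with the stated uniform frozen bounds. One should also confirm that the case split at $\gamma=1$ matches the two regularity regimes of Proposition~\ref{prop:frozen-to-semigroup}.
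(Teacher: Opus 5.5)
Your proposal is correct and follows the paper's own argument. The first item is Corollary~\ref{cor:frozen-defects} combined with Proposition~\ref{prop:frozen-regularity}. The second and third items come from the reconstruction direction of Theorem~\ref{thm:frozen-equivalence}, which is exactly Proposition~\ref{prop:frozen-to-semigroup} that you cite directly.
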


\begin{proof}
Item~(\ref{it:reductia}) is Corollary~\ref{cor:frozen-defects} together with Proposition~\ref{prop:frozen-regularity}. Items~(\ref{it:reductib}) and~(\ref{it:reductic}) are direct consequences of Theorem~\ref{thm:frozen-equivalence}.
\end{proof}

\begin{remark}
When \(S_t=\mathrm{Id}\), one has \(a=0\) and hence \(\hat\delta=\delta\).  Moreover, the frozen transform \(\Theta_\tau\) is simply the restriction to \(\Delta_\tau\).  Thus the preceding reduction becomes the ordinary low-regularity Sewing problem on \([0,\tau]\).  With the same dyadic normalization, the construction reduces to the normalized ordinary sewing construction of Broux--Zambotti~\cite{BZ22}.
\end{remark}

\section{Localized ordinary Sewing under dyadic normalization}\label{sec:dyadic-frozen}
After the terminal time $\tau$ is frozen, the semigroup equation $\hat\delta R=h$ becomes an ordinary Sewing problem on the simplex $\Delta_\tau$.  In this section we fix a dyadic ordinary Sewing procedure for such frozen problems in the regime $\gamma\in(0,1]$.

We begin by fixing the dyadic meshes on which the ordinary low-regularity Sewing construction will be normalized. For $\tau\in[0,T]$ and $n\in\mathbb N$, we set
\[
\mathcal D_n^\tau:=\{k2^{-n}\tau \mid k=0,\ldots,2^n\}, \quad \mathcal D^\tau:=\bigcup_{n\ge 0}\mathcal D_n^\tau.
\]
The dyadic grids above are not yet an analytical object; rather, they identify the normalization that will be used throughout the section. Once the normalization is
fixed, the ordinary Sewing map becomes linear, and this linearity will be essential when we return to semigroup defects. 
Let $\tau\in[0,T]$, $\beta\in\mathbb R$, and $\gamma\in(0,1]$. We set
\[
\mathcal A^{\gamma}(\Delta_\tau^2;B_\beta):=\{A\in C(\Delta_\tau^2;B_\beta) \mid \delta A\in C^{\gamma}(\Delta_\tau^3;B_\beta)\}.
\]
The previous definition only records the natural domain of the ordinary Sewing problem. The next statement is the corresponding localized dyadic Sewing theorem
on the interval $[0,\tau]$.

\begin{lemma}\label{prop:localized-ordinary-dyadic}
Let $\tau\in[0,T]$, $\beta\in\mathbb R$, and $\gamma\in(0,1]$. Then there exists a linear map
\[
\mathcal I_\tau^{\mathrm d}: \mathcal A^{\gamma}(\Delta_\tau^2;B_\beta) \to C([0,\tau];B_\beta),
\]
obtained from a dyadic recursion on $\mathcal D^\tau$ and continuous extension to $[0,\tau]$, such that, for every $A\in \mathcal A^{\gamma}(\Delta_\tau^2;B_\beta)$, $\mathcal I_\tau^{\mathrm d}(A)_0=0$. If we define the dyadic remainder $\mathcal R_\tau^{\mathrm d}(A):=A-\delta\mathcal I_\tau^{\mathrm d}(A)$, then
$
\delta \mathcal R_\tau^{\mathrm d}(A)=\delta A.
$
Moreover, if $\gamma\in(0,1)$, then
\begin{equation}
\label{eq:localized-dyadic-subcritical-estimate}
\mathcal R_\tau^{\mathrm d}(A)\in C^{\gamma}(\Delta_\tau^2;B_\beta)\ \text{\, and \, }\ \|\mathcal R_\tau^{\mathrm d}(A)\|_{\gamma,\beta;\tau}\le C_\gamma\|\delta A\|_{\gamma,\beta;\tau};
\end{equation}
if $\gamma=1$, then
\begin{equation}
\label{eq:localized-dyadic-critical-estimate}
\mathcal R_\tau^{\mathrm d}(A)\in C_{\log}^{1}(\Delta_\tau^2;B_\beta)\ \text{\, and \, }\ \|\mathcal R_\tau^{\mathrm d}(A)\|_{1,\log;\tau,\beta}\le C_{T}\|\delta A\|_{1,\beta;\tau}.
\end{equation}
while, for every $\varepsilon\in(0,1)$, $\|\mathcal R_\tau^{\mathrm d}(A)\|_{1-\varepsilon,\beta;\tau}\le C_{\varepsilon,T}\|\delta A\|_{1,\beta;\tau}$. Here the constants can be chosen independently of $\tau\in[0,T]$.
\end{lemma}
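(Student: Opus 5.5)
Essentially, this is the Broux--Zambotti low-regularity sewing construction \cite{BZ22}, run on the dyadic grid $\mathcal D^\tau$, together with a scaling argument. The scaling argument shows that the constants do not depend on $\tau$.

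\textbf{Step 1: definition of $\mathcal I^{\mathrm d}_\tau$ on dyadic points.} Set $I_0:=0$ and $I_\tau:=A_{\tau 0}$. If $I$ is already defined on $\mathcal D_n^\tau$, define it on a new midpoint $m=(t+s)/2$ of a level-$n$ interval $[s,t]$ by
\[
I_m:=I_s+A_{ms}+\tfrac12\bigl(I_t-I_s-A_{ts}+A_{tm}+A_{ms}\bigr)-A_{ms}.
\]
Equivalently, the level-$n$ remainder $R_{ts}=A_{ts}-(I_t-I_s)$ is split as follows. The defect $(\delta A)_{tms}$ is distributed equally between the two children, so that $R_{ms}=R_{tm}=\tfrac12\bigl(R_{ts}-(\delta A)_{tms}\bigr)$. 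This choice is linear in $A$ and forces $I_0=0$.

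By induction, the remainder on a level-$n$ dyadic interval $[s,t]$ equals
\[
2^{-n}A_{\tau0}\;-\;\sum_{k<n}2^{-(n-k)}(\delta A)_{\text{ancestor at level }k}.
\]
The first term is harmless, because it is of order $|t-s|$ times a bounded quantity. Each level-$k$ defect is bounded by $\|\delta A\|_{\gamma}(2^{-k}\tau)^{\gamma}$. Summing $\sum_{k<n}2^{-(n-k)}2^{-k\gamma}\tau^\gamma$ gives $C_\gamma(2^{-n}\tau)^\gamma$ when $\gamma<1$, and gives $n\,2^{-n}\tau$ when $\gamma=1$. The factor $n$ is the logarithm $\log(\tau/|t-s|)$.

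There is one issue with the $A_{\tau0}$ term: it is bounded only by $\|A_{\tau0}\|$, not by $\delta A$. To obtain estimates in terms of $\|\delta A\|$ alone, I must normalize $I_\tau$ appropriately rather than setting $I_\tau=A_{\tau 0}$. Alternatively, I would absorb this piece: it contributes $2^{-n}A_{\tau0}$, an increment that is linear in time. Since it is additive, it can be moved into $I$, which amounts to redefining $I_\tau:=A_{\tau0}$ and accepting that $R$ on the top interval is $0$. I would adopt this convention, so that the top-level remainder vanishes.

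\textbf{Step 2: extension to arbitrary $s<t$.} For general $s<t$ in $\mathcal D^\tau$, decompose $[s,t]$ into maximal dyadic intervals. There are at most two per level, starting from the level $n_0$ with $2^{-n_0}\tau\approx|t-s|$. Then write
\[
R_{ts}=\sum_i R_{t_{i+1}t_i}+\text{(telescoping defects)},
\]
where the defects are of the form $(\delta A)$ evaluated at consecutive dyadic breakpoints. The first sum is geometric, hence bounded by $C|t-s|^\gamma$; in the case $\gamma=1$ it is bounded by $|t-s|\log$. The defects are bounded using the mixed-exponent $\|\cdot\|_{\rho,\gamma-\rho}$ decomposition, where geometric summation again applies because $\rho>0$ and $\gamma-\rho>0$.

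This yields a uniform $\gamma$-H\"older bound for $R$ on dyadic points. Since $A$ is continuous, $I_t-I_s=A_{ts}-R_{ts}$ is uniformly continuous on $\mathcal D^\tau$, and therefore extends continuously to $[0,\tau]$. The identity $\delta R=\delta A$ is immediate, because $\delta\delta I=0$.

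\textbf{Main obstacle: the critical case $\gamma=1$.} The delicate step is the bound for $\gamma=1$, specifically controlling the log from the deepest level. The subcritical estimate $\|R\|_{1-\varepsilon}\le C_{\varepsilon,T}\|\delta A\|_1$ then follows from
\[
|t-s|(1+|\log|t-s||)\le C_{\varepsilon,T}|t-s|^{1-\varepsilon}.
\]

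\textbf{Independence of the constants from $\tau$.} By rescaling time to $[0,1]$ via $t\mapsto t/\tau$, the H\"older seminorms scale by the same factor $\tau^{\gamma}$ on both sides, so the constants are $\tau$-independent. The exception is the log term, where $\log(\tau/|t-s|)\le|\log|t-s||+\log^+T$. This produces the stated $T$-dependence of the constant $C_T$ in the critical case.
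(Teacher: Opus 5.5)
Your construction is the right one, and in substance it is exactly what the paper delegates to \cite[Theorem~3.2]{BZ22}. The ingredients are: equal splitting of the residual at each midpoint with $I_0=0$ and $I_\tau=A_{\tau0}$ (so $R_{\tau0}=0$); the ancestor formula $R=-\sum_{k<n}2^{-(n-k)}(\delta A)_{\mathrm{anc}(k)}$ on level-$n$ intervals; and the resulting bounds $C_\gamma(2^{-n}\tau)^\gamma$, respectively $n2^{-n}\tau$. The paper gives no more detail than the citation. Your write-up, however, contains one error and one genuine gap.

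\textbf{The error.} Your displayed formula simplifies to $I_m=\tfrac12(I_s+I_t)-\tfrac12(\delta A)_{tms}$. That choice gives $R_{ms}=\tfrac12R_{ts}+\tfrac12(A_{ms}-A_{tm})$, which is not controlled by $\delta A$. For $A=\delta f$ you get $R_{\tau/2,0}=f_{\tau/2}-\tfrac12(f_0+f_\tau)\neq0$ even though $\delta A=0$. The split you describe in words is
\[
I_m=\tfrac12(I_s+I_t)+\tfrac12(A_{ms}-A_{tm}).
\]
Relatedly, the term $2^{-n}A_{\tau0}$ only arises if $I_\tau=0$. With $I_\tau=A_{\tau0}$, which you both start and end with, it is simply absent, so no renormalization discussion is needed.

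\textbf{The gap is in Step 2.} The seminorm $\|\delta A\|_{\gamma,\beta;\tau}$ is an infimum over decompositions $\delta A=\sum_j h_j$ with arbitrary $\rho_j\in(0,\gamma)$, so every constant must be uniform in $\rho$. Your justification ``geometric summation applies because $\rho>0$ and $\gamma-\rho>0$'' produces factors $(1-2^{-\rho})^{-1}$ or $(1-2^{-(\gamma-\rho)})^{-1}$. This happens if you telescope the maximal dyadic pieces in one direction: the long leg of each defect triple stays of size $|t-s|$ while the short legs shrink geometrically. Such constants blow up as $\rho\to0$ or $\rho\to\gamma$, so you do not obtain $C_\gamma\|\delta A\|_{\gamma,\beta;\tau}$.

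The fix is to telescope around the unique lowest-level dyadic point $c\in[s,t]$:
\begin{itemize}
\item On $[c,t]$, with pieces $[u_i,u_{i+1}]$ of strictly decreasing length, use $R_{tu_i}=R_{tu_{i+1}}+R_{u_{i+1}u_i}+(\delta A)_{tu_{i+1}u_i}$, where $t-u_{i+1}<u_{i+1}-u_i$.
\item On $[s,c]$, with pieces $[v_i,v_{i+1}]$ of strictly increasing length, use $R_{v_{i+1}s}=R_{v_{i+1}v_i}+R_{v_is}+(\delta A)_{v_{i+1}v_is}$, where $v_i-s<v_{i+1}-v_i$.
\item Finally, $R_{ts}=R_{tc}+R_{cs}+(\delta A)_{tcs}$.
\end{itemize}
Every defect triple then has both legs bounded by the length $|I|$ of a single piece, or by $|t-s|$. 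Hence each $h_j$ contributes at most $\|h_j\|_{\rho_j,\gamma-\rho_j}|I|^\gamma$, uniformly in $\rho_j$. Summing over at most one piece per level on each side gives $C_\gamma|t-s|^\gamma$, or $2|t-s|$ when $\gamma=1$.

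\textbf{A smaller omission.} Uniform continuity of $I$ on $\mathcal D^\tau$ requires $A_{ts}\to0$ as $|t-s|\to0$, and continuity of $A$ alone does not give this. It follows from $A_{tt}=-(\delta A)_{ttt}=0$, which in turn comes from the $C^\gamma$ bound on strict triples together with continuity of $\delta A$.
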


\begin{proof}
The existence of the dyadic-normalized correction $\mathcal R_\tau^d(A)$ satisfying the required estimates follows directly from the normalized ordinary low-regularity Sewing lemma; see \cite[Theorem~3.2]{BZ22}.  Moreover, the construction preserves the defect in the sense that
\[
\delta \mathcal R_\tau^{d}(A)=\delta A,
\]
which follows from the normalization construction and the identity \(\delta^2=0\).

The constants in the subcritical estimate depend only on \(\gamma\).  In the critical case the logarithmic bound is taken uniformly for \(\tau\le T\), which gives the constant \(C_{T}\), and the same bound implies the stated \(C^{1-\varepsilon}\) estimate.  The argument of \cite[Theorem~3.2]{BZ22} is unchanged for \(B_\beta\)-valued increments, since the dyadic recursion uses only linearity and the triangle inequality in the target Banach space.
\end{proof}

From now on, the notation $\mathcal I_\tau^{\mathrm d}$ and $\mathcal R_\tau^{\mathrm d}$ will refer to one fixed choice of the dyadic operators provided by Lemma~\ref{prop:localized-ordinary-dyadic}. This is the first place where a normalization enters the argument. To apply Lemma~\ref{prop:localized-ordinary-dyadic} to semigroup defects, we first freeze the semigroup defect and choose a two increment whose ordinary coboundary equals the resulting ordinary three cocycle. We then apply the dyadic Sewing operator to this chosen two-increment.  Let $\tau\in[0,T]$, $\beta\in\mathbb R$, and let $h\in ZC(\Delta^3_\tau;B_\beta)$. We define
\begin{equation}
\label{eq:canonical-frozen-primitive}
b^h_{ts}:=-h_{ts0}, \quad 0\le s\le t\le \tau.
\end{equation}
The formula above gives a two-increment whose ordinary coboundary is $h$. We recall it here because this two increment is used as the input to the dyadic Sewing procedure.

\begin{lemma}\label{lem:canonical-frozen-primitive}
Let $\tau\in[0,T]$, $\beta\in\mathbb R$, and $h\in ZC(\Delta^3_\tau;B_\beta)$. Then $\delta b^h = h$. Consequently, if $h\in ZC^{\gamma}(\Delta^3_\tau;B_\beta)$ for some $\gamma\in(0,1]$, then $b^h\in \mathcal A^{\gamma}(\Delta_\tau^2;B_\beta)$.
\end{lemma}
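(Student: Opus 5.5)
The argument reuses the computation from the proof of Corollary~\ref{cor:frozen-defects}, now for an arbitrary ordinary cocycle $h\in ZC(\Delta^3_\tau;B_\beta)$ rather than one of the form $\Theta_\tau h$. Fix $0\le s\le u\le t\le\tau$. By the definition \eqref{eq:canonical-frozen-primitive} and \eqref{eq:delta-def},
\[
(\delta b^h)_{tus}=b^h_{ts}-b^h_{tu}-b^h_{us}=-h_{ts0}+h_{tu0}+h_{us0}.
\]
Since $(t,u,s,0)\in\Delta_\tau^4$ and $\delta h=0$, the four-point coboundary gives
\[
0=(\delta h)_{tus0}=-h_{us0}+h_{ts0}-h_{tu0}+h_{tus}.
\]
Rearranging the second identity yields $h_{tus}=h_{us0}-h_{ts0}+h_{tu0}$, which is exactly the right-hand side of the first. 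Hence $\delta b^h=h$ on $\Delta^3_\tau$.

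Continuity of $b^h$ on $\Delta^2_\tau$ follows from continuity of $h$ on $\Delta^3_\tau$, because $(t,s)\mapsto(t,s,0)$ is a continuous map $\Delta^2_\tau\to\Delta^3_\tau$. So $b^h\in C(\Delta^2_\tau;B_\beta)$.

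For the consequence, suppose $h\in ZC^{\gamma}(\Delta^3_\tau;B_\beta)$. Then $\delta b^h=h$ lies in $C^{\gamma}(\Delta^3_\tau;B_\beta)$, which is precisely the membership condition defining $\mathcal A^{\gamma}(\Delta_\tau^2;B_\beta)$. We also get the identity $\|\delta b^h\|_{\gamma,\beta;\tau}=\|h\|_{\gamma,\beta;\tau}$.

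The only point needing care is the sign bookkeeping in the four-point coboundary. Since everything is purely algebraic, no step poses a real obstacle.
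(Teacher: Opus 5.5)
Your proposal is correct and follows the paper's own argument: you evaluate the cocycle identity $(\delta h)_{tus0}=0$ and rearrange it to get exactly $(\delta b^h)_{tus}$, then read off membership in $\mathcal A^{\gamma}(\Delta_\tau^2;B_\beta)$ from $\delta b^h=h$. Your explicit check that $b^h$ is continuous, via the continuous map $(t,s)\mapsto(t,s,0)$, is a small detail the paper leaves implicit.
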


\begin{proof}
Since $h\in ZC(\Delta^3_\tau;B_\beta)$, one has
\[
0=(\delta h)_{tus0}=-h_{us0}+h_{ts0}-h_{tu0}+h_{tus}.
\]
Rearranging this identity gives
\[
h_{us0}-h_{ts0}+h_{tu0}=h_{tus},
\]
which is exactly $\delta b^h = h$.
Assume that $h\in ZC^{\gamma}(\Delta^3_\tau;B_\beta)$. By the definition of $A^{\gamma}(\Delta_\tau^2;B_\beta)$ and $\delta b^h = h$, we have $b^h\in \mathcal A^{\gamma}(\Delta_\tau^2;B_\beta)$.
\end{proof}

We next combine the two increment $b^h$ with the localized dyadic Sewing operator. Let $\tau\in[0,T]$, $\beta\in\mathbb R$ and $\gamma\in(0,1]$. For
$h\in ZC^{\gamma}(\Delta^3_\tau;B_\beta)$, we define
\[
\Lambda_\tau^{\mathrm d}h:=\mathcal R_\tau^{\mathrm d}(b^h)=b^h-\delta\mathcal I_\tau^{\mathrm d}(b^h).
\]

\begin{proposition}\label{thm:frozen-dyadic-sewing}
Let $\tau\in[0,T]$ and $\beta\in\mathbb R$. If $\gamma\in(0,1)$, then
\[
\Lambda_\tau^{\mathrm d}: ZC^{\gamma}(\Delta^3_\tau;B_\beta)\to C^{\gamma}(\Delta^2_\tau;B_\beta)
\]
is linear, satisfies $\delta\Lambda_\tau^{\mathrm d}h = h$ and obeys the estimate $\|\Lambda_\tau^{\mathrm d}h\|_{\gamma,\beta;\tau}\le C_\gamma\|h\|_{\gamma,\beta;\tau}$. 
If $\gamma=1$, then
\[
\Lambda_\tau^{\mathrm d}: ZC^{1}(\Delta^3_\tau;B_\beta)\to C^{1}_{\mathrm{log}}(\Delta^2_\tau;B_\beta)
\]
is linear, satisfies $\delta\Lambda_\tau^{\mathrm d}h = h$ and obeys the estimates $\|\Lambda_\tau^{\mathrm d}h\|_{1,\log;\tau,\beta} \le C_{1,T}\|h\|_{1,\beta;\tau}$, as well as, for every $\varepsilon\in(0,1)$,  $\|\Lambda_\tau^{\mathrm d}h\|_{1-\varepsilon,\beta;\tau}\le C_{\varepsilon,T}\|h\|_{1,\beta;\tau}$.
\end{proposition}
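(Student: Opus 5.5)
The proof is essentially a composition of Lemma~\ref{lem:canonical-frozen-primitive} with Lemma~\ref{prop:localized-ordinary-dyadic}. I would organize it in three steps: linearity, the defect identity, and the estimates.

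\emph{Linearity.} The map $h\mapsto b^h$, $b^h_{ts}=-h_{ts0}$, is evaluation at $s=0$ with a sign change, so it is linear. By Lemma~\ref{lem:canonical-frozen-primitive} it maps $ZC^{\gamma}(\Delta^3_\tau;B_\beta)$ into $\mathcal A^{\gamma}(\Delta_\tau^2;B_\beta)$. The operator $\mathcal I_\tau^{\mathrm d}$ is linear by Lemma~\ref{prop:localized-ordinary-dyadic}, and $\delta$ is linear, so $\mathcal R_\tau^{\mathrm d}=\mathrm{Id}-\delta\mathcal I_\tau^{\mathrm d}$ is linear on $\mathcal A^\gamma$. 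Hence $\Lambda_\tau^{\mathrm d}=\mathcal R_\tau^{\mathrm d}\circ(h\mapsto b^h)$ is linear.

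\emph{Defect identity.} Using $\delta^2=0$ and Lemma~\ref{lem:canonical-frozen-primitive},
\[
\delta\Lambda_\tau^{\mathrm d}h=\delta b^h-\delta\delta\mathcal I_\tau^{\mathrm d}(b^h)=\delta b^h=h .
\]
This is also the identity $\delta\mathcal R_\tau^{\mathrm d}(A)=\delta A$ recorded in Lemma~\ref{prop:localized-ordinary-dyadic}, applied with $A=b^h$.

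\emph{Estimates.} For $\gamma\in(0,1)$, estimate \eqref{eq:localized-dyadic-subcritical-estimate} with $A=b^h$ gives
\[
\|\Lambda_\tau^{\mathrm d}h\|_{\gamma,\beta;\tau}\le C_\gamma\|\delta b^h\|_{\gamma,\beta;\tau}=C_\gamma\|h\|_{\gamma,\beta;\tau}.
\]
For $\gamma=1$, estimate \eqref{eq:localized-dyadic-critical-estimate} together with the accompanying $C^{1-\varepsilon}$ bound gives the logarithmic estimate with constant $C_{1,T}:=C_T$ and the $C^{1-\varepsilon}$ estimate with constant $C_{\varepsilon,T}$. In both cases the constants are independent of $\tau$, as Lemma~\ref{prop:localized-ordinary-dyadic} states.

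\emph{Main obstacle.} The only real point is checking that the norm of $\Lambda_\tau^{\mathrm d}h$ is controlled by $\|h\|$ and not by any norm of $b^h$ itself. This holds because the estimates in Lemma~\ref{prop:localized-ordinary-dyadic} depend only on $\delta A$, never on $A$, and $\delta b^h=h$ exactly. Note that $b^h$ need not have any H\"older regularity, since $h_{ts0}$ is not small as $t-s\to0$. That is why the estimate must pass through $\delta A$ rather than through a bound on $A$.

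A minor point is that $\mathcal I_\tau^{\mathrm d}(b^h)\in C([0,\tau];B_\beta)$ is a path, so $\delta$ of it is a continuous two-increment. This gives continuity of $\Lambda_\tau^{\mathrm d}h$ on $\Delta_\tau^2$, so $\Lambda_\tau^{\mathrm d}h$ lies in the stated target spaces $C^\gamma(\Delta_\tau^2;B_\beta)$, respectively $C^1_{\log}(\Delta_\tau^2;B_\beta)$.
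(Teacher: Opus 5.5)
Your proposal is correct and follows the paper's own proof: it uses linearity of $h\mapsto b^h$ and of $\mathcal R_\tau^{\mathrm d}$, gets the defect identity from Lemma~\ref{lem:canonical-frozen-primitive} together with $\delta\mathcal R_\tau^{\mathrm d}(A)=\delta A$, and reads the estimates off Lemma~\ref{prop:localized-ordinary-dyadic} with $A=b^h$ and $\delta b^h=h$. One side remark is inaccurate but does not affect the argument. For a decomposition $h=\sum_j h_j$ one has $\|h_{j,ts0}\|_{B_\beta}\le\|h_j\|_{\rho_j,\gamma-\rho_j;\beta}|t-s|^{\rho_j}s^{\gamma-\rho_j}$, so $h_{ts0}$ does tend to $0$ as $t-s\to0$. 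Hence $b^h$ is H\"older of order $\min_j\rho_j$, just not of order $\gamma$ in general, and this is still why the bound has to go through $\delta A$.
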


\begin{proof}
The map $h\mapsto b^h$ is linear by definition, and the map $A\mapsto \mathcal R_\tau^{\mathrm d}(A)$ is linear by Lemma~\ref{prop:localized-ordinary-dyadic}. Therefore $\Lambda_\tau^{\mathrm d}$ is linear as well.
Next, let $\gamma\in(0,1]$ and $h\in ZC^{\gamma}(\Delta^3_\tau;B_\beta)$. By Lemma~\ref{lem:canonical-frozen-primitive}, $b^h\in\mathcal A^{\gamma}(\Delta_\tau^2;B_\beta)$ and $\delta b^h = h$. Applying Lemma~\ref{prop:localized-ordinary-dyadic} with $A=b^h$, we obtain
\[
\delta\Lambda_\tau^{\mathrm d}h=\delta\mathcal R_\tau^{\mathrm d}(b^h)=\delta b^h=h.
\]
The estimates follows immediately from \eqref{eq:localized-dyadic-subcritical-estimate}, \eqref{eq:localized-dyadic-critical-estimate} and the identity $\delta b^h=h$.
\end{proof}

We now return to the semigroup setting. By Corollary~\ref{cor:reduction-principle}, a semigroup defect $h\in Z\hat C^{\gamma}(\Delta^3_T;B_\alpha)$ gives rise, for every terminal time $\tau$, to an ordinary cocycle $h^\tau := \Theta_\tau h$. Proposition~\ref{thm:frozen-dyadic-sewing} can therefore be applied simplex by simplex.

\begin{definition}\label{def:dyadic-frozen-defect-family}
Let $\gamma\in(0,1]$, $\alpha\in\mathbb R$, and $h\in Z\hat C^{\gamma}(\Delta^3_T;B_\alpha)$. For each $\tau\in[0,T]$, set $h^\tau:=\Theta_\tau h$ and define $r_{\mathrm d}^\tau(h):=\Lambda_\tau^{\mathrm d}(h^\tau)$. The family $r_{\mathrm d}(h):=\bigl(r_{\mathrm d}^\tau(h)\bigr)_{\tau\in[0,T]}$ is called the {\bf dyadic frozen increment family} associated with $h$.
\end{definition}

The next result records the simplexwise defect identity and regularity of$r_{\mathrm d}(h)$, without asserting terminal compatibility.

\begin{theorem}\label{prop:dyadic-frozen-defects}
Let $\gamma\in(0,1]$, $\alpha\in\mathbb R$ and $h\in Z\hat C^{\gamma}(\Delta^3_T;B_\alpha)$.  Then, for every $\tau\in[0,T]$, if $\gamma\in(0,1)$, one has
\[
r_{\mathrm d}^\tau(h)\in C^{\gamma}(\Delta^2_\tau;B_\alpha), \quad \delta r_{\mathrm d}^\tau(h)=h^\tau
\]
and the uniform estimate $\sup_{\tau\in[0,T]}\|r_{\mathrm d}^\tau(h)\|_{\gamma,\alpha;\tau}\le C_\gamma M_{\alpha,T}\|h\|_{\gamma,\alpha}$; if $\gamma=1$, one has
\[
r_{\mathrm d}^\tau(h)\in C^{1}_{\mathrm{log}}(\Delta^2_\tau;B_\alpha), \quad \delta r_{\mathrm d}^\tau(h)=h^\tau
\]
and the uniform estimates $$\sup_{\tau\in[0,T]}\|r_{\mathrm d}^\tau(h)\|_{1,\log;\tau,\alpha}\le C_{1,T} M_{\alpha,T}\|h\|_{1,\alpha},$$ 
as well as, for every $\varepsilon\in(0,1)$, $$\sup_{\tau\in[0,T]}\|r_{\mathrm d}^\tau(h)\|_{1-\varepsilon,\alpha;\tau}\le C_{\varepsilon,T} M_{\alpha,T}\|h\|_{1,\alpha}.$$
\end{theorem}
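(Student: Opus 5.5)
The plan is to combine three earlier results simplex by simplex: the frozen regularity bound of Proposition~\ref{prop:frozen-regularity}, the frozen dyadic sewing estimates of Proposition~\ref{thm:frozen-dyadic-sewing} with $\beta=\alpha$, and the uniformity in $\tau$ of the constants in Lemma~\ref{prop:localized-ordinary-dyadic}.

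First fix $\tau\in[0,T]$. By Corollary~\ref{cor:reduction-principle}(\ref{it:reductia}), or directly by Corollary~\ref{cor:frozen-defects} together with Proposition~\ref{prop:frozen-regularity}, the frozen defect satisfies $h^\tau=\Theta_\tau h\in ZC^{\gamma}(\Delta_\tau^3;B_\alpha)$. It also obeys $\|h^\tau\|_{\gamma,\alpha;\tau}\le M_{\alpha,T}\|h\|_{\gamma,\alpha}$. So $h^\tau$ lies in the domain of $\Lambda_\tau^{\mathrm d}$, and $r^\tau_{\mathrm d}(h)=\Lambda^{\mathrm d}_\tau(h^\tau)$ is well defined.

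Proposition~\ref{thm:frozen-dyadic-sewing} then gives $\delta r^\tau_{\mathrm d}(h)=\delta\Lambda^{\mathrm d}_\tau h^\tau=h^\tau$ in both regimes. It also gives the membership $r^\tau_{\mathrm d}(h)\in C^\gamma(\Delta_\tau^2;B_\alpha)$ when $0<\gamma<1$, and $r^\tau_{\mathrm d}(h)\in C^1_{\log}(\Delta_\tau^2;B_\alpha)$ when $\gamma=1$.

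The estimates follow by chaining the two bounds. For $\gamma\in(0,1)$ we get
\[
\|r^\tau_{\mathrm d}(h)\|_{\gamma,\alpha;\tau}\le C_\gamma\|h^\tau\|_{\gamma,\alpha;\tau}\le C_\gamma M_{\alpha,T}\|h\|_{\gamma,\alpha}.
\]
For $\gamma=1$ the same chaining, using the constants $C_{1,T}$ and $C_{\varepsilon,T}$ of Proposition~\ref{thm:frozen-dyadic-sewing}, gives the logarithmic bound and the $C^{1-\varepsilon}$ bound, each with right-hand side a constant times $M_{\alpha,T}\|h\|_{1,\alpha}$.

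To pass to the supremum over $\tau$, I need the right-hand sides to be independent of $\tau$. The factor $M_{\alpha,T}$ is a supremum over $r\in[0,T]$ of $\|S_r\|$, so it does not depend on $\tau$. The sewing constants are independent of $\tau$ by the last sentence of Lemma~\ref{prop:localized-ordinary-dyadic}. Taking $\sup_{\tau\in[0,T]}$ then yields the stated uniform estimates.

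The degenerate case $\tau=0$ is trivial, since $\Delta_0^2$ is a single point and every norm involved vanishes.

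The only delicate point is the uniformity of the dyadic constants in $\tau$, especially the logarithmic constant $C_{1,T}$ at $\gamma=1$. This uniformity is needed because the dyadic grid $\mathcal D^\tau$ rescales with $\tau$, and the quantity $1+|\log|t-s||$ is not scale invariant. I would take this uniformity directly from Lemma~\ref{prop:localized-ordinary-dyadic}. If a justification were needed, I would note that for $|t-s|\le\tau\le T$ the loss from rescaling is bounded by a constant depending only on $T$.
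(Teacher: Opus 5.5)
Your proposal is correct and follows essentially the same route as the paper: freeze via Proposition~\ref{prop:frozen-regularity} to get $h^\tau\in ZC^{\gamma}(\Delta_\tau^3;B_\alpha)$ with $\|h^\tau\|_{\gamma,\alpha;\tau}\le M_{\alpha,T}\|h\|_{\gamma,\alpha}$, then apply Proposition~\ref{thm:frozen-dyadic-sewing} to $h^\tau$ and chain the two bounds. The paper leaves two points implicit, and you make them explicit: the $\tau$-independence of the sewing constants, including that the logarithmic rescaling loss is controlled by $T$, and the degenerate case $\tau=0$.
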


\begin{proof}
Fix $\tau\in[0,T]$. By Proposition~\ref{prop:frozen-regularity}, we have
\begin{equation}
\label{eq:dyadic-frozen-defect-proof-regularity}
h^\tau\in ZC^{\gamma}(\Delta^3_\tau;B_\alpha), \quad \|h^\tau\|_{\gamma,\alpha;\tau}\le M_{\alpha,T}\|h\|_{\gamma,\alpha}.
\end{equation}
Let $\gamma\in(0,1)$, we apply
Proposition~\ref{thm:frozen-dyadic-sewing} to $h^\tau$. This yields
\[
r_{\mathrm d}^\tau(h)=\Lambda_\tau^{\mathrm d}(h^\tau)\in C^{\gamma}(\Delta^2_\tau;B_\alpha), \quad \delta r_{\mathrm d}^\tau(h)=h^\tau.
\]
Moreover,
\[
\|r_{\mathrm d}^\tau(h)\|_{\gamma,\alpha;\tau}\le C_\gamma \|h^\tau\|_{\gamma,\alpha;\tau}\overset{(\ref{eq:dyadic-frozen-defect-proof-regularity})}{\le} C_\gamma M_{\alpha,T}\|h\|_{\gamma,\alpha}.
\]
The proof for case $\gamma=1$ is similar to this one.
\end{proof}

\begin{remark}\label{rem:missing-compatibility}
The family $r_{\mathrm d}(h)$ is only a simplexwise construction: for each fixed
$\tau$ it solves the ordinary Sewing problem on $\Delta_\tau$.  It need not
satisfy the transport compatibility condition in
Definition~\ref{def:compatible-frozen-family}, since the dyadic grids depend on
$\tau$.  This missing compatibility will be treated after passing to quotient
classes.
\end{remark}

\section{Quotient semigroup Sewing}\label{sec:quotient-semigroup-sewing}
The dyadic frozen increments constructed in Section~\ref{sec:dyadic-frozen} solve the ordinary Sewing problem on each fixed simplex, but they need not be compatible as actual increments in the terminal parameter.  In this section we show that they nevertheless define canonical compatible quotient objects.  We treat first the subcritical regime \(0<\gamma<1\), and then the critical logarithmic endpoint \(\gamma=1\).

\subsection{Subcritical quotient semigroup Sewing} \label{sec:quotient-semigroup}
We begin by introducing the quotient spaces in which the frozen increments will naturally live.
Let $\tau\in[0,T]$, $\alpha\in\mathbb R$, and $\gamma\in(0,1)$.  Denote by
\[
ZC(\Delta_\tau^2;B_\alpha):=\ker(\delta)\cap C(\Delta^2_\tau;B_\alpha)
\]
the space of ordinary $2$-cocycles on $\Delta_\tau$. For $r,\widetilde r\in C(\Delta^2_\tau;B_\alpha)$, write
\begin{equation}
\label{eq:frozen-equivalence-relation}
r\sim_\tau \widetilde r \quad \Longleftrightarrow \quad r-\widetilde r\in ZC(\Delta^2_\tau;B_\alpha),
\end{equation}
and denote the corresponding equivalence class by $[r]_\tau$. The frozen subcritical quotient space is
\[
\mathfrak C^{\gamma}(\Delta^2_\tau;B_\alpha):=\bigl\{[r]_\tau \mid [r]_\tau\cap C^{\gamma}(\Delta^2_\tau;B_\alpha)\neq\varnothing\bigr\},
\]
equipped with the quotient seminorm
\[
\|[r]_\tau\|_{\mathfrak C_2^{\gamma,\alpha}(\tau)}:=\inf\Bigl\{\| \widetilde r\|_{\gamma,\alpha;\tau} \ \big| \ \widetilde r\in [r]_\tau\cap C^{\gamma}(\Delta^2_\tau;B_\alpha)\Bigr\}.
\]
Quotients are taken modulo all ordinary cocycles, so a semigroup defect naturally defines a class of continuous frozen increments; dyadic normalization yields a 
$\gamma$-H\"older representative, and before defining the semigroup Sewing map we still need restriction and semigroup transport operations on these classes.

\begin{proposition}\label{prop:frozen-quotient-operations}
Let $\gamma\in(0,1)$, $\alpha\in\mathbb R$ and $0\le \tau'\le \tau\le T$.
\begin{enumerate}
\item The restriction map
\begin{equation*}
\operatorname{Res}_{\tau,\tau'}:\mathfrak C^{\gamma}(\Delta^2_\tau;B_\alpha)\to \mathfrak C^{\gamma}(\Delta^2_{\tau'};B_\alpha), \quad [r]_\tau\mapsto\operatorname{Res}_{\tau,\tau'}[r]_\tau:= [r|_{\Delta_{\tau'}^2}]_{\tau'}
\end{equation*}
is well defined and linear. The map satisfies the seminorm bound 
$$\|\operatorname{Res}_{\tau,\tau'}\mathfrak r\|_{\mathfrak C_2^{\gamma,\alpha}(\tau')}\le \|\mathfrak r\|_{\mathfrak C_2^{\gamma,\alpha}(\tau)}.$$
\mlabel{it:froza}

\item The transport map
\begin{equation*}
\mathcal T_{\tau,\tau'}:\mathfrak C^{\gamma}(\Delta^2_{\tau'};B_\alpha)\to \mathfrak C^{\gamma}(\Delta^2_{\tau'};B_\alpha), \quad
[q]_{\tau'}\mapsto \mathcal T_{\tau,\tau'}[q]_{\tau'}:=[S_{\tau-\tau'}q]_{\tau'}
\end{equation*}
is well defined and linear. The map satisfies the seminorm bound 
$$\|\mathcal T_{\tau,\tau'}\mathfrak q\|_{\mathfrak C_2^{\gamma,\alpha}(\tau')}\le M_{\alpha,T}\|\mathfrak q\|_{\mathfrak C_2^{\gamma,\alpha}(\tau')}.$$
\mlabel{it:frozb}
\end{enumerate}
\end{proposition}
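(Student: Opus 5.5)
The plan is to check, for each operation, three things: independence of the representative, preservation of the class $\mathfrak C^{\gamma}$, and the seminorm bound. In both parts linearity is immediate, because restriction and left multiplication by the bounded operator $S_{\tau-\tau'}$ are linear on $C(\Delta^2;B_\alpha)$. The only structural input is that each operation maps ordinary $2$-cocycles to ordinary $2$-cocycles.

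For part (\mref{it:froza}), the first step is to show that restriction preserves cocycles. The coboundary \eqref{eq:delta-def} evaluated at $(t,u,s)\in\Delta^3_{\tau'}\subseteq\Delta^3_\tau$ only uses the values at $(t,s),(t,u),(u,s)$, all of which lie in $\Delta^2_{\tau'}$. Hence $\delta(r|_{\Delta^2_{\tau'}})=(\delta r)|_{\Delta^3_{\tau'}}$. So if $r\sim_\tau\widetilde r$, then $r|_{\Delta^2_{\tau'}}\sim_{\tau'}\widetilde r|_{\Delta^2_{\tau'}}$, and the map is well defined.

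For the bound in part (\mref{it:froza}), take any $\widetilde r\in[r]_\tau\cap C^\gamma(\Delta^2_\tau;B_\alpha)$. Its restriction lies in $[r|_{\Delta^2_{\tau'}}]_{\tau'}$, and since the supremum in \eqref{eq:c2-holder} is now taken over a smaller set, $\|\widetilde r|_{\Delta^2_{\tau'}}\|_{\gamma,\alpha;\tau'}\le\|\widetilde r\|_{\gamma,\alpha;\tau}$. In particular the restricted class meets $C^\gamma$, so it belongs to $\mathfrak C^\gamma(\Delta^2_{\tau'};B_\alpha)$. Taking the infimum over $\widetilde r$ gives the stated bound.

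For part (\mref{it:frozb}), the operator $P:=S_{\tau-\tau'}\in\mathcal L(B_\alpha,B_\alpha)$ is a fixed operator, independent of the time variables. Therefore $\delta(Pq)=P\,\delta q$ pointwise, by linearity of $P$ applied to \eqref{eq:delta-def}. This gives three facts:
\begin{itemize}
\item $P$ maps $ZC(\Delta^2_{\tau'};B_\alpha)$ into itself, so the map is independent of the representative;
\item $Pq$ is continuous, since $P$ is bounded;
\item for $\widetilde q\in[q]_{\tau'}\cap C^\gamma$ we have $\|P\widetilde q_{ts}\|_{B_\alpha}\le M_{\alpha,T}\|\widetilde q_{ts}\|_{B_\alpha}$, using \eqref{eq:semigroup-level-bound}.
\end{itemize}
The last fact gives $\|P\widetilde q\|_{\gamma,\alpha;\tau'}\le M_{\alpha,T}\|\widetilde q\|_{\gamma,\alpha;\tau'}$, with $P\widetilde q\in[Pq]_{\tau'}$. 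Taking the infimum gives both the membership of the image in $\mathfrak C^\gamma$ and the seminorm bound.

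There is no real obstacle here. The only point needing care is the representative-independence in part (\mref{it:frozb}). It relies on $S_{\tau-\tau'}$ being a constant operator, unlike $\Theta_\tau$, whose operator depends on $t_1$. This is exactly what makes it commute with the ordinary $\delta$ rather than intertwining $\hat\delta$ with $\delta$.
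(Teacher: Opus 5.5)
Your proposal is correct and follows the paper's proof. Restriction, and left multiplication by the constant operator $S_{\tau-\tau'}$, map ordinary cocycles to ordinary cocycles, which gives well-definedness; the seminorm bounds follow by pushing a $C^\gamma$ representative through the operation and taking the infimum. You also write out part (b) explicitly, which the paper only declares similar, and your remark that $S_{\tau-\tau'}$ commutes with $\delta$ because it does not depend on the time variables is exactly the point needed there.
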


\begin{proof}
We only prove Item~(\ref{it:froza}), as the proof for Item~(\ref{it:frozb}) is similar. Suppose that $r\sim_\tau \widetilde r$. Then $r-\widetilde r\in ZC(\Delta^2_\tau;B_\alpha)$, and therefore
$\delta\bigl((r-\widetilde r)|_{\Delta_{\tau'}^2}\bigr)=0$. Hence $r|_{\Delta_{\tau'}^2}\sim_{\tau'} \widetilde r|_{\Delta_{\tau'}^2}$, which proves that the restriction map  is well defined. Linearity is immediate.
Let $\mathfrak r=[r]_\tau\in \mathfrak C^{\gamma}(\Delta^2_\tau;B_\alpha)$ and fix $\varepsilon>0$.
Choose $\widetilde r\in [r]_\tau\cap C^{\gamma}(\Delta^2_\tau;B_\alpha)$ such that
\[
\|\widetilde r\|_{\gamma,\alpha;\tau}\le \|\mathfrak r\|_{\mathfrak C_2^{\gamma,\alpha}(\tau)}+\varepsilon.
\]
Then $\|\widetilde r|_{\Delta_{\tau'}^2}\|_{\gamma,\alpha;\tau'}\le \|\mathfrak r\|_{\mathfrak C_2^{\gamma,\alpha}(\tau)}+\varepsilon$. By taking the infimum over all such $\widetilde r$ gives
\[
\|\operatorname{Res}_{\tau,\tau'}\mathfrak r\|_{\mathfrak C_2^{\gamma,\alpha}(\tau')}\le \|\mathfrak r\|_{\mathfrak C_2^{\gamma,\alpha}(\tau)}.\qedhere
\]
\end{proof}

The quotient classes are still attached to one fixed terminal time. The semigroup Sewing problem, however, is genuinely global in time. The next definition singles out the exact compatibility relation which replaces the transport identity of Definition~\ref{def:compatible-frozen-family}.

\begin{definition}\label{def:compatible-frozen-quotient-families}
Let $\gamma\in(0,1)$ and $\alpha\in\mathbb R$. A {\bf compatible frozen quotient family} is a family $\mathfrak r=(\mathfrak r^\tau)_{\tau\in[0,T]}$ such that
\begin{enumerate}
\item for every $\tau\in[0,T]$, $\mathfrak r^\tau\in \mathfrak C^{\gamma}(\Delta^2_\tau;B_\alpha)$;

\item for every $0\le \tau'\le \tau\le T$, $\operatorname{Res}_{\tau,\tau'}\mathfrak r^\tau=\mathcal T_{\tau,\tau'}\mathfrak r^{\tau'}$;

\item the family is uniformly bounded in quotient seminorm $\sup_{\tau\in[0,T]}\|\mathfrak r^\tau\|_{\mathfrak C_2^{\gamma,\alpha}(\tau)}<\infty$.
\end{enumerate}
We denote by $\mathfrak Q_{\mathrm{fr}}^{\gamma,\alpha}$ the vector space of all compatible frozen quotient families, endowed with the seminorm $$\|\mathfrak r\|_{\mathfrak Q_{\mathrm{fr}}^{\gamma,\alpha}}:=\sup_{\tau\in[0,T]}\|\mathfrak r^\tau\|_{\mathfrak C_2^{\gamma,\alpha}(\tau)}.$$
\end{definition}

We now return to the dyadic representatives constructed in Section~\ref{sec:dyadic-frozen}.  At the level of actual representatives they were not transport-compatible, but the next statement shows that the obstruction disappears after quotienting.

\begin{proposition}\label{prop:dyadic-family-quotient-compatible}
Let $\gamma\in(0,1)$, $\alpha\in\mathbb R$ and $h\in Z\hat C^{\gamma}(\Delta^3_T;B_\alpha)$. Define, for each $\tau\in[0,T]$, $\mathfrak r_{\mathrm d}^\tau(h):=[r_{\mathrm d}^\tau(h)]_\tau$, where $r_{\mathrm d}^\tau(h)$ is the dyadic frozen increment from Definition~\ref{def:dyadic-frozen-defect-family}. Then
\[
\mathfrak r_{\mathrm d}(h):=\bigl(\mathfrak r_{\mathrm d}^\tau(h)\bigr)_{\tau\in[0,T]}\in \mathfrak Q_{\mathrm{fr}}^{\gamma,\alpha}.
\]
Moreover, for every $\tau\in[0,T]$ and every representative $r^\tau\in \mathfrak r_{\mathrm d}^\tau(h)$, one has $\delta r^\tau = h^\tau$. Finally,
\begin{equation}
\label{eq:dyadic-quotient-estimate}
\|\mathfrak r_{\mathrm d}(h)\|_{\mathfrak Q_{\mathrm{fr}}^{\gamma,\alpha}}\le C_\gamma M_{\alpha,T}\|h\|_{\gamma,\alpha}.
\end{equation}
\end{proposition}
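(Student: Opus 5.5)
The plan is to verify the three conditions of Definition~\ref{def:compatible-frozen-quotient-families} for $\mathfrak r_{\mathrm d}(h)$, then the two remaining claims, using Theorem~\ref{prop:dyadic-frozen-defects} as the basic input.

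\textbf{Membership and uniform bound.} By Theorem~\ref{prop:dyadic-frozen-defects}, for each $\tau$ the dyadic increment $r_{\mathrm d}^\tau(h)$ lies in $C^{\gamma}(\Delta^2_\tau;B_\alpha)$. Hence its class $[r_{\mathrm d}^\tau(h)]_\tau$ meets $C^\gamma$, which gives condition (a). Moreover,
\[
\|\mathfrak r_{\mathrm d}^\tau(h)\|_{\mathfrak C_2^{\gamma,\alpha}(\tau)}\le \|r_{\mathrm d}^\tau(h)\|_{\gamma,\alpha;\tau}\le C_\gamma M_{\alpha,T}\|h\|_{\gamma,\alpha}
\]
uniformly in $\tau$. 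This is condition (c), and taking the supremum gives \eqref{eq:dyadic-quotient-estimate}.

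\textbf{Representatives.} If $r^\tau\in\mathfrak r_{\mathrm d}^\tau(h)$, then $r^\tau-r_{\mathrm d}^\tau(h)\in ZC(\Delta^2_\tau;B_\alpha)$. Therefore
\[
\delta r^\tau=\delta r_{\mathrm d}^\tau(h)=h^\tau .
\]

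\textbf{Compatibility, condition (b).} Fix $0\le\tau'\le\tau\le T$. We must show that
\[
D:=r_{\mathrm d}^\tau(h)|_{\Delta^2_{\tau'}}-S_{\tau-\tau'}r_{\mathrm d}^{\tau'}(h)
\]
is an ordinary $2$-cocycle on $\Delta_{\tau'}$. By linearity of $\delta$ and of $S_{\tau-\tau'}$,
\[
\delta D=h^\tau|_{\Delta^3_{\tau'}}-S_{\tau-\tau'}h^{\tau'}.
\]
For $(t,u,s)\in\Delta^3_{\tau'}$ we have
\[
h^\tau_{tus}=S_{\tau-t}h_{tus}=S_{\tau-\tau'}S_{\tau'-t}h_{tus}=S_{\tau-\tau'}h^{\tau'}_{tus},
\]
so $\delta D=0$. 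This gives $\operatorname{Res}_{\tau,\tau'}\mathfrak r_{\mathrm d}^\tau=\mathcal T_{\tau,\tau'}\mathfrak r_{\mathrm d}^{\tau'}$, with both maps well defined by Proposition~\ref{prop:frozen-quotient-operations}.

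\textbf{Expected obstacle.} There is no real analytic obstacle. The mismatch of dyadic grids between $\tau$ and $\tau'$ is invisible because the quotient is taken modulo \emph{all} continuous ordinary cocycles, not only H\"older ones. The only point needing care is that $D$ is continuous on $\Delta^2_{\tau'}$, so that it belongs to $ZC(\Delta^2_{\tau'};B_\alpha)$. This follows since $r_{\mathrm d}^\tau(h)$ and $r_{\mathrm d}^{\tau'}(h)$ are continuous by Lemma~\ref{prop:localized-ordinary-dyadic} and $S_{\tau-\tau'}$ is bounded on $B_\alpha$.
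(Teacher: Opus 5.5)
Your proof is correct and follows essentially the same route as the paper: membership and the uniform bound come from Theorem~\ref{prop:dyadic-frozen-defects}, the defect identity for representatives comes from the cocycle relation, and compatibility follows because $r_{\mathrm d}^\tau(h)|_{\Delta^2_{\tau'}}$ and $S_{\tau-\tau'}r_{\mathrm d}^{\tau'}(h)$ have the same ordinary defect, via $h^\tau_{tus}=S_{\tau-\tau'}h^{\tau'}_{tus}$. One small correction to your closing remark: $D$ is itself $\gamma$-H\"older, being a difference of two $\gamma$-H\"older increments, so the grid mismatch would also disappear modulo H\"older cocycles; what makes it invisible is simply that the two increments share the same ordinary defect, not that the quotient is taken modulo all continuous cocycles.
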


\begin{proof}
For each fixed $\tau$, Theorem~\ref{prop:dyadic-frozen-defects} gives
\[
r_{\mathrm d}^\tau(h)\in C^{\gamma}(\Delta^2_\tau;B_\alpha), \quad \delta r_{\mathrm d}^\tau(h)=h^\tau.
\]
Therefore $[r_{\mathrm d}^\tau(h)]_\tau$ belongs to
$\mathfrak C^{\gamma}(\Delta^2_\tau;B_\alpha)$, proving Definition~\ref{def:compatible-frozen-quotient-families} (a). The defect identity $\delta r^\tau = h^\tau$ is now immediate. Indeed, if $r^\tau\in \mathfrak r_{\mathrm d}^\tau(h)$, then $r^\tau-r_{\mathrm d}^\tau(h)\in ZC(\Delta^2_\tau;B_\alpha)$, so $\delta r^\tau=\delta r_{\mathrm d}^\tau(h)=h^\tau$.

We next prove the compatibility relation Definition~\ref{def:compatible-frozen-quotient-families} (b).  Fix $0\le \tau'\le \tau\le T$. On
$\Delta_{\tau'}^3$,  
\[
\delta\bigl(r_{\mathrm d}^\tau(h)|_{\Delta_{\tau'}^2}\bigr)=h^\tau|_{\Delta_{\tau'}^3}.
\]
On the other hand,
\[
\delta\bigl(S_{\tau-\tau'}r_{\mathrm d}^{\tau'}(h)\bigr)=S_{\tau-\tau'}\delta r_{\mathrm d}^{\tau'}(h)=S_{\tau-\tau'}h^{\tau'}.
\]
Now, for $0\le s\le u\le t\le \tau'$, the definition of $h^\tau$ gives
\[
h^\tau_{tus}=S_{\tau-t}h_{tus}=S_{\tau-\tau'}S_{\tau'-t}h_{tus}=S_{\tau-\tau'}h^{\tau'}_{tus}.
\]
Thus
\[
\delta\bigl(r_{\mathrm d}^\tau(h)|_{\Delta_{\tau'}^2}\bigr)=\delta\bigl(S_{\tau-\tau'}r_{\mathrm d}^{\tau'}(h)\bigr).
\]
Their difference is therefore an ordinary cocycle, which exactly means that
\[
\operatorname{Res}_{\tau,\tau'}[r_{\mathrm d}^\tau(h)]_\tau
=
\mathcal T_{\tau,\tau'}[r_{\mathrm d}^{\tau'}(h)]_{\tau'}.
\]
Hence the family is compatible in the sense of Definition~\ref{def:compatible-frozen-quotient-families}.

Finally, since the class seminorm is bounded by the norm of any representative, we obtain from Theorem~\ref{prop:dyadic-frozen-defects} that
\[
\|\mathfrak r_{\mathrm d}^\tau(h)\|_{\mathfrak C_2^{\gamma,\alpha}(\tau)}\le \|r_{\mathrm d}^\tau(h)\|_{\gamma,\alpha;\tau}\le C_\gamma M_{\alpha,T}\|h\|_{\gamma,\alpha}.
\]
Taking the supremum over $\tau\in[0,T]$ proves \eqref{eq:dyadic-quotient-estimate}.
\end{proof}

The previous proposition is the key quotient step.  The dyadic frozen increments need not be compatible as actual increments, but their quotient classes are transport-compatible.  We can therefore define the subcritical quotient semigroup Sewing map by collecting these compatible frozen quotient classes.

\begin{theorem}\label{thm:quotient-semigroup-sewing-subcritical}
Let $\gamma\in(0,1)$ and $\alpha\in\mathbb R$. The assignment
\begin{equation}
\label{eq:quotient-semigroup-sewing-map}
\hat\Lambda_{S,\gamma,\alpha}^{\mathrm q}:Z\hat C^{\gamma}(\Delta^3_T;B_\alpha)\to \mathfrak Q_{\mathrm{fr}}^{\gamma,\alpha},
\quad h\mapsto \hat\Lambda_{S,\gamma,\alpha}^{\mathrm q}h,\quad 
\bigl(\hat\Lambda_{S,\gamma,\alpha}^{\mathrm q}h\bigr)^\tau:=[r_{\mathrm d}^\tau(h)]_\tau,
\end{equation}
is a well defined continuous linear map such that
\begin{enumerate}
\item for every $\tau\in[0,T]$ and every representative $r^\tau\in \bigl(\hat\Lambda_{S,\gamma,\alpha}^{\mathrm q}h\bigr)^\tau$, $\delta r^\tau = h^\tau$; \label{it:quotienta}

\item in particular, for every $0\le s\le u\le t\le T$ and every representative $r^t\in \bigl(\hat\Lambda_{S,\gamma,\alpha}^{\mathrm q}h\bigr)^t$, $(\delta r^t)_{tus}=h_{tus}$;  \label{it:quotientb}

\item for every $h\in Z\hat C^{\gamma}(\Delta^3_T;B_\alpha)$, the uniform estimate $\|\hat\Lambda_{S,\gamma,\alpha}^{\mathrm q}h\|_{\mathfrak Q_{\mathrm{fr}}^{\gamma,\alpha}}\le C_{\gamma, T} M_{\alpha,T}\|h\|_{\gamma,\alpha}$ holds.  \label{it:quotientc}
\end{enumerate}
\end{theorem}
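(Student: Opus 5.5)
The plan is to read off almost everything from Proposition~\ref{prop:dyadic-family-quotient-compatible}; the only genuinely new point is linearity (and hence continuity) of the assignment $h\mapsto\hat\Lambda_{S,\gamma,\alpha}^{\mathrm q}h$.

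\emph{Well-definedness.} For each $h\in Z\hat C^{\gamma}(\Delta^3_T;B_\alpha)$, Proposition~\ref{prop:dyadic-family-quotient-compatible} shows that the family $\bigl([r_{\mathrm d}^\tau(h)]_\tau\bigr)_{\tau}$ belongs to $\mathfrak Q_{\mathrm{fr}}^{\gamma,\alpha}$. This covers all three conditions of Definition~\ref{def:compatible-frozen-quotient-families}: membership in $\mathfrak C^\gamma$, the compatibility $\operatorname{Res}_{\tau,\tau'}=\mathcal T_{\tau,\tau'}$, and uniform boundedness. Since $r_{\mathrm d}^\tau(h)=\Lambda_\tau^{\mathrm d}(\Theta_\tau h)$ is a fixed, explicitly defined object, the map is well defined. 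No choice enters, because $\mathcal I_\tau^{\mathrm d}$ was fixed once and for all after Lemma~\ref{prop:localized-ordinary-dyadic}.

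\emph{Linearity.} I would argue at the level of representatives:
\begin{itemize}
\item $\Theta_\tau$ is linear.
\item $h\mapsto b^h$ in \eqref{eq:canonical-frozen-primitive} is linear.
\item $\Lambda_\tau^{\mathrm d}$ is linear by Proposition~\ref{thm:frozen-dyadic-sewing}.
\end{itemize}
Hence $r_{\mathrm d}^\tau(\lambda h+\mu g)=\lambda r_{\mathrm d}^\tau(h)+\mu r_{\mathrm d}^\tau(g)$. Passing to classes modulo the linear subspace $ZC(\Delta^2_\tau;B_\alpha)$ preserves this identity. The vector space structure on $\mathfrak Q_{\mathrm{fr}}^{\gamma,\alpha}$ is taken componentwise in $\tau$, so the map is linear.

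\emph{Item (c) and continuity.} The bound in item~(\ref{it:quotientc}) is exactly \eqref{eq:dyadic-quotient-estimate}, with $C_{\gamma,T}:=C_\gamma$. Together with linearity, this gives continuity with respect to the seminorm on $\mathfrak Q_{\mathrm{fr}}^{\gamma,\alpha}$.

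\emph{Items (a) and (b).} Item~(\ref{it:quotienta}) is the representative statement already contained in Proposition~\ref{prop:dyadic-family-quotient-compatible}: any two representatives differ by an ordinary cocycle, so they have the same coboundary $h^\tau$. For item~(\ref{it:quotientb}), specialize to $\tau=t$ and evaluate at $(t,u,s)\in\Delta_t^3$. Then
\[
(\delta r^t)_{tus}=h^t_{tus}=S_{t-t}h_{tus}=h_{tus},
\]
since $S_0=\mathrm{Id}$.

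\emph{Main obstacle.} There is no real analytic obstacle here; the substantive content, quotient compatibility, was proved in Proposition~\ref{prop:dyadic-family-quotient-compatible}. The step to check most carefully is that the constant in the uniform estimate does not depend on $\tau$. This relies on the $\tau$-independence of $C_\gamma$ asserted in Lemma~\ref{prop:localized-ordinary-dyadic}, combined with the uniform bound $M_{\alpha,T}$ from Proposition~\ref{prop:frozen-regularity}.
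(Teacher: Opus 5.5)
Your proposal is correct and follows the paper's proof essentially step for step. Linearity comes from the linearity of $\Theta_\tau$ and $\Lambda_\tau^{\mathrm d}$, taken componentwise in $\tau$ and passed to classes; well-definedness and item~(c), hence continuity, come from Proposition~\ref{prop:dyadic-family-quotient-compatible}; item~(a) holds because any two representatives differ by an ordinary cocycle; and item~(b) follows by setting $\tau=t$ and using $S_0=\mathrm{Id}$.
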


\begin{proof}
For each $\tau$, the map $h\longmapsto h^\tau=\Theta_\tau h$ is linear, and the dyadic frozen Sewing map
\[
\Lambda_\tau^{\mathrm d}:ZC^{\gamma}(\Delta^3_\tau;B_\alpha)\to C^{\gamma}(\Delta^2_\tau;B_\alpha)
\]
from Proposition~\ref{thm:frozen-dyadic-sewing} is also linear. Therefore $h\longmapsto r_{\mathrm d}^\tau(h)=\Lambda_\tau^{\mathrm d}(h^\tau)$ is linear for every $\tau$, and so is the family-valued map
\eqref{eq:quotient-semigroup-sewing-map}. Proposition~\ref{prop:dyadic-family-quotient-compatible} shows that
\[
\bigl([r_{\mathrm d}^\tau(h)]_\tau\bigr)_{\tau\in[0,T]}\in \mathfrak Q_{\mathrm{fr}}^{\gamma,\alpha},
\]
so the map is well defined. The same proposition also gives Item~(\ref{it:quotientc}), hence continuity.

It remains to prove Item~(\ref{it:quotienta}) and Item~(\ref{it:quotientb}). Let $\tau\in[0,T]$ and $r^\tau\in \bigl(\hat\Lambda_{S,\gamma,\alpha}^{\mathrm q}h\bigr)^\tau$. By
definition of the quotient class, $r^\tau-r_{\mathrm d}^\tau(h)$ is an ordinary
cocycle, so
\[
\delta r^\tau=\delta r_{\mathrm d}^\tau(h)=h^\tau.
\]
This proves Item~(\ref{it:quotienta}). Finally, setting $\tau=t$ and using $h^t_{tus}=S_{t-t}h_{tus}=h_{tus}$ yields Item~(\ref{it:quotientb}).
\end{proof}

The preceding theorem constructs the canonical subcritical quotient object.  The next corollary shows that, whenever an actual semigroup increment exists, its frozen quotient classes coincide with the dyadic quotient classes constructed above.

\begin{coro}\label{cor:quotient-semigroup-intrinsic}
Let $\gamma\in(0,1)$, $\alpha\in\mathbb R$ and 
$h\in Z\hat C^{\gamma}(\Delta^3_T;B_\alpha)$.  If
\begin{equation}
\label{eq:arbitrary-semigroup-primitive}
B\in C(\Delta^2_T;B_\alpha),\quad \hat\delta B = h,
\end{equation}
then, for every $\tau\in[0,T]$,
\begin{equation}
\label{eq:quotient-semigroup-intrinsic-formula}
\bigl(\hat\Lambda_{S,\gamma,\alpha}^{\mathrm q}h\bigr)^\tau=[\Theta_\tau B]_\tau .
\end{equation}
In particular,
\begin{equation}
\label{eq:quotient-semigroup-canonical-primitive}
\bigl(\hat\Lambda_{S,\gamma,\alpha}^{\mathrm q}h\bigr)^\tau=[b^\tau(h)]_\tau,\quad b^\tau(h)_{ts}:=-h^\tau_{ts0}.
\end{equation}
Consequently, the quotient class $\bigl(\hat\Lambda_{S,\gamma,\alpha}^{\mathrm q}h\bigr)^\tau$ is independent of both the chosen semigroup increment \(B\) and the dyadic normalization used in Section~\ref{sec:dyadic-frozen}.
\end{coro}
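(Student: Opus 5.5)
The argument rests on one observation: the frozen quotient class modulo ordinary cocycles is determined entirely by the ordinary coboundary. If two increments in $C(\Delta^2_\tau;B_\alpha)$ have the same ordinary coboundary, their difference lies in $ZC(\Delta^2_\tau;B_\alpha)$. By \eqref{eq:frozen-equivalence-relation}, this means they define the same class $[\cdot]_\tau$. So for each fixed $\tau$ it is enough to compare coboundaries.

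\emph{Step 1: the identity \eqref{eq:quotient-semigroup-intrinsic-formula}.} Fix $\tau\in[0,T]$ and let $B$ be as in \eqref{eq:arbitrary-semigroup-primitive}. Since $B$ is continuous, $\Theta_\tau B\in C(\Delta^2_\tau;B_\alpha)$; this uses strong continuity of $S$ together with the uniform bound $M_{\alpha,T}$. Proposition~\ref{prop:frozen-intertwining} then gives $\delta(\Theta_\tau B)=\Theta_\tau(\hat\delta B)=\Theta_\tau h=h^\tau$. On the other hand, Theorem~\ref{prop:dyadic-frozen-defects} gives $\delta r^\tau_{\mathrm d}(h)=h^\tau$. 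Hence $\Theta_\tau B-r^\tau_{\mathrm d}(h)$ is an ordinary cocycle, so $[\Theta_\tau B]_\tau=[r^\tau_{\mathrm d}(h)]_\tau$. By \eqref{eq:quotient-semigroup-sewing-map}, this is $\bigl(\hat\Lambda^{\mathrm q}_{S,\gamma,\alpha}h\bigr)^\tau$.

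\emph{Step 2: the identity \eqref{eq:quotient-semigroup-canonical-primitive}.} Corollary~\ref{cor:frozen-defects} and Lemma~\ref{lem:canonical-frozen-primitive} show that $b^\tau(h)=b^{h^\tau}$ is continuous and satisfies $\delta b^\tau(h)=h^\tau$. The same cocycle argument as in Step~1 then yields $[b^\tau(h)]_\tau=[r^\tau_{\mathrm d}(h)]_\tau$.

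One could also obtain Step~2 from Step~1 by exhibiting a global semigroup increment. By Lemma~\ref{prop:exactness}, the choice $B_{ts}:=h_{ts0}$ satisfies $\hat\delta B=h$ (the sign $(-1)^{n+1}$ with $n=2$ is $-1$, so in fact $B_{ts}=-h_{ts0}$). With this $B$ one computes $\Theta_\tau B=b^\tau(h)$, since $S_{\tau-t}h_{ts0}=h^\tau_{ts0}$. I will mention this route, but the direct cocycle argument is cleaner and avoids the sign bookkeeping.

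\emph{Step 3: independence.} The right-hand side of \eqref{eq:quotient-semigroup-canonical-primitive} involves neither $B$ nor any dyadic choice. Independence from $B$ follows from \eqref{eq:quotient-semigroup-intrinsic-formula} holding for every admissible $B$. Independence from the normalization holds because any other normalized ordinary sewing operator $\Lambda'_\tau$ also satisfies $\delta\Lambda'_\tau h^\tau=h^\tau$, which by the same cocycle argument places $\Lambda'_\tau h^\tau$ in the class $[b^\tau(h)]_\tau$.

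I do not expect a serious obstacle. The one point needing care is that all the increments being compared lie in $C(\Delta^2_\tau;B_\alpha)$, so that the equivalence $\sim_\tau$ applies. Membership in $\mathfrak C^{\gamma}$ is automatic, because the class already contains the Hölder representative $r^\tau_{\mathrm d}(h)$.
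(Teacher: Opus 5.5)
Your proposal is correct and matches the paper's proof. Both rest on the observation that any two increments with ordinary coboundary $h^\tau$ differ by an element of $ZC(\Delta^2_\tau;B_\alpha)$, applied to $\Theta_\tau B$ (via Proposition~\ref{prop:frozen-intertwining}) and $r^\tau_{\mathrm d}(h)$ (via Theorem~\ref{prop:dyadic-frozen-defects}). The one minor difference is in Step~2: the paper takes the route you give as an aside, specializing Step~1 to $B^h_{ts}=-h_{ts0}$ from Lemma~\ref{prop:exactness}, while your direct use of Lemma~\ref{lem:canonical-frozen-primitive} on $h^\tau$ is equally valid and avoids the global exactness lemma.
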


\begin{proof}
Fix $\tau\in[0,T]$. By Proposition~\ref{prop:frozen-intertwining} and \eqref{eq:arbitrary-semigroup-primitive},
\[
\delta(\Theta_\tau B)=\Theta_\tau(\hat\delta B)=\Theta_\tau h=h^\tau.
\]
On the other hand, Theorem~\ref{prop:dyadic-frozen-defects} gives $\delta r_{\mathrm d}^\tau(h)=h^\tau$. Hence $\delta\bigl(\Theta_\tau B-r_{\mathrm d}^\tau(h)\bigr)=0$, which means exactly that
\[
[\Theta_\tau B]_\tau=[r_{\mathrm d}^\tau(h)]_\tau.
\]
By definition of $\hat\Lambda_{S,\gamma,\alpha}^{\mathrm q}$, this proves
\eqref{eq:quotient-semigroup-intrinsic-formula}.
Let \(B^h_{ts}:=-h_{ts0}\).  By Lemma~\ref{prop:exactness}, \(\hat\delta B^h=h\), and \(\Theta_\tau B^h=b^\tau(h)\). Taking $B=B^h$ yields \eqref{eq:quotient-semigroup-canonical-primitive}. The independence of the chosen semigroup increment follows by applying the same argument to two semigroup increments with the same \(\hat\delta\)-defect.  The independence of the dyadic normalization follows because any other dyadic representative with
ordinary defect \(h^\tau\) differs from \(r_{\mathrm d}^\tau(h)\) by an ordinary cocycle.
\end{proof}

\subsection{Critical logarithmic quotient semigroup Sewing} \label{sec:critical-semigroup}
At the critical endpoint \(\gamma=1\), the algebraic quotient construction is unchanged, but the analytic estimate changes because ordinary frozen Sewing only gives logarithmic endpoint control.  We therefore treat this case separately, using logarithmic quotient spaces in place of the subcritical Hölder spaces.

We keep the cocycle equivalence relation \eqref{eq:frozen-equivalence-relation}. The only new point is that the regular representatives are now measured in the critical logarithmic class. Let $\tau\in[0,T]$ and $\alpha\in\mathbb R$. We define
\[
\mathfrak C_{\log}^{1}(\Delta^2_\tau;B_\alpha):=\bigl\{[r]_\tau \mid [r]_\tau\cap C_{\log}^{1}(\Delta^2_\tau;B_\alpha)\neq\varnothing\bigr\}.
\]
For $\mathfrak r=[r]_\tau\in \mathfrak C_{\log}^{1}(\Delta^2_\tau;B_\alpha)$, we define the quotient logarithmic seminorm by
\[
\|\mathfrak r\|_{\mathfrak C_{2,\log}^{1,\alpha}(\tau)}:=\inf\Bigl\{\|\widetilde r\|_{1,\log;\tau,\alpha} \ \big|\  \widetilde r\in [r]_\tau\cap C_{\log}^{1}(\Delta^2_\tau;B_\alpha)
\Bigr\}.
\]

The equivalence relation is the same as in the subcritical case; only the analytic representative space is replaced by the logarithmic one.  Restriction and semigroup transport therefore descend to the critical quotient in the same way.

\begin{proposition}\label{prop:critical-frozen-quotient-operations}
Let $\alpha\in\mathbb R$ and $0\le \tau'\le \tau\le T$.
\begin{enumerate}
\item The restriction map
\begin{equation*}
\operatorname{Res}_{\tau,\tau'}:\mathfrak C^{1}_{\mathrm{log}}(\Delta^2_\tau;B_\alpha)\to \mathfrak C^{1}_{\mathrm{log}}(\Delta^2_{\tau'};B_\alpha), \quad [r]_\tau\mapsto\operatorname{Res}_{\tau,\tau'}[r]_\tau:= [r|_{\Delta_{\tau'}^2}]_{\tau'}
\end{equation*}
is well defined and linear. The map satisfies the seminorm bound 
$$\|\operatorname{Res}_{\tau,\tau'}\mathfrak r\|_{\mathfrak C_{2, \mathrm{log}}^{1,\alpha}(\tau')}\le \|\mathfrak r\|_{\mathfrak C_{2, \mathrm{log}}^{1,\alpha}(\tau)}.$$

\item The transport map
\begin{equation*}
\mathcal T_{\tau,\tau'}:\mathfrak C^{1}_{\mathrm{log}}(\Delta^2_{\tau'};B_\alpha)\to \mathfrak C^{1}_{\mathrm{log}}(\Delta^2_{\tau'};B_\alpha), \quad
[q]_{\tau'}\mapsto \mathcal T_{\tau,\tau'}[q]_{\tau'}:=[S_{\tau-\tau'}q]_{\tau'}
\end{equation*}
is well defined and linear. The map satisfies the seminorm bound 
$$\|\mathcal T_{\tau,\tau'}\mathfrak q\|_{\mathfrak C_{2, \mathrm{log}}^{1,\alpha}(\tau')}\le M_{\alpha,T}\|\mathfrak q\|_{\mathfrak C_{2, \mathrm{log}}^{1,\alpha}(\tau')}.$$
\end{enumerate}
\end{proposition}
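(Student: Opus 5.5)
The plan is to mirror the proof of Proposition~\ref{prop:frozen-quotient-operations}. The equivalence relation \eqref{eq:frozen-equivalence-relation} is unchanged, so only the analytic bounds need checking with the logarithmic seminorm $\|\cdot\|_{1,\log;\tau,\alpha}$ in place of the Hölder one. In each item I will argue in three steps: well-definedness on classes, membership of the image in $\mathfrak C^1_{\log}$, and the seminorm bound via an $\varepsilon$-optimal representative.

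For the restriction map, take $r\sim_\tau\widetilde r$. Then $\delta(r-\widetilde r)=0$ on $\Delta^3_\tau$. Since $\Delta^3_{\tau'}\subset\Delta^3_\tau$ and $\delta$ acts pointwise on the simplex, the restriction of $r-\widetilde r$ to $\Delta^2_{\tau'}$ is still an ordinary cocycle. Hence $[r|_{\Delta^2_{\tau'}}]_{\tau'}$ depends only on $[r]_\tau$, and linearity is immediate. Given $\mathfrak r$ and $\eta>0$, I choose $\widetilde r\in\mathfrak r\cap C^1_{\log}(\Delta^2_\tau;B_\alpha)$ with $\|\widetilde r\|_{1,\log;\tau,\alpha}\le\|\mathfrak r\|+\eta$. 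The supremum defining the logarithmic seminorm on $\Delta^2_{\tau'}$ runs over a subset of the pairs $s<t$ used on $\Delta^2_\tau$. Therefore $\widetilde r|_{\Delta^2_{\tau'}}$ is a $C^1_{\log}$ representative of the restricted class with no larger seminorm. This shows the image lies in $\mathfrak C^1_{\log}(\Delta^2_{\tau'};B_\alpha)$ and gives the bound after letting $\eta\to0$.

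For the transport map, well-definedness follows because $S_{\tau-\tau'}$ is a bounded linear operator independent of the time variables. Concretely, $\delta(S_{\tau-\tau'}q)=S_{\tau-\tau'}\delta q$, so cocycles are mapped to cocycles. Continuity of $S_{\tau-\tau'}q$ on $\Delta^2_{\tau'}$ is also preserved. For the bound, take an $\eta$-optimal representative $\widetilde q$ and use \eqref{eq:semigroup-level-bound}:
\[
\|S_{\tau-\tau'}\widetilde q_{ts}\|_{B_\alpha}\le M_{\alpha,T}\|\widetilde q_{ts}\|_{B_\alpha}.
\]
Dividing by $|t-s|(1+|\log|t-s||)$ gives $\|S_{\tau-\tau'}\widetilde q\|_{1,\log;\tau',\alpha}\le M_{\alpha,T}(\|\mathfrak q\|+\eta)$. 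Letting $\eta\to 0$ yields the stated bound.

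No step is a real obstacle. The only point requiring care is that the logarithmic weight $|t-s|(1+|\log|t-s||)$ depends only on the pair $(t,s)$ and not on the terminal time. Consequently, restricting the domain and applying a uniformly bounded operator both preserve the seminorm exactly as in the Hölder case, and the constants stay uniform in $\tau,\tau'\in[0,T]$.
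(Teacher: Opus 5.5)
Your proposal is correct and follows essentially the same route as the paper. The paper's proof reduces to the argument of Proposition~\ref{prop:frozen-quotient-operations}: ordinary cocycles are preserved by restriction and by the time-independent bounded operator $S_{\tau-\tau'}$, and the logarithmic quotient bounds follow from an almost-optimal representative together with $\|S_{\tau-\tau'}\|_{\mathcal L(B_\alpha,B_\alpha)}\le M_{\alpha,T}$. You simply make explicit the details the paper leaves implicit, notably that the weight $|t-s|(1+|\log|t-s||)$ does not depend on the terminal time.
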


\begin{proof}
This is the same argument as in Proposition~\ref{prop:frozen-quotient-operations}. Restriction and semigroup transport preserve ordinary cocycles, so both maps are
well defined on quotient classes.  The logarithmic quotient seminorm estimates follow from the corresponding representative estimates and the semigroup bound.
\end{proof}

The previous proposition shows that endpoint quotient classes behave well under restriction and transport. We therefore package them into transport-compatible families, as in the subcritical section.

\begin{definition}\label{def:compatible-critical-quotient-families}
Let  $\alpha\in\mathbb R$. A compatible frozen quotient family is a family $\mathfrak r=(\mathfrak r^\tau)_{\tau\in[0,T]}$ such that
\begin{enumerate}
\item for every $\tau\in[0,T]$, $\mathfrak r^\tau\in \mathfrak C^{1}_{\mathrm{log}}(\Delta^2_\tau;B_\alpha)$;

\item for every $0\le \tau'\le \tau\le T$, $\operatorname{Res}_{\tau,\tau'}\mathfrak r^\tau=\mathcal T_{\tau,\tau'}\mathfrak r^{\tau'}$;

\item the family is uniformly bounded in quotient seminorm $\sup_{\tau\in[0,T]}\|\mathfrak r^\tau\|_{\mathfrak C_{2, \mathrm{log}}^{1,\alpha}(\tau)}<\infty$.
\end{enumerate}
Denote by $\mathfrak Q_{\mathrm{fr}, \mathrm{log}}^{1,\alpha}$ the vector space of all compatible frozen quotient families, endowed with the seminorm $$\|\mathfrak r\|_{\mathfrak Q_{\mathrm{fr}, \mathrm{log}}^{1,\alpha}}:=\sup_{\tau\in[0,T]}\|\mathfrak r^\tau\|_{\mathfrak C_{2, \mathrm{log}}^{1,\alpha}(\tau)}.$$
\end{definition}

We now apply the critical part of Proposition~\ref{thm:frozen-dyadic-sewing} to the frozen defects \(h^\tau=\Theta_\tau h\).  The resulting representatives need not be transport-compatible as actual increments, but their quotient classes are compatible.

\begin{proposition}\label{prop:critical-dyadic-family-quotient-compatible}
Let $\alpha\in\mathbb R$ and $h\in Z\hat C^{1}(\Delta^3_T;B_\alpha)$. For each $\tau\in[0,T]$, define $\mathfrak r_{\mathrm d, \mathrm {log}}^\tau(h):=[r_{\mathrm d}^\tau(h)]_\tau$, where $r_{\mathrm d}^\tau(h)$ is the dyadic frozen increment from Definition~\ref{def:dyadic-frozen-defect-family}. Then
\[
\mathfrak r_{\mathrm d, \mathrm {log}}(h):=\bigl(\mathfrak r_{\mathrm d, \mathrm {log}}^\tau(h)\bigr)_{\tau\in[0,T]}\in \mathfrak Q_{\mathrm{fr}, \mathrm {log}}^{1,\alpha}.
\]
Moreover, for every $\tau\in[0,T]$ and every representative $r^\tau\in \mathfrak r_{\mathrm d, \mathrm {log}}^\tau(h)$, one has $\delta r^\tau = h^\tau$. Finally,
\begin{equation*}
\|\mathfrak r_{\mathrm d, \mathrm {log}}(h)\|_{\mathfrak Q_{\mathrm{fr}, \mathrm {log}}^{1,\alpha}}\le C_T M_{\alpha,T}\|h\|_{1,\alpha}.
\end{equation*}
\end{proposition}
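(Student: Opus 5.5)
The plan is to run the proof of Proposition~\ref{prop:dyadic-family-quotient-compatible} again, changing only the analytic input, which now comes from the critical part of Theorem~\ref{prop:dyadic-frozen-defects}.

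\emph{Step 1: membership and uniform bound.} Fix $\tau\in[0,T]$. By Proposition~\ref{prop:frozen-regularity} with $\gamma=1$, we have $h^\tau=\Theta_\tau h\in ZC^{1}(\Delta^3_\tau;B_\alpha)$ and $\|h^\tau\|_{1,\alpha;\tau}\le M_{\alpha,T}\|h\|_{1,\alpha}$. The critical case of Theorem~\ref{prop:dyadic-frozen-defects} then gives $r_{\mathrm d}^\tau(h)\in C^1_{\log}(\Delta^2_\tau;B_\alpha)$ and $\delta r_{\mathrm d}^\tau(h)=h^\tau$, with the bound $\|r_{\mathrm d}^\tau(h)\|_{1,\log;\tau,\alpha}\le C_{1,T}M_{\alpha,T}\|h\|_{1,\alpha}$. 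Since this constant is uniform in $\tau$ by Lemma~\ref{prop:localized-ordinary-dyadic}, condition (a) of Definition~\ref{def:compatible-critical-quotient-families} holds. The quotient seminorm is an infimum over representatives, so it is bounded by the norm of $r_{\mathrm d}^\tau(h)$. Taking the supremum over $\tau$ gives condition (c) together with the stated estimate, with $C_T:=C_{1,T}$.

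\emph{Step 2: defect of any representative.} Let $r^\tau\in[r_{\mathrm d}^\tau(h)]_\tau$. Then $r^\tau-r_{\mathrm d}^\tau(h)\in ZC(\Delta^2_\tau;B_\alpha)$, and therefore $\delta r^\tau=\delta r_{\mathrm d}^\tau(h)=h^\tau$.

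\emph{Step 3: compatibility, condition (b).} Fix $0\le\tau'\le\tau\le T$. On $\Delta^3_{\tau'}$ the restriction satisfies $\delta(r_{\mathrm d}^\tau(h)|_{\Delta^2_{\tau'}})=h^\tau|_{\Delta^3_{\tau'}}$. Since $S_{\tau-\tau'}$ is a bounded linear operator independent of the time variables, it commutes with $\delta$, so $\delta(S_{\tau-\tau'}r_{\mathrm d}^{\tau'}(h))=S_{\tau-\tau'}h^{\tau'}$. By the semigroup property, $h^\tau_{tus}=S_{\tau-t}h_{tus}=S_{\tau-\tau'}S_{\tau'-t}h_{tus}=S_{\tau-\tau'}h^{\tau'}_{tus}$ for $t\le\tau'$. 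Hence the difference of the two increments is an ordinary cocycle on $\Delta^2_{\tau'}$, which gives $\operatorname{Res}_{\tau,\tau'}\mathfrak r^\tau_{\mathrm d,\log}(h)=\mathcal T_{\tau,\tau'}\mathfrak r^{\tau'}_{\mathrm d,\log}(h)$. Both sides lie in $\mathfrak C^1_{\log}(\Delta^2_{\tau'};B_\alpha)$ by Proposition~\ref{prop:critical-frozen-quotient-operations}.

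\emph{Main obstacle.} The algebra is purely formal and the same as in the subcritical case. The only substantive point is that the logarithmic constant in Lemma~\ref{prop:localized-ordinary-dyadic} does not depend on $\tau\in[0,T]$; this is what makes the supremum in (c) finite. I would rely on the uniformity already asserted in that lemma. If a check is needed, the point to verify is that $1+|\log|t-s||$ is evaluated on $[0,\tau]\subset[0,T]$, so that rescaling the dyadic grid $\mathcal D^\tau$ loses at most a factor depending on $T$.
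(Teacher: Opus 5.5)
Your proposal is correct and follows the paper's proof essentially step for step. Membership and the uniform bound come from the critical case of Theorem~\ref{prop:dyadic-frozen-defects}, and the defect identity holds because classes are taken modulo ordinary cocycles. Quotient compatibility follows from the freezing identity $h^\tau|_{\Delta^3_{\tau'}}=S_{\tau-\tau'}h^{\tau'}$, which forces $r_{\mathrm d}^\tau(h)|_{\Delta^2_{\tau'}}$ and $S_{\tau-\tau'}r_{\mathrm d}^{\tau'}(h)$ to have the same ordinary defect.

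Your remark that the only analytic input is the $\tau$-uniformity of the logarithmic constant is accurate. That uniformity is asserted in Lemma~\ref{prop:localized-ordinary-dyadic}, and it costs at most a $T$-dependent factor because the dyadic scales on $[0,\tau]$ are $\tau 2^{-n}$. Your remark makes explicit a point the paper takes for granted.
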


\begin{proof}
The proof is the same as that of Proposition~\ref{prop:dyadic-family-quotient-compatible}, with the critical logarithmic quotient norm in place of the subcritical Hölder quotient norm.  The endpoint estimate from Section~\ref{sec:dyadic-frozen} gives the stated membership and bound, while the identity $\delta r_{\mathrm d}^\tau(h)=h^\tau$ gives the defect statement.
For \(\tau'\le\tau\), the identity
\[
h^\tau|_{\Delta_{\tau'}^3}=S_{\tau-\tau'}h^{\tau'}
\]
follows directly from the definition of freezing.  Hence $r_{\mathrm d}^\tau(h)|_{\Delta_{\tau'}^2} $ and $S_{\tau-\tau'}r_{\mathrm d}^{\tau'}(h)$ have the same ordinary defect, and therefore define the same class in $\mathfrak C^{1}_{\mathrm{log}}(\Delta^2_{\tau'};B_\alpha)$.  This is exactly the quotient compatibility.
\end{proof}

The logarithmic endpoint class also controls every subcritical exponent below $1$, a fact that will be useful when combined with Banach-scale smoothing.

\begin{coro}\label{cor:critical-dyadic-family-subcritical-traces}
Let $\alpha\in\mathbb R$, $h\in Z\hat C^{1}(\Delta^3_T;B_\alpha)$ and fix $\varepsilon\in(0,1)$. For each $\tau\in[0,T]$, define $\mathfrak r_{\mathrm d,\varepsilon}^\tau(h):=[r_{\mathrm d}^\tau(h)]_\tau$. Then
\[
\mathfrak r_{\mathrm d,\varepsilon}(h):=\bigl(\mathfrak r_{\mathrm d,\varepsilon}^\tau(h)\bigr)_{\tau\in[0,T]}\in\mathfrak Q_{\mathrm{fr}}^{1-\varepsilon,\alpha}, \quad \|\mathfrak r_{\mathrm d,\varepsilon}(h)\|_{\mathfrak Q_{\mathrm{fr}}^{1-\varepsilon,\alpha}}\le C_{\varepsilon,T}M_{\alpha,T}\|h\|_{1,\alpha}.
\]
\end{coro}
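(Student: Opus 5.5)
The plan is to repeat the argument of Proposition~\ref{prop:dyadic-family-quotient-compatible}, with exponent $\gamma$ replaced by $1-\varepsilon$, feeding in the auxiliary subcritical estimate from the critical case of Theorem~\ref{prop:dyadic-frozen-defects}. Nothing new is needed at the algebraic level. The equivalence relation $\sim_\tau$ in \eqref{eq:frozen-equivalence-relation} is modulo all ordinary cocycles, independently of the exponent. Hence the classes $[r_{\mathrm d}^\tau(h)]_\tau$ are the same sets as in Proposition~\ref{prop:critical-dyadic-family-quotient-compatible}; only the analytic membership and the seminorm change.

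\emph{Step 1: membership.} Fix $\tau$. By the critical part of Theorem~\ref{prop:dyadic-frozen-defects},
\[
r_{\mathrm d}^\tau(h)\in C^{1-\varepsilon}(\Delta_\tau^2;B_\alpha)
\quad\text{with}\quad
\|r_{\mathrm d}^\tau(h)\|_{1-\varepsilon,\alpha;\tau}\le C_{\varepsilon,T}M_{\alpha,T}\|h\|_{1,\alpha}.
\]
So the class $[r_{\mathrm d}^\tau(h)]_\tau$ meets $C^{1-\varepsilon}$ and lies in $\mathfrak C^{1-\varepsilon}(\Delta_\tau^2;B_\alpha)$. Since $1-\varepsilon\in(0,1)$, the spaces, seminorms and restriction/transport operations of Proposition~\ref{prop:frozen-quotient-operations} and Definition~\ref{def:compatible-frozen-quotient-families} apply with $\gamma=1-\varepsilon$.

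\emph{Step 2: compatibility.} For $\tau'\le\tau$, the freezing identity
\[
h^\tau|_{\Delta^3_{\tau'}}=S_{\tau-\tau'}h^{\tau'}
\]
holds by the semigroup property. Combining it with $\delta r_{\mathrm d}^\tau(h)=h^\tau$ and the commutation $\delta(S_{\tau-\tau'}q)=S_{\tau-\tau'}\delta q$, we see that $r_{\mathrm d}^\tau(h)|_{\Delta^2_{\tau'}}$ and $S_{\tau-\tau'}r_{\mathrm d}^{\tau'}(h)$ have the same ordinary defect. Their difference is therefore an ordinary cocycle, which gives
\[
\operatorname{Res}_{\tau,\tau'}\mathfrak r^\tau_{\mathrm d,\varepsilon}(h)=\mathcal T_{\tau,\tau'}\mathfrak r^{\tau'}_{\mathrm d,\varepsilon}(h).
\]
This holds in $\mathfrak C^{1-\varepsilon}$ because the underlying sets coincide.

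\emph{Step 3: the bound.} The quotient seminorm is dominated by the norm of any representative. Taking the representative $r_{\mathrm d}^\tau(h)$ and the supremum over $\tau$ in Step 1 gives the stated estimate, together with the uniform boundedness condition (c).

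The only point needing care is Step 1's use of the auxiliary bound. It relies on the constant $C_{\varepsilon,T}$ in Lemma~\ref{prop:localized-ordinary-dyadic} being uniform in $\tau\in[0,T]$, which that lemma asserts. It also relies on the estimate being stated at the same regularity index $\alpha$, with no shift to $\alpha+\varepsilon$; the exponent is simply lowered from the logarithmic endpoint. If one wanted a self-contained justification, the bound
\[
|t-s|(1+|\log|t-s||)\le C_{\varepsilon,T}|t-s|^{1-\varepsilon}
\qquad\text{on }[0,T]
\]
converts the logarithmic estimate directly, since $r^{\varepsilon}|\log r|$ is bounded on $(0,T]$.
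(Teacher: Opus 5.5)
Your proposal is correct and follows essentially the same argument as the paper. Compatibility comes from the equivalence relation being modulo ordinary cocycles, exactly as in Proposition~\ref{prop:critical-dyadic-family-quotient-compatible}. For membership and the bound, you cite the already-stated critical $C^{1-\varepsilon}$ estimate directly, whereas the paper invokes the logarithmic endpoint estimate together with the embedding $C^{1}_{\log}\hookrightarrow C^{1-\varepsilon}$. You also note that embedding as your self-contained justification, so the two routes coincide.
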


\begin{proof}
By the endpoint logarithmic estimate and the elementary embedding
\[
C_{\log}^{1}(\Delta^2_{\tau};B_\alpha)\hookrightarrow C^{1-\varepsilon}(\Delta^2_{\tau};B_\alpha),
\]
we have
\[
[r_{\mathrm d}^\tau(h)]_\tau\in \mathfrak C^{1-\varepsilon}(\Delta^2_{\tau};B_\alpha), \quad \|[r_{\mathrm d}^\tau(h)]_\tau\|_{\mathfrak C_2^{1-\varepsilon,\alpha}(\tau)}\le C_{\varepsilon,T}M_{\alpha,T}\|h\|_{1,\alpha}.
\]
The quotient compatibility is the same as in Proposition~\ref{prop:critical-dyadic-family-quotient-compatible}, since the equivalence relation is still modulo ordinary cocycles. Taking the supremum over $\tau\in[0,T]$ gives the result.
\end{proof}

We now package the endpoint construction as a semigroup Sewing map with values in the frozen logarithmic quotient families of Definition~\ref{def:compatible-critical-quotient-families}.

\begin{theorem}\label{thm:critical-quotient-semigroup-sewing}
Let $\alpha\in\mathbb R$. The assignment
\begin{equation}
\label{eq:critical-quotient-semigroup-sewing-map}
\hat\Lambda_{S,1,\alpha}^{\mathrm q}:Z\hat C^{1}(\Delta^3_T;B_\alpha)\to \mathfrak Q_{\mathrm{fr},\mathrm{log}}^{1,\alpha},
\quad h\mapsto \hat\Lambda_{S,1,\alpha}^{\mathrm q}h,\quad 
\bigl(\hat\Lambda_{S,1,\alpha}^{\mathrm q}h\bigr)^\tau:=[r_{\mathrm d}^\tau(h)]_\tau,
\end{equation}
is a well-defined continuous linear map. It satisfies the following properties
\begin{enumerate}
\item for every $\tau\in[0,T]$ and every representative $r^\tau\in \bigl(\hat\Lambda_{S,1,\alpha}^{\mathrm q}h\bigr)^\tau$, $\delta r^\tau = h^\tau$;

\item in particular, for every $0\le s\le u\le t\le T$ and every representative $r^t\in \bigl(\hat\Lambda_{S,1,\alpha}^{\mathrm q}h\bigr)^t$, $(\delta r^t)_{tus}=h_{tus}$;

\item for every $h\in Z\hat C^{1}(\Delta^3_T;B_\alpha)$, the uniform estimate $\|\hat\Lambda_{S,1,\alpha}^{\mathrm q}h\|_{\mathfrak Q_{\mathrm{fr}, \mathrm{log}}^{1,\alpha}}\le C_{T} M_{\alpha,T}\|h\|_{1,\alpha}$ holds.
\end{enumerate}
\end{theorem}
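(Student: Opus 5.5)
The proof follows the subcritical Theorem~\ref{thm:quotient-semigroup-sewing-subcritical} line by line. Only the analytic input changes: the critical part of Proposition~\ref{thm:frozen-dyadic-sewing} and Proposition~\ref{prop:critical-dyadic-family-quotient-compatible} replace their subcritical counterparts.

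\textbf{Linearity.} For each fixed $\tau$, the freezing map $h\mapsto h^\tau=\Theta_\tau h$ is linear. By Proposition~\ref{prop:frozen-regularity} with $\gamma=1$, it sends $Z\hat C^{1}(\Delta^3_T;B_\alpha)$ into $ZC^{1}(\Delta^3_\tau;B_\alpha)$. The dyadic frozen sewing map $\Lambda_\tau^{\mathrm d}:ZC^{1}(\Delta^3_\tau;B_\alpha)\to C^1_{\log}(\Delta^2_\tau;B_\alpha)$ is linear by Proposition~\ref{thm:frozen-dyadic-sewing}. Hence $h\mapsto r^\tau_{\mathrm d}(h)$ is linear. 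Passing to classes modulo ordinary cocycles is also linear, so the family-valued assignment \eqref{eq:critical-quotient-semigroup-sewing-map} is linear.

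\textbf{Well-definedness.} Proposition~\ref{prop:critical-dyadic-family-quotient-compatible} says exactly that $([r^\tau_{\mathrm d}(h)]_\tau)_\tau$ belongs to $\mathfrak Q^{1,\alpha}_{\mathrm{fr},\log}$. This covers membership in each $\mathfrak C^1_{\log}(\Delta^2_\tau;B_\alpha)$, the compatibility $\operatorname{Res}_{\tau,\tau'}=\mathcal T_{\tau,\tau'}$ (coming from $h^\tau|_{\Delta^3_{\tau'}}=S_{\tau-\tau'}h^{\tau'}$), and the uniform bound. That bound is precisely item (c).

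\textbf{Continuity.} Item (c) gives continuity for the seminorm topology, since the map is linear and bounded.

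\textbf{Item (a).} Take any representative $r^\tau$ of the class. Then $r^\tau-r^\tau_{\mathrm d}(h)\in ZC(\Delta^2_\tau;B_\alpha)$, so $\delta r^\tau=\delta r^\tau_{\mathrm d}(h)=h^\tau$ by Theorem~\ref{prop:dyadic-frozen-defects} (critical case).

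\textbf{Item (b).} Specialize (a) to $\tau=t$ and use $h^t_{tus}=S_0h_{tus}=h_{tus}$.

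\textbf{Main point to check.} There is no real obstacle. The one thing to verify is that the constants in the critical estimates of Lemma~\ref{prop:localized-ordinary-dyadic} are uniform in $\tau\in[0,T]$, so that the supremum over $\tau$ in (c) is finite. This uniformity is recorded there, giving the constant $C_TM_{\alpha,T}$.
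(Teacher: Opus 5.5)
Your proposal is correct and takes essentially the same route as the paper. The paper also defers to the subcritical argument: Proposition~\ref{prop:critical-dyadic-family-quotient-compatible} gives membership in $\mathfrak Q_{\mathrm{fr},\log}^{1,\alpha}$ and the bound in (c); linearity of freezing and of the dyadic operator gives linearity; $\delta r_{\mathrm d}^\tau(h)=h^\tau$ gives (a); and setting $\tau=t$ with $h^t_{tus}=h_{tus}$ gives (b).
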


\begin{proof}
The proof is the same as in the subcritical case, with $\mathfrak Q_{\mathrm{fr},\log}^{1,\alpha}$ replacing $\mathfrak Q_{\mathrm{fr}}^{\gamma,\alpha}$.  Proposition~\ref{prop:critical-dyadic-family-quotient-compatible} shows that the family
\[
\bigl([r_{\mathrm d}^{\tau}(h)]_\tau\bigr)_{\tau\in[0,T]}
\]
belongs to the critical compatible quotient space and satisfies the stated estimate.  Linearity follows from the linearity of freezing and of the dyadic ordinary Sewing operator, and the defect identity follows from
$
\delta r_{\mathrm d}^{\tau}(h)=h^\tau .
$
The terminal identity is obtained by taking $\tau=t$, since $h^t_{tus}=h_{tus}$.
\end{proof}

The endpoint quotient map is the logarithmic analogue of the subcritical map from Section~\ref{sec:quotient-semigroup}.  The next corollary identifies its quotient
class directly through any semigroup increment of the defect.

\begin{coro}\label{cor:critical-quotient-semigroup-intrinsic}
Let $\alpha\in\mathbb R$ and $h\in Z\hat C^{1}(\Delta^3_T;B_\alpha)$. Assume that $B\in C(\Delta^2_T;B_\alpha)$ and $\hat\delta B = h$. Then, for every $\tau\in[0,T]$, 
\begin{equation*}
\bigl(\hat\Lambda_{S,1,\alpha}^{\mathrm q}h\bigr)^\tau=[\Theta_\tau B]_\tau\in \mathfrak C^{1}_{\mathrm{log}}(\Delta^2_{\tau};B_\alpha).
\end{equation*}
Moreover, for every $\varepsilon\in(0,1)$, the same underlying class satisfies
\begin{equation*}
\bigl(\hat\Lambda_{S,1,\alpha}^{\mathrm q}h\bigr)^\tau=[\Theta_\tau B]_\tau\in \mathfrak C^{1-\varepsilon}(\Delta^2_{\tau};B_\alpha).
\end{equation*}
In particular,
\begin{equation*}
\bigl(\hat\Lambda_{S,1,\alpha}^{\mathrm q}h\bigr)^\tau=[b^\tau(h)]_\tau, \quad b^\tau(h)_{ts}:=-h^\tau_{ts0}.
\end{equation*}
Consequently, the critical quotient class is independent both of the chosen semigroup increment $B$ and of the dyadic normalization used in Section~\ref{sec:dyadic-frozen}.
\end{coro}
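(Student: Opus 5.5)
The plan is to mirror the proof of Corollary~\ref{cor:quotient-semigroup-intrinsic}. The quotient relation \eqref{eq:frozen-equivalence-relation} is purely algebraic (modulo all ordinary cocycles in $C(\Delta^2_\tau;B_\alpha)$). So equality of classes reduces to equality of ordinary defects, and membership in the analytic quotient spaces follows once one representative with the required regularity is exhibited.

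\emph{Step 1: the class $[\Theta_\tau B]_\tau$.} Fix $\tau\in[0,T]$. By Proposition~\ref{prop:frozen-intertwining} and $\hat\delta B=h$, we have $\delta(\Theta_\tau B)=\Theta_\tau(\hat\delta B)=\Theta_\tau h=h^\tau$. By Theorem~\ref{prop:dyadic-frozen-defects} in the case $\gamma=1$, also $\delta r_{\mathrm d}^\tau(h)=h^\tau$. Hence $\Theta_\tau B-r_{\mathrm d}^\tau(h)\in ZC(\Delta^2_\tau;B_\alpha)$, and so $[\Theta_\tau B]_\tau=[r_{\mathrm d}^\tau(h)]_\tau=(\hat\Lambda^{\mathrm q}_{S,1,\alpha}h)^\tau$ by \eqref{eq:critical-quotient-semigroup-sewing-map}.

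\emph{Step 2: membership in the analytic quotient spaces.} The class lies in $\mathfrak C^1_{\log}(\Delta^2_\tau;B_\alpha)$ because it contains $r_{\mathrm d}^\tau(h)\in C^1_{\log}(\Delta^2_\tau;B_\alpha)$ (Theorem~\ref{prop:dyadic-frozen-defects}). Note that $\Theta_\tau B$ itself need not be regular, since $B$ is only continuous. This is why the class must be identified through the dyadic representative. For the second display, the same underlying set $[r_{\mathrm d}^\tau(h)]_\tau$ also meets $C^{1-\varepsilon}(\Delta^2_\tau;B_\alpha)$, by the estimate $\|r^\tau_{\mathrm d}(h)\|_{1-\varepsilon,\alpha;\tau}\le C_{\varepsilon,T}M_{\alpha,T}\|h\|_{1,\alpha}$ or, equivalently, Corollary~\ref{cor:critical-dyadic-family-subcritical-traces}. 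Since the equivalence relation is the same in both quotients, the class lies in $\mathfrak C^{1-\varepsilon}(\Delta^2_\tau;B_\alpha)$.

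\emph{Step 3: the explicit representative.} Set $B^h_{ts}:=-h_{ts0}$. Lemma~\ref{prop:exactness} with $n=1$ gives $f_t=(-1)^2h_{t0}$, which is the wrong arity. I would therefore directly check $\hat\delta B^h=h$ from $(\hat\delta h)_{tus0}=0$. Expanding via $(\hat\delta h)_{t_1t_2t_3t_4}=-h_{t_2t_3t_4}+h_{t_1t_3t_4}-h_{t_1t_2t_4}+h_{t_1t_2t_3}-a_{t_1t_2}h_{t_2t_3t_4}$ gives
\[
0=-S_{t-u}h_{us0}+h_{ts0}-h_{tu0}+h_{tus}.
\]
This is exactly $(\hat\delta B^h)_{tus}=h_{tus}$. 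This sign bookkeeping is the only step needing care. Then $\Theta_\tau B^h=b^\tau(h)$ by definition of $h^\tau$, and Step~1 with $B=B^h$ yields the formula.

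\emph{Step 4: independence.} Independence of $B$ follows because Step~1 gives the same class for any two increments with defect $h$. Independence of the normalization follows because any other frozen representative with ordinary defect $h^\tau$ differs from $r_{\mathrm d}^\tau(h)$ by an ordinary cocycle.
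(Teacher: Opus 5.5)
Your proposal is correct and follows essentially the paper's own argument. Equality of ordinary defects identifies $[\Theta_\tau B]_\tau$ with $[r^\tau_{\mathrm d}(h)]_\tau$, the dyadic representative supplies both the $C^1_{\log}$ and $C^{1-\varepsilon}$ memberships, and specializing to $B^h_{ts}=-h_{ts0}$ gives the explicit class. One small correction: since $h$ is a $3$-increment, Lemma~\ref{prop:exactness} applies with $n=2$ (not $n=1$) and yields exactly $f_{ts}=-h_{ts0}=B^h_{ts}$, so your direct check of $\hat\delta B^h=h$ is correct but just reproduces that lemma's proof.
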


\begin{proof}
Let \(B\in C(\Delta_T^2;B_\alpha)\) satisfy \(\hat\delta B=h\).  Freezing gives $\delta(\Theta_\tau B)=h^\tau$. By construction, $\delta r_{\mathrm d}^{\tau}(h)=h^\tau$ .
Hence \(\Theta_\tau B-r_{\mathrm d}^{\tau}(h)\in ZC(\Delta^2_\tau;B_\alpha)\), and
therefore
\[
(\hat\Lambda_{S,1,\alpha}^{\mathrm q}h)^\tau
=
[r_{\mathrm d}^{\tau}(h)]_\tau
=
[\Theta_\tau B]_\tau .
\]
Since \(r_{\mathrm d}^{\tau}(h)\) belongs to the logarithmic endpoint class, and $$C_{\log}^{1}(\Delta_\tau^2;B_\alpha)\hookrightarrow C^{1-\varepsilon}(\Delta_\tau^2;B_\alpha)$$ for every $\varepsilon\in(0,1)$, the same quotient class also belongs to $\mathfrak C^{1-\varepsilon}(\Delta_\tau^2;B_\alpha)$.
Taking \(B=B^h\), where \(B^h_{ts}:=-h_{ts0}\), gives
\[
(\hat\Lambda_{S,1,\alpha}^{\mathrm q}h)^\tau=[b^\tau(h)]_\tau, \quad b^\tau(h)_{ts}:=-h^\tau_{ts0}.
\]
The independence of the chosen two increment and of the dyadic normalization follows because any two representatives with the same ordinary defect differ by an ordinary cocycle.
\end{proof}

\begin{remark}\label{rem:quotient-versus-representatives}
We complete the subcritical and critical semigroup Sewing constructions at the quotient level.  The canonical objects are compatible frozen quotient classes, not necessarily compatible actual representatives.  The next section addresses the additional analytic conditions under which such quotient classes can be lifted to concrete semigroup increments.
\end{remark}

\section{Representative selection and actual semigroup Sewing}
\label{sec:representative-selection-sewing}

In this section, we introduce a scale-dependent selection mechanism and use it to construct actual semigroup increments from admissible defects.

\subsection{Scale-dependent defect splitting}
\label{sec:scale-splitting}

The splitting is imposed only along the residuals generated by the construction, not on arbitrary vectors of $B_\alpha$. Let $H\in Z\hat C^{\gamma}(\Delta_T^3;B_\alpha)$. By Lemma~\ref{prop:exactness}, define the semigroup increment $B^H_{ts}:=-H_{ts0}$, so that $\hat\delta B^H=H$.  In the dyadic recursion, if $b=(a+c)/2$, the semigroup identity requires
\[
R_{ca}=R_{cb}+S_{c-b}R_{ba}+H_{cba}.
\]
Thus the splitting is imposed only on the residual vector $R_{ca}-H_{cba}$, not on arbitrary vectors of $B_\alpha$.

\begin{definition}
\label{def:defect-dependent-splitting}
Let $0<\gamma\le1$ and  $H\in Z\hat C^{\gamma}(\Delta_T^3;B_\alpha)$.  A defect-dependent scale splitting for $H$ at level $B_\alpha$ is a recursive rule $\mathsf s$ which acts on the residuals generated by the dyadic construction as follows.  If a dyadic interval $[a,c]$ has midpoint $b=(a+c)/2$ and carries a residual $Q_{ca}\in B_\alpha$, then $\mathsf s$ assigns two child residuals $Q^{\mathsf s}_{cb}, Q^{\mathsf s}_{ba} \in B_\alpha$, satisfying
\begin{equation}
\label{eq:scale-splitting-identity}
Q_{ca}=Q^{\mathsf s}_{cb}+S_{c-b}Q^{\mathsf s}_{ba}+H_{cba}.
\end{equation}
If \(0<\gamma<1\), the rule $\mathsf s$ is called {\bf subcritical admissible} if there exist constants $0\le\theta<2^{-\gamma}$ and $C_H\ge0$ such that, for every residual
$Q_{ca}$ generated on a dyadic interval $[a,c]$ of length $L=c-a$,
\begin{equation}
\label{eq:subcritical-scale-splitting-estimate}
\max\bigl\{\norm{Q^{\mathsf s}_{cb}}_{B_\alpha}, \norm{Q^{\mathsf s}_{ba}}_{B_\alpha}\bigr\}\le \theta\norm{Q_{ca}}_{B_\alpha}+C_H L^\gamma .
\end{equation}
If $\gamma=1$, the rule $\mathsf s$ is called {\bf critical admissible} if there exist constants $C_H,C_\ast,\eta>0$ such that, for every residual $Q_{ca}$ generated on a dyadic interval $[a,c]$ of length $L=c-a$,
\begin{equation}
\label{eq:critical-scale-splitting-estimate}
\max\bigl\{\norm{Q^{\mathsf s}_{cb}}_{B_\alpha}, \norm{Q^{\mathsf s}_{ba}}_{B_\alpha}\bigr\}\le\left(\frac12+C_\ast L^\eta\right) \norm{Q_{ca}}_{B_\alpha}+C_H L .
\end{equation}
\end{definition}

\begin{remark}
The splitting rule is imposed only on the residuals generated by the dyadic construction, not on arbitrary vectors of $B_\alpha$.  The two admissibility conditions reflect the dyadic estimates needed later: in the subcritical case $\theta<2^{-\gamma}$ gives contraction faster than the target scale $L^\gamma$, while at $\gamma=1$ the coefficient is borderline, equal to $1/2$ up to a summable scale error, producing the logarithmic loss.
\end{remark}

To turn the defect-wise splitting condition into a linear Sewing map, we now fix a linear class of defects on which the same splitting rule and uniform estimates are available.

\begin{definition}
\label{def:admissible-selection-domain}
Let $\alpha\in\R$ and fix a linear splitting rule $\mathsf s$. 
\begin{enumerate}
\item For $0<\gamma<1$, a {\bf subcritical admissible selection domain} is a linear subspace $\D^{\mathsf s}_{\gamma}(\Delta_T^3;B_\alpha)\subset Z\hat C^{\gamma}(\Delta_T^3;B_\alpha)$ such that, for every $H\in\D^{\mathsf s}_{\gamma}(\Delta_T^3;B_\alpha)$, the dyadic recursion
is well defined, \eqref{eq:scale-splitting-identity} holds at each splitting step, and \eqref{eq:subcritical-scale-splitting-estimate} holds with $\theta<2^{-\gamma}$ and
$C_H\le C_{\mathsf s}\norm{H}_{\gamma,\alpha}$.

\item At $\gamma=1$, a {\bf critical admissible selection domain} is a linear subspace $$\D^{\mathsf s}_{1}(\Delta_T^3;B_\alpha)\subset Z\hC^{1}(\Delta_T^3;B_\alpha)$$
such that, for every $H\in\D^{\mathsf s}_{1}(\Delta_T^3;B_\alpha)$, the dyadic recursion is well defined, \eqref{eq:scale-splitting-identity} holds at each splitting step,
and \eqref{eq:critical-scale-splitting-estimate} holds with fixed $C_\ast,\eta>0$ and $C_H\le C_{\mathsf s}\norm{H}_{1,\alpha}$.
\end{enumerate}
In both cases we require the residual continuity criterion
\begin{equation}
\label{eq:residual-continuity-criterion}
\lim_{h\downarrow0}\sup_{\substack{s\le t,\ s,t\in\D\\ t+h\le T}}\norm{(S_h-\mathrm{Id})R^{\mathsf s}_{ts}}_{B_\alpha}=0
\end{equation}
for every $H$ in the corresponding domain.
\end{definition}

\begin{remark}
\label{rem:individual-versus-linear}
For a single defect, the construction only gives a representative with constants depending on that defect.  A continuous linear Sewing map $\hat\Lambda^{\mathsf s}$ is obtained only after fixing a linear splitting rule on a linear admissible selection domain.  This is why Definition~\ref{def:admissible-selection-domain} is needed.
\end{remark}

\subsection{Direct dyadic construction of semigroup Sewing maps}
\label{sec:direct-dyadic}

We now perform the representative-selection step directly in the semigroup complex, producing strict semigroup representatives rather than frozen quotient classes.
Let $H\in Z\hat C^{\gamma}(\Delta_T^3;B_\alpha)$ and let $\D_n:=\{kT2^{-n}:0\le k\le 2^n\}$, $\D:=\bigcup_{n\ge0}\D_n$.  The recursion constructs a dyadic path $I^{\mathsf s}(H)$, and the selected residual increment is always understood as
\begin{equation}
\label{eq:residual-from-path}
R^{\mathsf s}_{ts}:=B^H_{ts}-I^{\mathsf s}_t+S_{t-s}I^{\mathsf s}_s, \quad s,t\in\D,\ s\le t.
\end{equation}
Set $I^{\mathsf s}_0(H):=0$ and $I^{\mathsf s}_T(H):=B^H_{T0}$. Then $R^{\mathsf s}_{T0}=0$.  Suppose that $I^{\mathsf s}$ has been defined at the endpoints $a<c$ of a dyadic interval and let $b=(a+c)/2$.  The parent residual is 
\[
Q_{ca}:=R^{\mathsf s}_{ca}.
\]
Apply the splitting rule $\mathsf s$ to obtain child residuals $Q^{\mathsf s}_{cb}$ and $Q^{\mathsf s}_{ba}$ satisfying \eqref{eq:scale-splitting-identity}.  Define the midpoint value by
\begin{equation}
\label{eq:midpoint-path-scale}
I^{\mathsf s}_b(H):=B^H_{ba}+S_{b-a}I^{\mathsf s}_a(H)-Q^{\mathsf s}_{ba}.
\end{equation}
The next lemma records the algebraic consequence of this choice.

\begin{lemma}
\label{lem:local-compatibility}
For every dyadic parent interval $[a,c]$ with midpoint $b$, the residuals produced by the dyadic recursion satisfy
\begin{equation}
\label{eq:local-compatibility}
R^{\mathsf s}_{ca}=R^{\mathsf s}_{cb}+S_{c-b}R^{\mathsf s}_{ba}+H_{cba}.
\end{equation}
Consequently, $\hdd R^{\mathsf s}=H$ on dyadic triples.
\end{lemma}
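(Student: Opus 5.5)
The plan is to show that the recursion produces residuals which coincide with the child residuals chosen by the splitting rule. Once this is established, \eqref{eq:local-compatibility} is simply the splitting identity \eqref{eq:scale-splitting-identity}. Concretely, fix a dyadic parent interval $[a,c]$ with midpoint $b$. By construction the parent residual is $Q_{ca}=R^{\mathsf s}_{ca}$, and the rule $\mathsf s$ returns $Q^{\mathsf s}_{cb},Q^{\mathsf s}_{ba}$ with
\[
R^{\mathsf s}_{ca}=Q^{\mathsf s}_{cb}+S_{c-b}Q^{\mathsf s}_{ba}+H_{cba}.
\]
It therefore suffices to prove the two identities $R^{\mathsf s}_{ba}=Q^{\mathsf s}_{ba}$ and $R^{\mathsf s}_{cb}=Q^{\mathsf s}_{cb}$, where $R^{\mathsf s}$ is given by \eqref{eq:residual-from-path}.

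The first identity follows at once from the midpoint definition \eqref{eq:midpoint-path-scale}. Indeed,
\[
R^{\mathsf s}_{ba}=B^H_{ba}-I^{\mathsf s}_b+S_{b-a}I^{\mathsf s}_a=Q^{\mathsf s}_{ba}.
\]

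For the second identity I will use $\hat\delta B^H=H$ (Lemma~\ref{prop:exactness}), that is,
\[
B^H_{ca}=B^H_{cb}+S_{c-b}B^H_{ba}+H_{cba}.
\]
Expanding with \eqref{eq:residual-from-path} and the semigroup property $S_{c-b}S_{b-a}=S_{c-a}$, one checks that the path terms telescope:
\[
R^{\mathsf s}_{ca}-R^{\mathsf s}_{cb}-S_{c-b}R^{\mathsf s}_{ba}
=B^H_{ca}-B^H_{cb}-S_{c-b}B^H_{ba}=H_{cba}.
\]
Here the contributions $-I_c+S_{c-a}I_a$, $+I_c-S_{c-b}I_b$ and $+S_{c-b}I_b-S_{c-a}I_a$ cancel exactly. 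This gives \eqref{eq:local-compatibility} directly for the $R^{\mathsf s}$ built from the path, whatever value $I_b$ takes. Comparing with the splitting identity and using $R^{\mathsf s}_{ba}=Q^{\mathsf s}_{ba}$ then yields $R^{\mathsf s}_{cb}=Q^{\mathsf s}_{cb}$, so the selected children really are the residuals carried into the next generation. This is what makes the recursion consistent, and it will be needed for the later estimates.

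For the consequence $\hat\delta R^{\mathsf s}=H$ on dyadic triples, I note that the telescoping computation above never used that $b$ is a midpoint. For arbitrary dyadic $s\le u\le t$, the same expansion together with $\hat\delta B^H=H$ gives
\[
(\hat\delta R^{\mathsf s})_{tus}=(\hat\delta B^H)_{tus}=H_{tus},
\]
since the part $-I_t+S_{t-s}I_s$ is $\hat\delta$ of the one-increment $-I$, and $\hat\delta^2=0$ by Lemma~\ref{prop:hdelta-square-zero}. The only step that needs care is the bookkeeping of the semigroup factors in the telescoping; there is no analytic obstacle.
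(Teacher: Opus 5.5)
Your proof is correct, and it uses the same two ingredients as the paper: the midpoint formula \eqref{eq:midpoint-path-scale}, and the identity $R^{\mathsf s}=B^H-\hat\delta I^{\mathsf s}$ together with $\hat\delta B^H=H$ and $\hat\delta^2=0$. The logical order is reversed, however.

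The paper says $I^{\mathsf s}_b$ was chosen so that both computed children coincide with the split children, and then reads \eqref{eq:local-compatibility} off the splitting identity \eqref{eq:scale-splitting-identity}. It uses the coboundary structure only afterwards, for $\hat\delta R^{\mathsf s}=H$.

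You instead obtain \eqref{eq:local-compatibility} directly from the coboundary structure, valid for any value of $I^{\mathsf s}_b$. You then use \eqref{eq:scale-splitting-identity} only to deduce $R^{\mathsf s}_{cb}=Q^{\mathsf s}_{cb}$.

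Your ordering clarifies a point the paper passes over. The midpoint formula directly forces only the left child, $R^{\mathsf s}_{ba}=Q^{\mathsf s}_{ba}$. The coincidence of the right child, which the paper asserts without argument, really requires the telescoping identity first.

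Your version also shows that the splitting identity plays no role in $\hat\delta R^{\mathsf s}=H$. Its only job is to make the residuals actually propagated to the next dyadic level equal the split children. The admissibility bounds are stated for those split children, and this identification is what the recursion $M_{n+1}\le\theta M_n+C_H(T2^{-n})^\gamma$ in Theorem~\ref{thm:subcritical-dyadic-sewing} implicitly relies on.
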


\begin{proof}
The midpoint value \(I_b^{\mathsf s}\) was chosen so that the child residuals computed from \eqref{eq:residual-from-path} coincide with those produced by the
splitting rule; hence \eqref{eq:scale-splitting-identity} gives \eqref{eq:local-compatibility}.  Since $R^{\mathsf s}=B^H-\hdd I^{\mathsf s}$, $\hdd B^H=H$ and $\hdd^2=0$, we obtain $\hdd R^{\mathsf s}=H$.
\end{proof}

We first treat the case $0<\gamma<1$.  The strict contraction condition $\theta<2^{-\gamma}$ in \eqref{eq:subcritical-scale-splitting-estimate} yields a genuine H\"older bound.

\begin{theorem}
\label{thm:subcritical-dyadic-sewing}
Let $0<\gamma<1$, $\alpha\in\R$ and let $\D^{\mathsf s}_{\gamma}(\Delta_T^3;B_\alpha)$ be a subcritical admissible selection domain in the sense of Definition~\ref{def:admissible-selection-domain}.  Then the dyadic recursion above extends uniquely by continuity to a continuous linear map
\begin{equation*}
\hat\Lambda^{\mathsf s}_{S,\gamma,\alpha}:\D^{\mathsf s}_{\gamma}(\Delta_T^3;B_\alpha) \to C^{\gamma}(\Delta_T^2;B_\alpha), \quad H\mapsto \hat\Lambda^{\mathsf s}_{S,\gamma,\alpha}H:= R^{\mathsf s}(H),
\end{equation*}
satisfying
\begin{equation}
\label{eq:subcritical-right-inverse}
\hdd\hat\Lambda^{\mathsf s}_{S,\gamma,\alpha}H=H.
\end{equation}
Moreover,
\begin{equation}
\label{eq:subcritical-estimate}
\norm{\hat\Lambda^{\mathsf s}_{S,\gamma,\alpha}H}_{\gamma,\alpha}\le C_{\gamma,\theta,T,S,\mathsf s}\norm{H}_{\gamma,\alpha}.
\end{equation}
For an individual defect outside a fixed linear domain, the same estimate holds with the right-hand side replaced by $C_{\gamma,\theta,T,S}(\norm{H}_{\gamma,\alpha}+C_H)$.
\end{theorem}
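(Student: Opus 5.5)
We follow the classical dyadic sewing scheme: bound the residuals level by level, extend from dyadic pairs to all pairs by a chaining argument, and then read off linearity and the defect identity.

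\textbf{Step 1: dyadic bound.} Let $q_n:=\sup\{\norm{R^{\mathsf s}_{ca}}_{B_\alpha}: [a,c]\text{ a dyadic interval of level } n\}$, so these intervals have length $L_n=T2^{-n}$. The recursion starts from $R^{\mathsf s}_{T0}=0$, so $q_0=0$. By Lemma~\ref{lem:local-compatibility}, the child residuals computed from \eqref{eq:residual-from-path} are exactly the ones produced by the splitting rule. Hence \eqref{eq:subcritical-scale-splitting-estimate} gives
\[
q_{n+1}\le \theta q_n + C_H L_n^\gamma .
\]
Set $p_n:=q_n/L_n^\gamma$. Since $L_n^\gamma=2^{\gamma}L_{n+1}^\gamma$, the recursion becomes $p_{n+1}\le 2^\gamma\theta\, p_n+2^\gamma C_H$. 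Because $2^\gamma\theta<1$, this yields
\[
\sup_n p_n\le \frac{2^\gamma C_H}{1-2^\gamma\theta}.
\]
On a linear admissible selection domain we have $C_H\le C_{\mathsf s}\norm{H}_{\gamma,\alpha}$, which gives the dyadic-interval version of \eqref{eq:subcritical-estimate}. For an individual defect the same computation gives the version with $C_H$ on the right.

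\textbf{Step 2: arbitrary dyadic pairs.} Take $s<t$ in $\D$. Choose $n$ with $L_{n+1}<t-s\le L_n$, and decompose $[s,t]$ canonically into at most two dyadic intervals of each level $m\ge n+1$, together with at most one interval of level $n$. Iterating the identity $\hdd R^{\mathsf s}=H$ along this chain, exactly as in \eqref{eq:partition-representation} but now with defect terms, gives
\[
R^{\mathsf s}_{ts}=\sum_i S_{t-t_{i+1}}R^{\mathsf s}_{t_{i+1}t_i}+\sum_i S_{t-t_{i+1}} H_{t_{i+1}t_i s}.
\]
Here $\hdd R^{\mathsf s}=H$ on all triples of dyadic points follows from $R^{\mathsf s}=B^H-\hdd I^{\mathsf s}$ on $\D$ together with $\hdd^2=0$, not only on parent/midpoint triples.

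\textbf{Step 3: bound the chain.} For the first sum, use $M_{\alpha,T}$ and a geometric series in $2^{-m\gamma}$; this is bounded by $C|t-s|^\gamma$. The defect terms are the \textbf{main obstacle}: there are unboundedly many of them, and $H$ only has total order $\gamma<1$, so a naive bound does not sum. I would handle this by writing $H=\sum_j H_j$ with $\norm{H_j}_{\rho_j,\gamma-\rho_j;\alpha}$ finite. In the chain, the short side $t_{i+1}-t_i\approx 2^{-m}$ contributes $2^{-m\rho_j}$ and the long side contributes at most $|t-s|^{\gamma-\rho_j}$, so each piece sums geometrically and the total is $\lesssim |t-s|^\gamma\norm{H}_{\gamma,\alpha}$. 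A cleaner alternative is to group the chain and telescope: restructure the expansion so that each defect term is taken at adjacent dyadic scales, which again produces geometric sums. If even this fails, I would fall back on the argument of \cite{BZ22}, transported by $S$.

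\textbf{Step 4: extension, linearity and the defect identity.} The Hölder bound on $\D$ does not by itself give uniform continuity of $R^{\mathsf s}$ in both variables, because $S_{t-s}$ enters $\hdd I$. This is exactly why the residual continuity criterion \eqref{eq:residual-continuity-criterion} is imposed. Combined with the bound on $\D$ and the strong continuity of $S$, it gives uniform continuity of $(t,s)\mapsto R^{\mathsf s}_{ts}$ on dyadic pairs, hence a unique continuous extension to $\Delta_T^2$ with the same $\gamma$-norm.

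The identity \eqref{eq:subcritical-right-inverse} holds on dense dyadic triples, and both sides are continuous, so it extends to all of $\Delta_T^3$. Linearity follows because $H\mapsto B^H$ and the splitting rule are linear, so each recursive step is linear, and limits preserve linearity. The estimate then gives continuity of $\hat\Lambda^{\mathsf s}_{S,\gamma,\alpha}$.
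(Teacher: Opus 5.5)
Your proposal follows the paper's proof step for step. The shared steps are the level recursion $M_{n+1}\le\theta M_n+C_H(T2^{-n})^\gamma$ closed by $\theta<2^{-\gamma}$, chaining over the canonical dyadic decomposition with at most two intervals per scale, extension to $\Delta_T^2$ through the residual continuity criterion, and the identity $\hat\delta R^{\mathsf s}=H$ and linearity by density. In Step~3, however, only your second option works; the first does not give the stated estimate.

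With the expansion anchored at $s$, consider the terms $H_{t_{i+1}t_is}$ from the part of the chain to the right of the coarsest dyadic point $c\in[s,t]$. Their short gap $t_{i+1}-t_i$ sits in the first slot, while $t_i-s\ge c-s$ stays comparable to $|t-s|$. Summing $|t_{i+1}-t_i|^{\rho_j}|t-s|^{\gamma-\rho_j}$ over scales therefore costs a factor of order $(1-2^{-\rho_j})^{-1}$, which can be as large as the number of chain terms when $\rho_j$ is small. Since $\rho_j\in(0,\gamma)$ is unrestricted in the infimum defining $\|H\|_{\gamma,\alpha}$, this gives no bound of the form $C\|H\|_{\gamma,\alpha}$. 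You also cannot use different expansion orders for different pieces $H_j$, because the pieces are not cocycles.

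The regrouping you sketch as the alternative is the right fix; make it explicit as follows.
\begin{itemize}
\item Expand $R^{\mathsf s}_{cs}$ anchored at $s$; there the dyadic intervals increase toward $c$.
\item Expand $R^{\mathsf s}_{tc}$ anchored at $t$, via $R^{\mathsf s}_{tu_k}=R^{\mathsf s}_{tu_{k+1}}+S_{t-u_{k+1}}R^{\mathsf s}_{u_{k+1}u_k}+H_{tu_{k+1}u_k}$; there the intervals decrease toward $t$.
\item Finish with $R^{\mathsf s}_{ts}=R^{\mathsf s}_{tc}+S_{t-c}R^{\mathsf s}_{cs}+H_{tcs}$.
\end{itemize}
Every defect term $H_{xyz}$ then satisfies $|x-z|\le 2|I|$, where $I$ is the dyadic interval being absorbed. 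Because $|x-y|^{\rho}|y-z|^{\gamma-\rho}\le|x-z|^\gamma$, the bound $\|H_{xyz}\|_{B_\alpha}\le\|H\|_{\gamma,\alpha}|x-z|^\gamma$ holds uniformly over decompositions. This gives exactly the bound $C\|H\|_{\gamma,\alpha}\sum_j|I_j|^\gamma$ that the paper asserts.

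Each $R$- or $H$-term in this expansion carries at most one semigroup factor, since $S_aS_b=S_{a+b}$. So the only operator constant that enters is $M_{\alpha,T}$, with no compounding across the $\sim\log$ many steps.
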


\begin{proof}
We first prove the dyadic estimate. Let
\[
M_n:=\max_{0\le k<2^n}\norm{R^{\mathsf s}_{t_{k+1}^n,t_k^n}}_{B_\alpha}, \quad  t_k^n:=kT2^{-n}.
\]
At level $0$, $R^{\mathsf s}_{T0}=0$, so $M_0=0$.  Let $[a,c]$ be a dyadic interval of length $L=T2^{-n}$ and midpoint $b$.  By \eqref{eq:subcritical-scale-splitting-estimate}, each child residual is bounded by $\theta\norm{R^{\mathsf s}_{ca}}_{B_\alpha}+C_H L^\gamma$. Hence
\begin{equation*}
M_{n+1}\le \theta M_n+C_H(T2^{-n})^\gamma.
\end{equation*}
Iterating gives
\[
M_n\le C_H T^\gamma
\sum_{j=0}^{n-1}\theta^{n-1-j}2^{-j\gamma}.
\]
Since $\theta<2^{-\gamma}$, the sum is bounded by a constant times $2^{-n\gamma}$.  Therefore
\begin{equation}
\label{eq:consecutive-bound-scale}
M_n\le C_{\gamma,\theta}C_H(T2^{-n})^\gamma.
\end{equation}

Let $s<t$ be dyadic.  Decompose $[s,t]$ into the disjoint union of maximal dyadic intervals $I_j$.  Repeated use of \eqref{eq:local-compatibility}, the semigroup bound
and the definition of the $C^{\gamma}(\Delta_T^3;B_\alpha)$ norm yields
\[
\norm{R^{\mathsf s}_{ts}}_{B_\alpha}\le C_{S,T}\sum_j \norm{R^{\mathsf s}_{I_j}}_{B_\alpha}+C_{S,T}\norm{H}_{\gamma,\alpha}\sum_j |I_j|^\gamma.
\]
A standard dyadic decomposition has at most two intervals at each scale, hence
\[
\sum_j |I_j|^\gamma\le C_\gamma |t-s|^\gamma.
\]
Using \eqref{eq:consecutive-bound-scale}, we obtain
\begin{equation}
\label{eq:dyadic-holder-bound-scale}
\norm{R^{\mathsf s}_{ts}}_{B_\alpha}\le C_{\gamma,\theta,T,S}\bigl(\norm{H}_{\gamma,\alpha}+C_H\bigr)|t-s|^\gamma
\end{equation}
for all dyadic $s<t$.

We now extend the dyadic residuals to all times.  Apart from the usual Hölder control, one must handle the semigroup term created by varying the upper endpoint; this is precisely the role of \eqref{eq:residual-continuity-criterion}. For dyadic triples $s\le u\le t$, Lemma~\ref{lem:local-compatibility} gives
\begin{equation*}
R^{\mathsf s}_{ts}=R^{\mathsf s}_{tu}+S_{t-u}R^{\mathsf s}_{us}+H_{tus}.
\end{equation*}
Thus, if $s\le t\le t'$ are dyadic, then
\[
R^{\mathsf s}_{t's}-R^{\mathsf s}_{ts}=R^{\mathsf s}_{t't}+H_{t'ts}+(S_{t'-t}-\mathrm{Id})R^{\mathsf s}_{ts},
\]
and the first two terms are small by \eqref{eq:dyadic-holder-bound-scale} and the H\"older bound on $H$, while the last term is small uniformly by \eqref{eq:residual-continuity-criterion}.  Similarly, if $s'\le s\le t$ are dyadic, then
\[
R^{\mathsf s}_{ts'}-R^{\mathsf s}_{ts}=S_{t-s}R^{\mathsf s}_{ss'}+H_{tss'},
\]
which is small by \eqref{eq:dyadic-holder-bound-scale} and the bound on $H$.
These two endpoint comparison estimates imply that the dyadic values are uniformly Cauchy under dyadic approximation of any point of $\Delta_T^2$. Hence $R^{\mathsf s}$ has a unique continuous extension to $\Delta_T^2$. The estimate \eqref{eq:dyadic-holder-bound-scale} passes to the limit, and the domain bound $C_H\le C_{\mathsf s}\|H\|_{\gamma,\alpha}$ gives \eqref{eq:subcritical-estimate} and the stated operator estimate.  Linearity follows from the linearity of $H\mapsto B^H$ and of the fixed splitting rule on $\D^{\mathsf s}_{\gamma,\alpha}$.
By Lemma~\ref{lem:local-compatibility}, $\hdd R^{\mathsf s}=H$ on dyadic triples.  After the extension, both sides are continuous on $\Delta_T^3$; hence passing to the limit along dyadic triples gives \eqref{eq:subcritical-right-inverse} for all $0\le s\le u\le t\le T$.
\end{proof}

The endpoint $\gamma=1$ is borderline.  The coefficient $1/2$ in \eqref{eq:critical-scale-splitting-estimate} leads to a logarithmic loss, while the perturbation $C_\ast L^\eta$ is summable along dyadic scales.

\begin{theorem}
\label{thm:critical-dyadic-sewing}
Let $\alpha\in\R$ and let $\D^{\mathsf s}_{1}(\Delta_T^3;B_\alpha)$ be a critical admissible selection domain in the sense of Definition~\ref{def:admissible-selection-domain}.  Then the dyadic recursion above extends uniquely by continuity to a continuous linear map
\begin{equation*}
\hat\Lambda^{\mathsf s}_{S,1,\alpha}:\D^{\mathsf s}_{1}(\Delta_T^3;B_\alpha)\to C_{\log}^{1}(\Delta_T^2;B_\alpha), \quad H\mapsto \hat\Lambda^{\mathsf s}_{S,1,\alpha}H:= R^{\mathsf s}(H),
\end{equation*}
satisfying
\begin{equation}
\label{eq:critical-right-inverse}
\hdd\hat\Lambda^{\mathsf s}_{S,1,\alpha}H=H.
\end{equation}
Moreover,
\begin{equation}
\label{eq:critical-log-estimate}
\norm{\hat\Lambda^{\mathsf s}_{S,1,\alpha}H}_{1,\log;\alpha}\le C_{T,S,C_\ast,\eta,\mathsf s} \norm{H}_{1,\alpha}.
\end{equation}
For an individual defect outside a fixed linear domain, the same estimate holds with the right-hand side replaced by $C_{T,S,C_\ast,\eta}(\norm{H}_{1,\alpha}+C_H)$.  Consequently, for every $0<\varepsilon<1$,
\begin{equation}
\label{eq:critical-subholder-estimate}
\norm{\hat\Lambda^{\mathsf s}_{S,1,\alpha}H}_{1-\varepsilon,\alpha}\le C_{\varepsilon,T,S,C_\ast,\eta,\mathsf s}\norm{H}_{1,\alpha}.
\end{equation}
\end{theorem}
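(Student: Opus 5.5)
The plan is to follow the proof of Theorem~\ref{thm:subcritical-dyadic-sewing} step by step. Only the dyadic recursion estimate changes, since the contraction coefficient is now $\tfrac12+C_\ast L^\eta$ rather than $\theta<2^{-\gamma}$.

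\textbf{Step 1: the recursion on dyadic levels.} Set $M_n:=\max_k\|R^{\mathsf s}_{t^n_{k+1}t^n_k}\|_{B_\alpha}$, with $M_0=0$ because $R^{\mathsf s}_{T0}=0$. By \eqref{eq:critical-scale-splitting-estimate} with $L_n=T2^{-n}$,
\[
M_{n+1}\le \Bigl(\tfrac12+C_\ast (T2^{-n})^\eta\Bigr)M_n+C_H T2^{-n}.
\]
Renormalize by setting $m_n:=2^nM_n/T$. This gives
\[
m_{n+1}\le \bigl(1+2C_\ast T^\eta 2^{-n\eta}\bigr)m_n+2C_H .
\]
The product $P:=\prod_{j\ge0}(1+2C_\ast T^\eta2^{-j\eta})$ is finite, since $\sum_j 2^{-j\eta}<\infty$. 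A discrete Gr\"onwall iteration then yields $m_n\le 2PC_H n$. Hence
\[
M_n\le C_{T,C_\ast,\eta}\,C_H\,(T2^{-n})\,n\le C\,C_H\,L_n\bigl(1+|\log L_n|\bigr).
\]
This is the logarithmic loss: it comes from the coefficient $\tfrac12$ being exactly borderline.

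\textbf{Step 2: arbitrary dyadic pairs.} For dyadic $s<t$, decompose $[s,t]$ into maximal dyadic intervals $I_j$, with at most two at each scale $2^{-k}T\le|t-s|$. Iterating Lemma~\ref{lem:local-compatibility} along this decomposition, with the semigroup bound $M_{\alpha,T}$, gives
\[
\|R^{\mathsf s}_{ts}\|\le C\sum_j\|R^{\mathsf s}_{I_j}\|+C\|H\|_{1,\alpha}\sum_j|I_j| .
\]
The first sum is bounded by $C\,C_H\sum_{k\ge k_0}2^{-k}T(1+k)$, where $2^{-k_0}T\sim|t-s|$. This sum is dominated by its first term, namely $|t-s|(1+|\log|t-s||)$. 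The second sum is at most $2|t-s|$.

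Two details need care here. First, the bound $\|H_{tus}\|\le C\|H\|_{1,\alpha}|t-s|$ must be justified from the decomposition norm \eqref{eq:c3-holder}; it follows because $|t-u|^\rho|u-s|^{1-\rho}\le|t-s|$. Second, the repeated use of \eqref{eq:local-compatibility} along a chain of intervals must not accumulate the defect terms with weights beyond $\sum|I_j|$. I expect this bookkeeping to be the main obstacle. To handle it, I would organize the chain so that each merge of two adjacent blocks contributes one $H$-term, bounded by the length of the merged block. These merged lengths still sum geometrically at each scale, so no extra logarithm appears.

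\textbf{Step 3: extension, linearity, and the consequences.} Extension to all of $\Delta_T^2$ is verbatim from the subcritical proof. The endpoint comparison identities
\[
R_{t's}-R_{ts}=R_{t't}+H_{t'ts}+(S_{t'-t}-\mathrm{Id})R_{ts},\qquad R_{ts'}-R_{ts}=S_{t-s}R_{ss'}+H_{tss'}
\]
together with the residual continuity criterion \eqref{eq:residual-continuity-criterion} and the log bound (which tends to $0$ as $|t-s|\to0$) give uniform Cauchy property. This produces a unique continuous extension, and both \eqref{eq:critical-log-estimate} and \eqref{eq:critical-right-inverse} pass to the limit by continuity. Replacing $C_H$ by $C_{\mathsf s}\|H\|_{1,\alpha}$ gives the operator estimate, and linearity follows from that of $H\mapsto B^H$ and of $\mathsf s$. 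Finally, \eqref{eq:critical-subholder-estimate} follows from the elementary inequality $r(1+|\log r|)\le C_{\varepsilon,T}r^{1-\varepsilon}$ for $r\in(0,T]$.
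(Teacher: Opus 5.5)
Your proposal is correct and follows essentially the same route as the paper. It uses the same renormalization $N_n=2^nM_n/T$ with a discrete Gr\"onwall bound $N_n\lesssim C_H(1+n)$, the same maximal dyadic decomposition for arbitrary dyadic pairs, the same endpoint-comparison identities with the residual continuity criterion for the extension, and the same inequality $r(1+|\log r|)\le C_{\varepsilon,T}r^{1-\varepsilon}$. Your Step~2 bookkeeping is a detail the paper only asserts, and it works provided that on each side of the top dyadic point you absorb the maximal intervals from the smallest upward (so merged lengths are geometric), and that you unroll the chain exactly so each term carries a single factor $S_r$ bounded by $M_{\alpha,T}$.
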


\begin{proof}
The proof follows the same structure as Theorem~\ref{thm:subcritical-dyadic-sewing}.  We only record the modified dyadic recurrence.  With the notation used there, \eqref{eq:critical-scale-splitting-estimate} gives
\[
M_{n+1}\le \left(\frac12+C_\ast (T2^{-n})^\eta\right)M_n+C_H T2^{-n}.
\]
Set $N_n:=2^nM_n/T$.  Then
\[
N_{n+1}\le \left(1+2C_\ast T^\eta2^{-n\eta}\right)N_n+2C_H.
\]
Since $\sum_{n\ge0}2^{-n\eta}<\infty$ and $N_0=0$, the discrete Gronwall inequality gives
\[
N_n\le C_{T,C_\ast,\eta}C_H(1+n).
\]
Thus
\begin{equation}
\label{eq:critical-consecutive-bound}
M_n\le C_{T,C_\ast,\eta}C_H(1+n)T2^{-n}.
\end{equation}
If $r\simeq T2^{-n}$, then $1+n\lesssim 1+|\log r|$. Thus \eqref{eq:critical-consecutive-bound} gives, on consecutive dyadic intervals,
\[
\|R^{\mathsf s}_{t_{k+1}^n,t_k^n}\|_{B_\alpha}\le C C_H |t_{k+1}^n-t_k^n|\left(1+\left|\log |t_{k+1}^n-t_k^n|\right|\right).
\]
The dyadic decomposition argument from Theorem~\ref{thm:subcritical-dyadic-sewing} then gives
\[
\norm{R^{\mathsf s}_{ts}}_{B_\alpha}\le C\bigl(\norm{H}_{1,\alpha}+C_H\bigr)|t-s|\bigl(1+|\log |t-s||\bigr)
\]
on dyadic pairs.  The logarithmic modulus still tends to zero as $|t-s|\downarrow0$.  Repeating the endpoint-continuity argument in the proof of Theorem~\ref{thm:subcritical-dyadic-sewing}, with the above logarithmic bound in place of \eqref{eq:dyadic-holder-bound-scale}, gives a unique continuous extension to all pairs and preserves the estimate.  The algebraic identity $\hdd R^{\mathsf s}=H$ holds on dyadic triples by Lemma~\ref{lem:local-compatibility} and then extends by continuity.  The operator estimate follows from $C_H\le C_{\mathsf s}\|H\|_{1,\alpha}$ on the fixed critical selection domain. Finally,
\[
r(1+|\log r|)\le C_{\varepsilon,T}r^{1-\varepsilon}, \quad 0<r\le T,
\]
gives \eqref{eq:critical-subholder-estimate}.
\end{proof}

\begin{remark}
The actual representative constructed above depends on the admissible splitting rule.  If two admissible rules produce representatives $R$ and $\widetilde R$ for the same defect $H$, then $\hdd(R-\widetilde R)=0$.  Thus the remaining ambiguity is by semigroup cocycles, and the corresponding fixed-level quotient class is independent of the chosen representative.
\end{remark}

\section{Semigroup verification and applications}
\label{sec:semigroup-verification-and-applications}
We now specialize the abstract theory to the heat semigroup.  We first verify the scale-splitting assumptions, and then derive normalized semigroup increments and pathwise stochastic consequences.  The verification uses smoothing and frequency localization rather than invertibility, relying on the standard Sobolev spectral description of the heat semigroup~\cite{GT10}, elementary Littlewood--Paley estimates~\cite{BCD11} and a parabolic tail condition on the defect residuals.

\subsection{Sobolev verification of the scale splitting}
\label{sec:heat-verification}

Let $\T^d$ be the $d$-dimensional torus, let
\[
A_0:=1-\Delta,\quad S_t:=e^{-tA_0},
\]
and set
\[
B_\alpha:=H^\alpha(\T^d), \quad \norm{x}_{B_\alpha}:=\norm{A_0^{\alpha/2}x}_{L^2}.
\]
Let $P_{\le N}$ denote the $L^2$-orthogonal spectral projection of $A_0$ onto frequencies with spectral parameter $\lambda\lesssim N^2$, and set $P_{>N}:=\mathrm{Id}-P_{\le N}$. These projections commute with $S_t$ and are contractions on every $H^\alpha$.  The following standard estimates follow from the spectral theorem and the usual Littlewood--Paley cutoff bounds.

\begin{lemma}
\label{lem:heat-cutoff-estimates}
Let $0<\sigma<1$, $\beta>0$, and define $N_h:=h^{-\sigma/2}$ for $0<h\le1$.  Then there are constants $C,c>0$ such that for all $x\in H^{\alpha+\beta}$,
\begin{equation}
\label{eq:high-tail-estimate}
\norm{P_{>N_h}x}_{H^\alpha}
\le
C h^{\sigma\beta/2}
\norm{x}_{H^{\alpha+\beta}},
\end{equation}
and for all $x\in H^\alpha$,
\begin{equation}
\label{eq:low-inverse-estimate}
\norm{S_h^{-1}P_{\le N_h}x}_{H^\alpha}
\le
\exp(c h^{1-\sigma})
\norm{x}_{H^\alpha}.
\end{equation}
Here $S_h^{-1}P_{\le N_h}$ is defined spectrally on the finite-dimensional range of $P_{\le N_h}$.
\end{lemma}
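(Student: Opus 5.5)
Both estimates are diagonal in the eigenbasis of $A_0$ on $\T^d$, so I will work on the Fourier side. With $\lambda_k:=1+|k|^2$ for $k\in\mathbb Z^d$, we have $\norm{x}_{H^\alpha}^2=\sum_k\lambda_k^{\alpha}|\hat x_k|^2$ and $S_t$ acts as multiplication by $e^{-t\lambda_k}$. I fix the convention for the cutoff by letting $P_{\le N}$ project onto the modes with $\lambda_k\le N^2$; any fixed comparable cutoff $\lambda_k\le C_0N^2$ only changes the constants. Both inequalities then reduce to pointwise bounds on the corresponding scalar multipliers, taken uniformly over the relevant modes. Orthogonality in $L^2$ and commutation with $A_0^{\alpha/2}$ turn these into operator bounds on every $H^\alpha$.

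For the high-frequency tail estimate \eqref{eq:high-tail-estimate}, I take $k$ with $\lambda_k>N_h^2=h^{-\sigma}$. On such modes $\lambda_k^{\alpha}=\lambda_k^{\alpha+\beta}\lambda_k^{-\beta}\le h^{\sigma\beta}\lambda_k^{\alpha+\beta}$, because $\beta>0$. Summing over these modes gives $\norm{P_{>N_h}x}_{H^\alpha}^2\le h^{\sigma\beta}\norm{x}_{H^{\alpha+\beta}}^2$, and taking square roots yields \eqref{eq:high-tail-estimate} with $C=1$, or with $C=C_0^{\beta/2}$ under a comparable cutoff convention. This is the Bernstein-type bound from the Littlewood--Paley literature \cite{BCD11}, specialised to sharp spectral projections.

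For the low-frequency inverse estimate \eqref{eq:low-inverse-estimate}, the range of $P_{\le N_h}$ is finite-dimensional: it is spanned by the modes with $\lambda_k\le h^{-\sigma}$, and there are finitely many of them. On that range $S_h$ acts as multiplication by $e^{-h\lambda_k}\neq0$, so $S_h^{-1}$ is the multiplier $e^{h\lambda_k}$. Since $h\lambda_k\le h\cdot h^{-\sigma}=h^{1-\sigma}$, each coefficient is multiplied by at most $\exp(h^{1-\sigma})$. Summing with the weights $\lambda_k^\alpha$ gives \eqref{eq:low-inverse-estimate} with $c=1$, or $c=C_0$ for a comparable cutoff. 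The constants are independent of $\alpha$, $d$ and $h\in(0,1]$.

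No step is genuinely hard. The only point needing care is making the frequency cutoff precise, because the statement writes $\lambda\lesssim N^2$. I would fix the sharp convention $\lambda\le N^2$ explicitly and note that replacing $N^2$ by $C_0N^2$ only changes $C$ and $c$. I would also note that $0<h\le1$ and $\sigma<1$ make $h^{1-\sigma}\le1$, so the factor $\exp(ch^{1-\sigma})$ stays bounded and tends to $1$ as $h\downarrow0$; this is what the later admissibility argument will use.
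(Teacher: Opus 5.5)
Your proposal is correct and uses the same spectral-multiplier argument as the paper. The tail bound comes from $\lambda^{-\beta/2}\le N_h^{-\beta}=h^{\sigma\beta/2}$ on the modes of $P_{>N_h}$, and the inverse bound from $e^{h\lambda}\le e^{ch^{1-\sigma}}$ on the modes of $P_{\le N_h}$; your version just makes the Fourier coefficients and the constants ($C=c=1$ for the sharp cutoff) explicit. One harmless slip: under a comparable cutoff $\lambda\le C_0N^2$ the tail constant is $C_0^{-\beta/2}$, not $C_0^{\beta/2}$. Your value remains a valid non-sharp bound when $C_0\ge1$, so the conclusion is unaffected.
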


\begin{proof}
For \eqref{eq:high-tail-estimate}, the spectral theorem gives
\[
\norm{P_{>N_h}x}_{H^\alpha}
\le C N_h^{-\beta}\norm{x}_{H^{\alpha+
\beta}}
=C h^{\sigma\beta/2}\norm{x}_{H^{\alpha+\beta}}.
\]
For \eqref{eq:low-inverse-estimate}, on the support of $P_{\le N_h}$ one has spectral parameter $\lambda\lesssim N_h^2=h^{-\sigma}$.  Therefore
\[
\norm{S_h^{-1}P_{\le N_h}}_{\mathcal L(H^\alpha)}
\le
\sup_{\lambda\lesssim h^{-\sigma}}e^{h\lambda}
\le
\exp(c h^{1-\sigma}).
\]
This proves the lemma.
\end{proof}

The next definition is defect-dependent.  It controls only the high-frequency tails of the residuals generated by $H$ at the relevant parabolic scales, rather than imposing a splitting condition on all of $H^\alpha$.

\begin{definition}
\label{def:parabolic-tail-condition}
Let $H\in Z\hC^{\gamma}(\Delta_T^3;B_\alpha)$ and let $\mathsf s$ be the heat spectral splitting rule described below.  We say that $H$ satisfies the {\bf parabolic tail condition} with parameters $(\beta,\sigma,K)$ if, for every dyadic parent interval $[a,c]$ with midpoint $b$, length $L=c-a$, $h=c-b=L/2$, and residual $Q_{ca}$ generated by the recursion, the vector $Y_{cba}:=Q_{ca}-H_{cba}$ satisfies
\begin{equation}
\label{eq:parabolic-tail-bound}
\norm{P_{>N_h}Y_{cba}}_{H^\alpha}\le K L^{\gamma+\rho},
\end{equation}
where $\rho:=\frac{\sigma\beta}{2}>0$. At the critical endpoint $\gamma=1$, the same condition is required with $L^{1+\rho}$ in \eqref{eq:parabolic-tail-bound}.
\end{definition}

\begin{remark}
The tail condition is automatic if the residuals have a little extra Sobolev
regularity.  Indeed, if for some $\beta>0$, $\|Y_{cba}\|_{H^{\alpha+\beta}}\le K L^\gamma$, then Lemma~\ref{lem:heat-cutoff-estimates} gives
\[
\|P_{>N_h}Y_{cba}\|_{H^\alpha}\le C h^{\sigma\beta/2}K L^\gamma \le C K L^{\gamma+\sigma\beta/2}.
\]
Thus the parabolic tail condition holds with $\rho=\sigma\beta/2$.
\end{remark}

We now use the parabolic cutoff to define the splitting.  For a parent residual $Q_{ca}$ and $0<h\le1$, set
\begin{equation}
\label{eq:heat-splitting-left}
Q_{ba}:=\frac12 S_h^{-1}P_{\le N_h}Y_{cba},\quad Q_{cb}:=\frac12 P_{\le N_h}Y_{cba}+P_{>N_h}Y_{cba}.
\end{equation}
Then $Q_{cb}+S_hQ_{ba}=Y_{cba}$, and hence the splitting identity holds.  In the sequel we state the estimate for $T\le1$; general finite horizons follow by
subdivision.

\begin{lemma}
\label{lem:heat-splitting-identity}
The splitting \eqref{eq:heat-splitting-left} satisfies
\begin{equation*}
Q_{ca}=Q_{cb}+S_hQ_{ba}+H_{cba}.
\end{equation*}
\end{lemma}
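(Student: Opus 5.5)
The identity is purely algebraic, so I will verify it by direct substitution into \eqref{eq:heat-splitting-left}, using only the definition $Y_{cba}:=Q_{ca}-H_{cba}$ from Definition~\ref{def:parabolic-tail-condition}. The approach has two steps.

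First, I check that $S_hQ_{ba}=\tfrac12 P_{\le N_h}Y_{cba}$. On the finite-dimensional range of $P_{\le N_h}$, the operator $S_h^{-1}P_{\le N_h}$ is defined spectrally by multiplication by $e^{h\lambda}$ on each eigenspace of $A_0$ with $\lambda\lesssim N_h^2$. Composing with $S_h$ multiplies by $e^{-h\lambda}$, so $S_hS_h^{-1}P_{\le N_h}=P_{\le N_h}$. This uses only that $P_{\le N_h}$ commutes with $S_h$ and that $S_h$ is injective on this range; both follow from the spectral theorem, as recorded before Lemma~\ref{lem:heat-cutoff-estimates}.

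Second, I add the two child residuals:
\[
Q_{cb}+S_hQ_{ba}=\tfrac12P_{\le N_h}Y_{cba}+P_{>N_h}Y_{cba}+\tfrac12P_{\le N_h}Y_{cba}=Y_{cba},
\]
because $P_{\le N_h}+P_{>N_h}=\mathrm{Id}$. Substituting $Y_{cba}=Q_{ca}-H_{cba}$ and rearranging gives $Q_{ca}=Q_{cb}+S_hQ_{ba}+H_{cba}$. This is \eqref{eq:scale-splitting-identity} with $S_{c-b}=S_h$, since $h=c-b$.

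The main point to be careful about is the well-definedness of the quantities involved. $Y_{cba}$ lies in $H^\alpha$ because both $Q_{ca}$ and $H_{cba}$ do. $S_h^{-1}P_{\le N_h}Y_{cba}$ is a finite spectral sum and therefore lies in $H^\alpha$, in line with \eqref{eq:low-inverse-estimate}. Finally, $h=L/2\le1$ because $T\le1$, so $N_h$ is defined. No analytic estimate is needed for the identity itself.
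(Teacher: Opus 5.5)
Your proof is correct and follows the paper's argument: write $Q_{ca}=Y_{cba}+H_{cba}$, split $Y_{cba}=\frac12P_{\le N_h}Y_{cba}+P_{>N_h}Y_{cba}+\frac12P_{\le N_h}Y_{cba}$, and recognize the last piece as $S_hQ_{ba}$. Your explicit check that $S_hS_h^{-1}P_{\le N_h}=P_{\le N_h}$ on the spectral range just spells out a step the paper leaves implicit.
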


\begin{proof}
By construction,
\[
Q_{ca}=Y_{cba}+H_{cba}=\frac12P_{\le N_h}Y_{cba}+P_{>N_h}Y_{cba}+\frac12P_{\le N_h}Y_{cba}+H_{cba}=Q_{cb}+S_hQ_{ba}+H_{cba}.\qedhere
\]
\end{proof}

The required estimate for the heat spectral splitting is as follows.

\begin{proposition}
\label{prop:heat-spectral-splitting-estimate}
Let $0<T\le1$, $0<\sigma<1$, and suppose that $H$ satisfies the parabolic tail condition \eqref{eq:parabolic-tail-bound}.  Then the heat spectral splitting satisfies
\begin{equation}
\label{eq:heat-critical-estimate}
\max\{\norm{Q_{cb}}_{H^\alpha},\norm{Q_{ba}}_{H^\alpha}\}\le \left(\frac12+C L^\eta\right)\norm{Q_{ca}}_{H^\alpha} +C\bigl(\norm{H}_{\gamma,\alpha}+K\bigr)L^\gamma,
\end{equation}
where $\eta:=\min\{1-\sigma,\rho\}>0$ and $\rho=\frac{\sigma\beta}{2}$.
In particular, at $\gamma=1$ this is a critical admissible scale splitting.  If $0<\gamma<1$, then after restricting to a sufficiently small time horizon $T_0$, it is a subcritical admissible scale splitting, because $1/2+C T_0^\eta<2^{-\gamma}$.
\end{proposition}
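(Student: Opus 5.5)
The plan is to estimate the two child residuals separately. Each one is split into a low-frequency part and a high-frequency part using the orthogonal projections $P_{\le N_h}$ and $P_{>N_h}$. The single intermediate bound I will rely on throughout is
\[
\norm{Y_{cba}}_{H^\alpha}\le \norm{Q_{ca}}_{H^\alpha}+\norm{H_{cba}}_{H^\alpha}.
\]
I will control $\norm{H_{cba}}_{H^\alpha}$ by $C\norm{H}_{\gamma,\alpha}L^\gamma$ as follows. At the midpoint we have $|c-b|=|b-a|=L/2$. For any admissible decomposition $H=\sum_j H_j$ this gives $\norm{H_{j,cba}}\le \norm{H_j}_{\rho_j,\gamma-\rho_j;\alpha}(L/2)^\gamma$. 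Taking the infimum over decompositions yields $\norm{H_{cba}}_{H^\alpha}\le 2^{-\gamma}\norm{H}_{\gamma,\alpha}L^\gamma$.

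For $Q_{ba}=\tfrac12 S_h^{-1}P_{\le N_h}Y_{cba}$, I apply \eqref{eq:low-inverse-estimate} from Lemma~\ref{lem:heat-cutoff-estimates} together with the contractivity of $P_{\le N_h}$. This gives $\norm{Q_{ba}}\le \tfrac12 e^{ch^{1-\sigma}}\norm{Y_{cba}}$. Since $h\le L\le T\le 1$, we have $e^{ch^{1-\sigma}}\le 1+C L^{1-\sigma}$. Hence
\[
\norm{Q_{ba}}\le \bigl(\tfrac12+CL^{1-\sigma}\bigr)\bigl(\norm{Q_{ca}}+C\norm{H}_{\gamma,\alpha}L^\gamma\bigr),
\]
and this is of the required form because $L^{1-\sigma}\le L^\eta$ and $L^{1-\sigma}L^\gamma\le L^\gamma$.

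For $Q_{cb}=\tfrac12 P_{\le N_h}Y_{cba}+P_{>N_h}Y_{cba}$, the first term is bounded by $\tfrac12\norm{Y_{cba}}$ by contractivity. The tail term is bounded by $KL^{\gamma+\rho}\le KL^\gamma$ using the parabolic tail condition \eqref{eq:parabolic-tail-bound}. Adding these gives $\norm{Q_{cb}}\le \tfrac12\norm{Q_{ca}}+C(\norm{H}_{\gamma,\alpha}+K)L^\gamma$. (At $\gamma=1$ the condition is stated with $L^{1+\rho}$, which also suffices.) Taking the maximum of the two bounds gives \eqref{eq:heat-critical-estimate} with $\eta=\min\{1-\sigma,\rho\}$. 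Only the factor $L^{1-\sigma}$ is actually needed in the multiplicative term, and replacing it by the smaller $L^\eta$ is harmless.

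The main obstacle is the multiplicative coefficient, which must stay at $\tfrac12$ up to a summable scale error. This is exactly where the choice $N_h=h^{-\sigma/2}$ with $\sigma<1$ matters. It keeps the low-frequency inverse $S_h^{-1}$ within $1+O(h^{1-\sigma})$ of a contraction, while the high-frequency part is put entirely into $Q_{cb}$ and paid for additively by the tail condition instead of by the coefficient.

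For the final claims: at $\gamma=1$ the estimate is literally \eqref{eq:critical-scale-splitting-estimate} with $C_\ast=C$ and $C_H=C(\norm{H}_{1,\alpha}+K)$. For $0<\gamma<1$ we have $2^{-\gamma}>\tfrac12$, so we can choose $T_0$ with $\theta:=\tfrac12+CT_0^\eta<2^{-\gamma}$. Since every dyadic length satisfies $L\le T_0$, the bound \eqref{eq:subcritical-scale-splitting-estimate} holds with this $\theta$.
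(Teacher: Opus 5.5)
Your proposal is correct and follows the same route as the paper. You bound $Q_{ba}$ through the low-frequency inverse estimate \eqref{eq:low-inverse-estimate}, which gives the factor $\tfrac12+CL^{1-\sigma}$, and $Q_{cb}$ by $\tfrac12\norm{Y_{cba}}_{H^\alpha}$ plus the tail term $KL^{\gamma+\rho}\le KL^\gamma$, while $\norm{H_{cba}}_{H^\alpha}\le 2^{-\gamma}\norm{H}_{\gamma,\alpha}L^\gamma$ is absorbed additively; your explicit constant $2^{-\gamma}$ just makes the paper's $C$ concrete. One small wording slip: since $\eta\le 1-\sigma$ and $L\le 1$, $L^\eta$ is the larger quantity (it has the smaller exponent), which is exactly why the replacement is harmless, as you correctly noted earlier.
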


\begin{proof}
We estimate $\norm{Q_{ba}}_{H^\alpha}$ and $\norm{Q_{cb}}_{H^\alpha}$ separately. For $\norm{Q_{ba}}_{H^\alpha}$, Lemma~\ref{lem:heat-cutoff-estimates} gives
\[
\norm{Q_{ba}}_{H^\alpha}\le\frac12 e^{c h^{1-\sigma}}\norm{P_{\le N_h}Y_{cba}}_{H^\alpha}\le\frac12(1+C L^{1-\sigma})\norm{Y_{cba}}_{H^\alpha}.
\]
Since $Y_{cba}=Q_{ca}-H_{cba}$ and $\norm{H_{cba}}_{H^\alpha}\le C\norm{H}_{\gamma,\alpha}L^\gamma$, we get
\begin{equation}
\label{eq:left-heat-estimate}
\norm{Q_{ba}}_{H^\alpha}\le\left(\frac12+C L^{1-\sigma}\right)\norm{Q_{ca}}_{H^\alpha}+C\norm{H}_{\gamma,\alpha}L^\gamma.
\end{equation}
For $\norm{Q_{cb}}_{H^\alpha}$, using \eqref{eq:heat-splitting-left},
\[
\norm{Q_{cb}}_{H^\alpha}
\le
\frac12
\norm{P_{\le N_h}Y_{cba}}_{H^\alpha}
+
\norm{P_{>N_h}Y_{cba}}_{H^\alpha}.
\]
The first term is bounded by
\[
\frac12\norm{Y_{cba}}_{H^\alpha}
\le
\frac12\norm{Q_{ca}}_{H^\alpha}
+C\norm{H}_{\gamma,\alpha}L^\gamma.
\]
The second term is bounded by 
\[
\norm{P_{>N_h}Y_{cba}}_{H^\alpha}\le K L^{\gamma+\rho}\le K T^\rho L^\gamma, \quad L\le T\le1.
\]
Combining these estimates gives
\begin{equation}
\label{eq:right-heat-estimate}
\norm{Q_{cb}}_{H^\alpha}
\le
\frac12
\norm{Q_{ca}}_{H^\alpha}
+C(\norm{H}_{\gamma,
\alpha}+K)L^\gamma.
\end{equation}
Together \eqref{eq:left-heat-estimate} and \eqref{eq:right-heat-estimate} prove \eqref{eq:heat-critical-estimate}, with $\eta=\min\{1-\sigma,\rho\}$.  The last statement follows because $2^{-\gamma}>1/2$ for $0<\gamma<1$.
\end{proof}

Combining the algebraic splitting with the heat semigroup estimates gives the following representative result.

\begin{theorem}
\label{thm:heat-sewing-sobolev}
Let $S_t=e^{-t(1-\Delta)}$ on $H^\alpha(\T^d)$, and let $H\in Z\hC^{\gamma}(\Delta_T^3;B_\alpha)$ satisfy the parabolic residual tail condition of
Definition~\ref{def:parabolic-tail-condition}.  Assume also that the residual continuity criterion \eqref{eq:residual-continuity-criterion} holds for the heat spectral rule.  Then the heat spectral splitting produces a semigroup Sewing representative
\[
\hat\Lambda^{\mathsf{heat}}_{S,\gamma,\alpha}H:=R^{\mathsf s,\mathrm{heat}}(H).
\]
If $0<\gamma<1$, then on sufficiently short time intervals one has
\[
\hat\Lambda^{\mathsf{heat}}_{S,\gamma,\alpha}H\in C^{\gamma}(\Delta_T^2;B_\alpha), \quad\hdd\hat\Lambda^{\mathsf{heat}}_{S,\gamma,\alpha}H=H,
\]
and 
\[
\norm{\hat\Lambda^{\mathsf{heat}}_{S,\gamma,\alpha}H}_{\gamma,\alpha}\le C_T\bigl(\norm{H}_{\gamma,\alpha}+K\bigr).
\]
At the endpoint $\gamma=1$,
\[
\hat\Lambda^{\mathsf{heat}}_{S,1,\alpha}H\in C_{\log}^{1}(\Delta_T^2;B_\alpha), \quad\hdd\hat\Lambda^{\mathsf{heat}}_{S,1,\alpha}H=H,
\]
and
\[
\norm{\hat\Lambda^{\mathsf{heat}}_{S,1,\alpha}H}_{1,\log;\alpha}\le C_T\bigl(\norm{H}_{1,\alpha}+K\bigr).
\]
On any linear class where $K\le C\norm{H}_{\gamma,\alpha}$ and the residual continuity criterion holds uniformly, this construction defines a continuous linear Sewing map.
\end{theorem}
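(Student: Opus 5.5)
The plan is to reduce Theorem~\ref{thm:heat-sewing-sobolev} to the abstract selection results, Theorems~\ref{thm:subcritical-dyadic-sewing} and~\ref{thm:critical-dyadic-sewing}. To do so I will check that the heat spectral splitting \eqref{eq:heat-splitting-left} is an admissible scale splitting in the sense of Definition~\ref{def:defect-dependent-splitting}, with constant $C_H\lesssim \norm{H}_{\gamma,\alpha}+K$.

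First I run the dyadic recursion of Section~\ref{sec:direct-dyadic} with $B^H_{ts}=-H_{ts0}$, $I^{\mathsf s}_0=0$ and $I^{\mathsf s}_T=B^H_{T0}$. At each parent interval $[a,c]$, the child residuals are given by \eqref{eq:heat-splitting-left} with $Y_{cba}=Q_{ca}-H_{cba}$ and $h=L/2$, and the midpoint value by \eqref{eq:midpoint-path-scale}. The recursion is well defined because $S_h^{-1}P_{\le N_h}$ acts on a finite-dimensional spectral range. By Lemma~\ref{lem:heat-splitting-identity}, the identity \eqref{eq:scale-splitting-identity} holds at every step. Lemma~\ref{lem:local-compatibility} then gives $\hdd R^{\mathsf s}=H$ on dyadic triples.

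Second, Proposition~\ref{prop:heat-spectral-splitting-estimate} supplies the quantitative estimate \eqref{eq:heat-critical-estimate}. I split into two cases.
\begin{itemize}
\item[(i)] For $\gamma=1$, \eqref{eq:heat-critical-estimate} is exactly \eqref{eq:critical-scale-splitting-estimate}, with $C_\ast=C$, $\eta=\min\{1-\sigma,\rho\}$ and $C_H=C(\norm{H}_{1,\alpha}+K)$. Theorem~\ref{thm:critical-dyadic-sewing}, used in its individual-defect form, then yields the logarithmic bound.
\item[(ii)] For $0<\gamma<1$, I choose $T_0\le1$ so small that $\theta:=\tfrac12+CT_0^\eta<2^{-\gamma}$, which is possible since $2^{-\gamma}>\tfrac12$. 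Since every dyadic length satisfies $L\le T_0$, \eqref{eq:heat-critical-estimate} becomes \eqref{eq:subcritical-scale-splitting-estimate}. Theorem~\ref{thm:subcritical-dyadic-sewing}, in its individual-defect form, then gives the $C^\gamma$ bound with constant $C_T(\norm{H}_{\gamma,\alpha}+K)$.
\end{itemize}
In both cases the residual continuity criterion \eqref{eq:residual-continuity-criterion} is assumed in the statement. The continuous extension from dyadic pairs to all of $\Delta_T^2$, together with the identity $\hdd R=H$ on all triples, is therefore inherited from those theorems. Lemma~\ref{lem:heat-cutoff-estimates} was stated for $h\le1$; this is consistent with the standing reduction to $T\le1$.

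Finally, I prove linearity on a class where $K\le C\norm{H}_{\gamma,\alpha}$. The maps $H\mapsto B^H$, $Y\mapsto P_{\le N_h}Y$, $Y\mapsto P_{>N_h}Y$ and $Y\mapsto S_h^{-1}P_{\le N_h}Y$ are all linear, so by induction on the level every residual is linear in $H$. The bound $C_H\le C\norm{H}_{\gamma,\alpha}$ then turns the estimates into operator bounds.

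The main obstacle is twofold. One part is the subcritical short-horizon step: the contraction $\theta<2^{-\gamma}$ holds only for $T_0$ small, so the statement has to be read on short intervals. Extending to $[0,T]$ would require patching via the semigroup identity $R_{ts}=R_{tu}+S_{t-u}R_{us}+H_{tus}$, and I would not attempt that beyond what the statement claims. The other part is subtler. The tail condition is imposed on residuals produced by the recursion itself, so $K$ is not a priori controlled by $H$. Linearity must therefore be asserted only on classes where the tail bound holds uniformly and linearly, which is exactly the final clause of the theorem.
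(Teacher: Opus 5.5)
Your proposal is correct and follows the paper's proof. Lemma~\ref{lem:heat-splitting-identity} and Proposition~\ref{prop:heat-spectral-splitting-estimate} verify admissibility with $C_H\lesssim\norm{H}_{\gamma,\alpha}+K$, Theorem~\ref{thm:critical-dyadic-sewing} handles $\gamma=1$, Theorem~\ref{thm:subcritical-dyadic-sewing} handles $0<\gamma<1$, and linearity comes from the fixed linear rule together with $K\le C\norm{H}_{\gamma,\alpha}$.

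The one difference is in the subcritical case. The paper does not shrink the horizon; it keeps the dyadic grid on $[0,T]$ and notes that $\tfrac12+CL^\eta<2^{-\gamma}$ fails only on finitely many coarse levels, whose contribution to $M_{n+1}\le\theta_n M_n+C_H L^\gamma$ is absorbed into the constant. This avoids the patching you flag. It also keeps the tail and continuity hypotheses, which refer to the residuals of that specific recursion, applicable as stated. The cost is rerunning, rather than directly citing, the proof of Theorem~\ref{thm:subcritical-dyadic-sewing}.
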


\begin{proof}
Lemma~\ref{lem:heat-splitting-identity} gives the exact splitting identity, and Proposition~\ref{prop:heat-spectral-splitting-estimate} gives the admissible scale estimate.  Hence Theorem~\ref{thm:critical-dyadic-sewing} applies directly when $\gamma=1$.
For $0<\gamma<1$, the coefficient
$1/2+CL^\eta$ in
Proposition~\ref{prop:heat-spectral-splitting-estimate} is smaller than $2^{-\gamma}$ on sufficiently small dyadic scales.  Thus Theorem~\ref{thm:subcritical-dyadic-sewing} applies after finitely many initial levels, which only affect the constant.  The estimate and the identity
$\hdd\hat\Lambda^{\mathsf{heat}}_{S,\gamma,\alpha}H=H$ then follow by the usual dyadic decomposition and continuity argument.  Linearity and continuity follow
from the fixed linear splitting rule and the uniform bound $K\le C\norm{H}_{\gamma,\alpha}$.
\end{proof}

\begin{remark}
Theorem~\ref{thm:heat-sewing-sobolev} should be read as an actual representative result on a heat-admissible class of defects, not as a Sewing map on the whole space $Z\hC^{\gamma}(\Delta_T^3;B_\alpha)$.  For an individual defect the estimate depends on the tail constant $K$.  A continuous linear Sewing map is obtained only on linear classes where this tail constant is controlled by $\norm{H}_{\gamma,\alpha}$.
\end{remark}

\subsection{Normalized semigroup increments}
\label{sec:convolution-primitives}
We now explain how a scale-dependent semigroup Sewing representative produces a normalized semigroup increment. This is the form needed for mild evolution
equations.

Let $0<\gamma\le1$ and let $A\in C(\Delta_T^2;B_\alpha)$ be such that $\hdd A\in Z\hC^{\gamma}(\Delta_T^3;B_\alpha)$. Assume moreover that $\hdd A$ belongs to an admissible scale-dependent selection domain, and fix the corresponding admissible splitting rule $\mathsf s$.  Define
\begin{equation}
\label{eq:sewn-exact-part}
\mathcal J^{\mathsf s}_{\gamma,\alpha}(A)
:=
A-
\hat\Lambda^{\mathsf s}_{S,\gamma,\alpha}(\hdd A),
\end{equation}
where for $\gamma=1$ the endpoint map from Theorem~\ref{thm:critical-dyadic-sewing} is used.

\begin{proposition}
\label{prop:sewn-exact-cocycle}
The increment $\mathcal J^{\mathsf s}_{\gamma,\alpha}(A)$ defined above satisfies $\hdd\mathcal J^{\mathsf s}_{\gamma,\alpha}(A)=0$.
\end{proposition}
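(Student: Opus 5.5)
The claim follows from linearity of $\hdd$ together with the right-inverse property of the selected Sewing map. Set $H:=\hdd A$. By assumption, $H\in Z\hC^{\gamma}(\Delta_T^3;B_\alpha)$ and $H$ lies in the admissible selection domain attached to the fixed rule $\mathsf s$, so $\hat\Lambda^{\mathsf s}_{S,\gamma,\alpha}H$ is well defined. Note that the cocycle property $\hdd H=0$ is automatic from Lemma~\ref{prop:hdelta-square-zero}, since $\hdd H=\hdd^2A=0$. Hence the only genuine requirement in the hypothesis is membership in the selection domain.

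The key step is to invoke the right-inverse identity. For $0<\gamma<1$ this is \eqref{eq:subcritical-right-inverse} of Theorem~\ref{thm:subcritical-dyadic-sewing}, and for $\gamma=1$ it is \eqref{eq:critical-right-inverse} of Theorem~\ref{thm:critical-dyadic-sewing}. Either gives
\[
\hdd\hat\Lambda^{\mathsf s}_{S,\gamma,\alpha}(\hdd A)=\hdd A
\]
on all of $\Delta_T^3$. Since $\hdd=\delta-a$ is linear on $C(\Delta_T^2;B_\alpha)$, we then compute
\[
\hdd\mathcal J^{\mathsf s}_{\gamma,\alpha}(A)=\hdd A-\hdd\hat\Lambda^{\mathsf s}_{S,\gamma,\alpha}(\hdd A)=\hdd A-\hdd A=0 .
\]

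The only point needing care is that the right-inverse identity holds at every triple in $\Delta_T^3$, not just on dyadic triples. This is already covered in the proofs of the two theorems. There, Lemma~\ref{lem:local-compatibility} gives the identity on dyadic triples, and the continuous extension passes it to all triples, using continuity of both sides, strong continuity of $S$, and the residual continuity criterion \eqref{eq:residual-continuity-criterion}. We also need $\mathcal J^{\mathsf s}_{\gamma,\alpha}(A)$ to lie in $C(\Delta_T^2;B_\alpha)$ so that $\hdd$ applies. This holds because $A$ is continuous and the Sewing map takes values in $C^{\gamma}$ or $C^1_{\log}$, which are continuous increment spaces. Beyond these points there is no real obstacle; the statement is essentially the algebraic identity $\hdd\circ(\mathrm{Id}-\hat\Lambda\hdd)=0$.
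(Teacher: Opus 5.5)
Your proposal is correct and follows the paper's own proof: it uses linearity of $\hat\delta$ together with the right-inverse identity $\hat\delta\hat\Lambda^{\mathsf s}_{S,\gamma,\alpha}(\hat\delta A)=\hat\delta A$ from Theorems~\ref{thm:subcritical-dyadic-sewing} and~\ref{thm:critical-dyadic-sewing}. Your added remarks, that the identity holds on all of $\Delta_T^3$ and that $\mathcal J^{\mathsf s}_{\gamma,\alpha}(A)$ is continuous, are accurate but already covered by those theorems.
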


\begin{proof}
By \eqref{eq:sewn-exact-part} and the right-inverse property,
\[
\hdd\mathcal J^{\mathsf s}_{\gamma,\alpha}(A)=\hdd A-\hdd\hat\Lambda^{\mathsf s}_{S,\gamma,\alpha}(\hdd A)=0.\qedhere
\]
\end{proof}

The following result identifies the corrected cocycle as the semigroup increment of a uniquely determined normalized semigroup increment.

\begin{theorem}
\label{thm:normalized-convolution-primitive-revised}
Let $0<\gamma\le1$, let $A\in C(\Delta_T^2;B_\alpha)$, and assume that $\hdd A\in Z\hC^{\gamma}(\Delta_T^3;B_\alpha)$ belongs to the admissible selection domain of the chosen rule $\mathsf s$.  Define a path $I^{\mathsf s}_{\gamma,\alpha}(A)_t:=\mathcal J^{\mathsf s}_{\gamma,\alpha}(A)_{t0}$. Then
\begin{equation}
\label{eq:convolution-primitive-identity}
\hdd I^{\mathsf s}_{\gamma,\alpha}(A)
=
\mathcal J^{\mathsf s}_{\gamma,\alpha}(A).
\end{equation}
Equivalently,
\begin{equation*}
(\hdd I^{\mathsf s}_{\gamma,\alpha}(A))_{ts}=A_{ts}-\bigl(\hat\Lambda^{\mathsf s}_{S,\gamma,\alpha}(\hdd A)\bigr)_{ts}.
\end{equation*}
The path is the unique path satisfying \eqref{eq:convolution-primitive-identity} and $I^{\mathsf s}_{\gamma,\alpha}(A)_0=0$.
\end{theorem}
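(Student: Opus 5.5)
The plan is to reduce everything to the cocycle property of $\mathcal J:=\mathcal J^{\mathsf s}_{\gamma,\alpha}(A)$ recorded in Proposition~\ref{prop:sewn-exact-cocycle}. Together with the explicit formula \eqref{eq:delta_hat_2} for $\hat\delta$ on one-increments, this gives both existence and uniqueness. No analytic estimate should be needed: the statement is purely algebraic once $\hat\delta\mathcal J=0$ is known. Continuity of $I:=I^{\mathsf s}_{\gamma,\alpha}(A)$ is inherited from $\mathcal J\in C(\Delta_T^2;B_\alpha)$, since $A$ is continuous and $\hat\Lambda^{\mathsf s}_{S,\gamma,\alpha}(\hat\delta A)$ is continuous by Theorems~\ref{thm:subcritical-dyadic-sewing} and~\ref{thm:critical-dyadic-sewing}.

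\emph{Existence.} Fix $0\le s\le t\le T$ and evaluate the cocycle identity $(\hat\delta\mathcal J)_{ts0}=0$ using \eqref{eq:delta_hat_2}. This gives
\[
\mathcal J_{t0}=\mathcal J_{ts}+S_{t-s}\mathcal J_{s0}.
\]
With $I_t=\mathcal J_{t0}$, this reads $(\hat\delta I)_{ts}=I_t-S_{t-s}I_s=\mathcal J_{ts}$, which is \eqref{eq:convolution-primitive-identity}. Inserting the definition \eqref{eq:sewn-exact-part} yields the equivalent displayed form. We also have $I_0=\mathcal J_{00}$. Applying the cocycle identity at $t=u=s=0$ gives $\mathcal J_{00}=\mathcal J_{00}+S_0\mathcal J_{00}$, so $\mathcal J_{00}=0$ and hence $I_0=0$. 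This is essentially the $n=1$ case of the argument in Lemma~\ref{prop:exactness}, with sign $(-1)^{n+1}=+1$.

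\emph{Uniqueness.} Suppose $I'$ also satisfies $\hat\delta I'=\mathcal J$ and $I'_0=0$. Then the difference $D:=I-I'$ satisfies $D_t-S_{t-s}D_s=0$ for all $s\le t$. Taking $s=0$ gives $D_t=S_tD_0=0$ for every $t$, so $I'=I$.

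\emph{Main obstacle.} The only delicate point is making sure $\hat\delta\mathcal J=0$ holds on all of $\Delta_T^3$, including degenerate triples such as $(t,s,0)$ and $(0,0,0)$, and not merely on dyadic triples. This follows from the right-inverse identities \eqref{eq:subcritical-right-inverse} and \eqref{eq:critical-right-inverse}. Those were extended to all triples by continuity in the proofs of Theorems~\ref{thm:subcritical-dyadic-sewing} and~\ref{thm:critical-dyadic-sewing}, and I would invoke them explicitly at this step.
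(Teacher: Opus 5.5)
Your proposal is correct and follows the paper's proof essentially step for step: you evaluate $\hat\delta\mathcal J=0$ at $(t,s,0)$ to obtain $\hat\delta I=\mathcal J$, and for uniqueness you evaluate $\hat\delta(I-I')=0$ at $(t,0)$. Your explicit check that $I_0=\mathcal J_{00}=0$, via the cocycle identity at $(0,0,0)$, supplies a fact that the paper's uniqueness step uses without stating it.
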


\begin{proof}
Set $\mathcal J:=\mathcal J^{\mathsf s}_{\gamma,\alpha}(A)$ and $I^{\mathsf s}:=I^{\mathsf s}_{\gamma,\alpha}(A)$.  By Proposition~\ref{prop:sewn-exact-cocycle}, $\hdd\mathcal J=0$.  Evaluating $\hdd\mathcal J=0$ at $(t,s,0)$ gives
\[
\mathcal J_{t0}-\mathcal J_{ts}-S_{t-s}\mathcal J_{s0}=0.
\]
Using $I_t^{\mathsf s}=\mathcal J_{t0}$, we get
\[
(\hdd I^{\mathsf s})_{ts}=I^{\mathsf s}_t-S_{t-s}I^{\mathsf s}_s=\mathcal J_{ts}.
\]
This proves \eqref{eq:convolution-primitive-identity}.  The expanded identity is just the definition of $\mathcal J$.  If $J$ is another path with $J_0=0$ and $\hdd J=\mathcal J$, then $\hdd(J-I^{\mathsf s})=0$.  Evaluating at $(t,0)$ gives
\[
J_t-I^{\mathsf s}_t=S_t(J_0-I^{\mathsf s}_0)=0.
\]
Thus the semigroup increment is unique under the initial condition.
\end{proof}

\subsection{Stochastic consequences}
\label{sec:stochastic-consequences}
This subsection records pathwise stochastic consequences. Stochastic arguments provide the required defect and tail estimates, while the Sewing step itself is deterministic.  Let $0<\gamma\le1$.  A random semigroup defect of regularity $\gamma$ is a measurable map
\[
H:\Omega\to Z\hC^{\gamma}(\Delta_T^3;B_\alpha).
\]
We write $H\in L^p\big(\Omega;Z\hC^{\gamma}(\Delta_T^3;B_\alpha)\big)$ if $\mathbb E\bigl[\norm{H}_{\gamma,\alpha}^p\bigr]<\infty$.

Once the random defect satisfies the deterministic admissibility assumptions almost surely, the Sewing construction applies pathwise and the deterministic estimate yields the corresponding $L^p(\Omega)$ bound.

\begin{theorem}
\label{thm:stochastic-sewing-estimates}
Fix an admissible scale-dependent selection rule $\mathsf s$ on a domain $\D^{\mathsf s}_{\gamma}(\Delta_T^3;B_\alpha)$.  Let $p\in[1,\infty)$.
If $0<\gamma<1$ and $H\in L^p\big(\Omega; \D^{\mathsf s}_{\gamma}(\Delta_T^3;B_\alpha)\big)$, then $R^{\mathsf s}:=\hat\Lambda^{\mathsf s}_{S,\gamma,\alpha}H$ satisfies
\begin{equation}
\label{eq:stochastic-subcritical-estimate}
\mathbb E\left[\norm{R^{\mathsf s}}_{\gamma,\alpha}^p\right]\le C^p \mathbb E\left[\norm{H}_{\gamma,\alpha}^p\right]
\end{equation}
and $\hdd R^{\mathsf s}=H$ almost surely.  For a family of individual random defects not lying in one fixed linear domain, the same estimate holds with the right-hand side replaced by
\[
C^p\mathbb E\left[\bigl(\norm{H}_{\gamma,\alpha}+C_H(\omega)\bigr)^p\right].
\]
If $H\in L^p\big(\Omega;\D^{\mathsf s}_{1}(\Delta_T^3;B_\alpha)\big)$, then $R^{\mathsf s}:=\hat\Lambda^{\mathsf s}_{S,1,\alpha}H$ satisfies
\begin{equation}
\label{eq:stochastic-critical-estimate}
\mathbb E\left[\norm{R^{\mathsf s}}_{1,\log;\alpha}^p\right]\le C^p\mathbb E\left[\norm{H}_{1,\alpha}^p\right]
\end{equation}
and $\hdd R^{\mathsf s}=H$ almost surely.  Again, for individual defects one may replace $\norm{H}_{1,\alpha}$ by $\norm{H}_{1,\alpha}+C_H(\omega)$.
\end{theorem}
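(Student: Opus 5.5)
The plan is to apply the deterministic results of Theorems~\ref{thm:subcritical-dyadic-sewing} and~\ref{thm:critical-dyadic-sewing} pathwise, and then integrate. Fix $\omega\in\Omega$ outside a null set. Because $H\in L^p(\Omega;\D^{\mathsf s}_{\gamma}(\Delta_T^3;B_\alpha))$, we have $H(\omega)\in\D^{\mathsf s}_{\gamma}(\Delta_T^3;B_\alpha)$ for almost every $\omega$. For such $\omega$, Theorem~\ref{thm:subcritical-dyadic-sewing} gives $R^{\mathsf s}(\omega)=\hat\Lambda^{\mathsf s}_{S,\gamma,\alpha}H(\omega)\in C^\gamma(\Delta_T^2;B_\alpha)$. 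It also gives $\hdd R^{\mathsf s}(\omega)=H(\omega)$ and the bound $\norm{R^{\mathsf s}(\omega)}_{\gamma,\alpha}\le C\norm{H(\omega)}_{\gamma,\alpha}$, where $C=C_{\gamma,\theta,T,S,\mathsf s}$ does not depend on $\omega$. This uniformity holds because the domain and the splitting rule are fixed, so $\theta$ and $C_{\mathsf s}$ are the same for every $\omega$. Raising the bound to the $p$-th power and taking expectations then yields \eqref{eq:stochastic-subcritical-estimate}, and the identity $\hdd R^{\mathsf s}=H$ holds almost surely.

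The step that needs genuine care is measurability of $\omega\mapsto R^{\mathsf s}(\omega)$, which the expectation requires. I would handle it through the construction itself. On a fixed linear domain the map $\hat\Lambda^{\mathsf s}_{S,\gamma,\alpha}$ is linear and bounded from $(\D^{\mathsf s}_\gamma,\norm{\cdot}_{\gamma,\alpha})$ into $C^\gamma(\Delta_T^2;B_\alpha)$, hence continuous. The composition $\omega\mapsto\hat\Lambda^{\mathsf s}_{S,\gamma,\alpha}H(\omega)$ is therefore measurable whenever $H$ is measurable as a map into the domain with its norm topology.

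If one prefers not to rely on this abstract argument, for instance because of nonseparability of Hölder spaces, there is a more concrete route. For each dyadic pair $(s,t)$, the value $R^{\mathsf s}_{ts}(\omega)$ is obtained from $B^H_{ts}=-H_{ts0}$ by finitely many applications of the linear splitting rule and of the bounded operators $S_r$. Each such value is thus a measurable $B_\alpha$-valued random variable. The Hölder seminorm is a supremum over the countable dyadic set, because the continuous extension in Theorem~\ref{thm:subcritical-dyadic-sewing} preserves this supremum. Hence $\norm{R^{\mathsf s}}_{\gamma,\alpha}$ is measurable, which is all the $L^p$ estimate needs.

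For individual random defects that do not lie in one fixed linear domain, the same pathwise argument goes through, but I would use the individual-defect form of the estimate in Theorem~\ref{thm:subcritical-dyadic-sewing}, namely $\norm{R^{\mathsf s}(\omega)}_{\gamma,\alpha}\le C_{\gamma,\theta,T,S}\bigl(\norm{H(\omega)}_{\gamma,\alpha}+C_H(\omega)\bigr)$. This requires $\theta$ to be uniform in $\omega$, which I would take as part of the assumption. Taking $p$-th moments then gives the stated right-hand side, with measurability of $C_H(\omega)$ assumed, or replaced by an outer expectation.

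The critical case follows the same pattern. Theorem~\ref{thm:critical-dyadic-sewing}, specifically \eqref{eq:critical-log-estimate}, gives $\norm{R^{\mathsf s}(\omega)}_{1,\log;\alpha}\le C\norm{H(\omega)}_{1,\alpha}$ with $C$ independent of $\omega$. The logarithmic seminorm is again a supremum over dyadic pairs, so it is measurable. Integrating gives \eqref{eq:stochastic-critical-estimate}, and $\hdd R^{\mathsf s}=H$ holds almost surely.
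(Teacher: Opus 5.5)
Your proposal is correct and is essentially the paper's proof. Apply the deterministic estimates of Theorems~\ref{thm:subcritical-dyadic-sewing} and~\ref{thm:critical-dyadic-sewing} pathwise, with a constant independent of $\omega$ because the rule and domain are fixed, and then take $p$-th moments; your extra remarks on measurability, and on needing $\theta$ (and, at $\gamma=1$, $C_\ast,\eta$) uniform in $\omega$ for individual defects, address points the paper leaves implicit, and of your two measurability routes the one via boundedness of the linear map $\hat\Lambda^{\mathsf s}$ is the clean one, since the concrete dyadic route tacitly also needs the splitting rule itself to act measurably.
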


\begin{proof}
The deterministic maps in Theorems~\ref{thm:subcritical-dyadic-sewing} and~\ref{thm:critical-dyadic-sewing} are linear and continuous once the splitting rule is fixed.  Applying the deterministic estimates pathwise and integrating gives \eqref{eq:stochastic-subcritical-estimate} and \eqref{eq:stochastic-critical-estimate}.  The identities $\hdd R^{\mathsf s}=H$ hold pathwise and therefore almost surely.
\end{proof}

The pathwise construction is also stable under approximation of the random defect.

\begin{corollary}
\label{cor:stochastic-stability}
Let $0<\gamma<1$ and suppose
\[
H^n,H\in L^p\big(\Omega; \D^{\mathsf s}_{\gamma}(\Delta_T^3;B_\alpha)\big), \quad \norm{H^n-H}_{L^p\big(\Omega;C^{\gamma}(\Delta_T^3;B_\alpha)\big)}\to0,
\]
Then
\[
\norm{\hat\Lambda^{\mathsf s}_{S,\gamma,\alpha}H^n-
\hat\Lambda^{\mathsf s}_{S,\gamma,\alpha}H}_{L^p\big(\Omega;C^{\gamma}(\Delta_T^2;B_\alpha)\big)}\to0.
\]
At the endpoint $\gamma=1$, convergence in $L^p\big(\Omega;C^{1}(\Delta_T^3;B_\alpha)\big)$ and convergence of the splitting constants imply convergence in $L^p\big(\Omega; C_{\log}^{1}(\Delta_T^2;B_\alpha)\big)$.
\end{corollary}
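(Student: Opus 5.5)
The plan is to reduce the statement to linearity plus the deterministic estimate of Theorem~\ref{thm:subcritical-dyadic-sewing}, applied pathwise. On the fixed admissible selection domain $\D^{\mathsf s}_{\gamma}(\Delta_T^3;B_\alpha)$, the map $\hat\Lambda^{\mathsf s}_{S,\gamma,\alpha}$ is linear. The domain is a linear subspace and the splitting rule $\mathsf s$ is linear, so $H^n(\omega)-H(\omega)\in\D^{\mathsf s}_{\gamma}(\Delta_T^3;B_\alpha)$ for every $\omega$. It follows that
\[
\hat\Lambda^{\mathsf s}_{S,\gamma,\alpha}H^n(\omega)-\hat\Lambda^{\mathsf s}_{S,\gamma,\alpha}H(\omega)=\hat\Lambda^{\mathsf s}_{S,\gamma,\alpha}\bigl(H^n(\omega)-H(\omega)\bigr).
\]
The first step is to check this identity carefully: the recursion \eqref{eq:midpoint-path-scale} and the primitive $H\mapsto B^H$ are linear, and so is the continuous extension. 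Hence the residuals $R^{\mathsf s}$ depend linearly on $H$.

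Next, I apply \eqref{eq:subcritical-estimate} to the difference for each fixed $\omega$. This gives the pointwise bound
\[
\norm{\hat\Lambda^{\mathsf s}_{S,\gamma,\alpha}H^n(\omega)-\hat\Lambda^{\mathsf s}_{S,\gamma,\alpha}H(\omega)}_{\gamma,\alpha}\le C_{\gamma,\theta,T,S,\mathsf s}\norm{H^n(\omega)-H(\omega)}_{\gamma,\alpha}.
\]
The constant is uniform in $\omega$ because the domain bound $C_H\le C_{\mathsf s}\norm{H}_{\gamma,\alpha}$ is part of Definition~\ref{def:admissible-selection-domain}. Raising to the power $p$ and taking expectations, as in the proof of Theorem~\ref{thm:stochastic-sewing-estimates}, gives the $L^p$ bound. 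The right-hand side tends to zero by hypothesis. Measurability of $\omega\mapsto\hat\Lambda^{\mathsf s}_{S,\gamma,\alpha}H(\omega)$ follows from its continuity as a map on the domain composed with the measurable $H$.

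For the endpoint $\gamma=1$, the same argument works with \eqref{eq:critical-log-estimate}, \emph{provided} the differences stay inside a fixed critical domain. The stated hypothesis, convergence of the splitting constants, suggests that one instead uses the individual-defect form of Theorem~\ref{thm:critical-dyadic-sewing}. I would first try to write the difference of the two dyadic recursions as the recursion driven by $H^n-H$. The difference residuals then satisfy \eqref{eq:critical-scale-splitting-estimate} with constant $C_{H^n}-C_H$, or more precisely a difference splitting constant that tends to zero. This yields
\[
\norm{R^{\mathsf s}(H^n)-R^{\mathsf s}(H)}_{1,\log;\alpha}\le C\bigl(\norm{H^n-H}_{1,\alpha}+C_{H^n-H}\bigr)
\]
pathwise, and then integrating gives convergence in $L^p\bigl(\Omega;C^1_{\log}(\Delta_T^2;B_\alpha)\bigr)$.

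The main obstacle is this last point: without linearity on a common domain, one must make sense of the splitting constant of the difference. I would interpret the hypothesis as $\mathbb E[C_{H^n-H}^p]\to0$ and rely on the discrete Gronwall step for the difference recursion.
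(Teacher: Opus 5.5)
Your proposal is correct and follows the same route as the paper's proof. On the fixed linear domain you use linearity of $\hat\Lambda^{\mathsf s}_{S,\gamma,\alpha}$ together with the deterministic bound applied to $H^n-H$ pathwise, then integrate in $\omega$; at the endpoint you use the individual-defect bound, assuming that the splitting constants of the differences tend to zero in $L^p(\Omega)$, which matches the paper's sketch. One correction: drop the phrase ``constant $C_{H^n}-C_H$'', since upper bounds cannot be subtracted. Keep your corrected $C_{H^n-H}$, which relies on the linearity of the rule $\mathsf s$ so that $R^{\mathsf s}(H^n)-R^{\mathsf s}(H)=R^{\mathsf s}(H^n-H)$.
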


\begin{proof}
The claim follows from the continuity and linearity of the deterministic Sewing map on the fixed admissible domain.  In an individual-defect formulation, the same conclusion follows if the corresponding individual constants, for instance parabolic tail constants in the heat case, converge in $L^p(\Omega)$.
\end{proof}

\begin{remark}
The preceding results do not introduce a separate stochastic Sewing mechanism. The construction is applied pathwise to $H(\omega)$.  Stochastic estimates are
used only to verify the admissibility assumptions and the required $L^p(\Omega)$ bounds on the defect and, in the heat case, on the residual tail constant.
\end{remark}

\vskip 0.1in

\noindent
{\bf Acknowledgments.} This work is supported by the National Natural Science Foundation of China (12571019), the Natural Science Foundation of Gansu Province (25JRRA644) and Innovative Fundamental Research Group Project of Gansu Province (23JRRA684).

\noindent
{\bf Declaration of interests. } The authors have no conflicts of interest to disclose.

\noindent
{\bf Data availability. } Data sharing is not applicable as no new data were created or analyzed.

\medskip 


\end{document}